\newcommand{\comment}[1]{}
\newcommand{\ind}{{\bf 1}}
\def\inddd#1{1_{\left\{#1\right\}}}
\newcommand{\proba}{\mathbb P}
\newcommand{\esp}{{\mathbb E}}
\newcommand{\inv}{^{-1}}
\newcommand{\eqnh}{\begin{eqnarray*}}
\newcommand{\eqne}{\end{eqnarray*}}
\newcommand{\eqnhn}{\begin{eqnarray}}
\newcommand{\eqnen}{\end{eqnarray}}
\newcommand{\equh}{\begin{equation}}
\newcommand{\eque}{\end{equation}}
\def\summ#1#2#3{\sum_{#1 = #2}^{#3}}
\def\prodd#1#2#3{\prod_{#1 = #2}^{#3}}
\def\sif#1#2{\sum_{#1=#2}^\infty}
\newcommand{\eqd}{\stackrel{d}{=}}
\def\topp#1{^{(#1)}}
\def\ccbb#1{\left\{#1\right\}}
\def\sccbb#1{\{#1\}}
\def\pp#1{\left(#1\right)}
\def\bb#1{\left[#1\right]}
\def\sbb#1{[#1]}
\def\mmid{\;\middle\vert\;}
\def\floor#1{\left\lfloor #1 \right\rfloor}
\def\vv#1{{\boldsymbol #1}}
\def\qmand{\quad\mbox{ and }\quad}
\def\mwith{\mbox{ with }}
\def\qmwith{\quad\mbox{ with }\quad}
\def\mfa{\mbox{ for all }}
\def\mmas{\mbox{ as }}
\def\wt#1{\widetilde{#1}}
\def\what#1{\widehat{#1}}
\def\limn{\lim_{n\to\infty}}
\def\limm{\lim_{m\to\infty}}
\def\limsupn{\limsup_{n\to\infty}}
\def\weakto{\Rightarrow}
\def\R{{\mathbb R}}
\def\N{{\mathbb N}}
\def\Q{{\mathbb Q}}
\def\T{{\mathbb T}}
\def\calB{\mathcal B}
\def\calD{\mathcal D}
\def\calE{\mathcal E}
\def\filF{\mathcal F}
\def\calG{\mathcal G}
\def\calI{\mathcal I}
\def\calK{\mathcal K}
\def\calH{\mathcal H}
\def\calM{\mathcal M}
\def\calN{\mathcal N}
\def\vF{{\mathbf F}}
\def\Zab{Z_{\alpha,\beta,p}}
\newtheorem{Thm}{Theorem}[section]
\newtheorem{Lem}[Thm]{Lemma}
\newtheorem{Cor}[Thm]{Corollary}
\newtheorem{Pro}[Thm]{Proposition}
\theoremstyle{definition}
\newtheorem{Def}{Definition}[section]
\theoremstyle{remark}
\newtheorem{Rem}{Remark}[section]
\theoremstyle{remark}
\newtheorem{Eg}{Example}[section]
\newtheorem{assump}{Assumption}
\newcommand{\EqD}{\overset{d}{=}}
\newcommand{\ConvFDD}{\overset{f.d.d.}{\longrightarrow}}
\newcommand{\cl}{\mathcal}
\newcommand{\lf}{\lfloor}
\newcommand{\rf}{\rfloor}
\newcommand{\wh}{\widehat}
\def\supp{\mathrm{supp}}
\newcommand{\RV}{\mathrm{RV}}
\def\beqn{\begin{equation}}
\def\beqn*{$$}
\def\eeqn{\end{equation}}
\def\E{\mathbb{E}}
\newcommand{\That}{\widehat T}
\newcommand{\hqbeta}{h_q^{(\beta)}}
\newcommand{\hqbetaeta}{h_q^{(\beta-\eta)}}
\def\I{\mathcal I}
\newcommand{\Ii}{\mathcal I(i)}
\newcommand{\abIi}{|\mathcal I(i)|}
\newcommand{\hIibeta}{h_{|\I (i)|}^{(\beta)}}
\newcommand{\xIi}{\vv x_{\I(i)}}
\newcommand{\Kil}{\mathcal K(i,\ell)}
\newcommand{\Gammabeta}{\Gamma (\beta) \Gamma (2-\beta)}
\journal{Stochastic Processes and their Applications}
\begin{document}

\begin{frontmatter}



\title{A functional non-central limit theorem for multiple-stable 
processes with long-range dependence}


\author[label1]{Shuyang Bai}

\author[label2]{Takashi Owada}

\author[label3]{Yizao Wang}

\address[label1]{Department of Statistics, University of Georgia, 310 Herty Drive, 
Athens, GA, 30602, USA. {bsy9142@uga.edu}}

\address[label2]{Department of Statistics, Purdue University, 250 N.~University Street, 
West Lafayette, IN, 47907, USA. {owada@purdue.edu}}

\address[label3]{Department of Mathematical Sciences, University of Cincinnati, 2815 Commons Way, Cincinnati, OH, 45221-0025, USA. {yizao.wang@uc.edu}}

\begin{abstract}
A functional limit theorem is established for the partial-sum process of a class of stationary sequences which exhibit both heavy tails and long-range dependence. The stationary sequence is constructed using multiple stochastic integrals with heavy-tailed marginal distribution. Furthermore, the multiple stochastic integrals are built upon a large family of dynamical systems that are ergodic and conservative, leading to the long-range dependence phenomenon of the model. The limits constitute a new class of self-similar processes with stationary increments. They are represented by multiple stable integrals, where the integrands involve the local times of intersections of independent stationary stable regenerative sets. 
\end{abstract}

%

\begin{keyword}

multiple integral \sep stable regenerative set \sep local time \sep heavy-tailed distribution \sep functional limit theorem \sep long-range dependence \sep infinite ergodic theory 
 
\MSC 60F17  \sep 60G18 \sep 60H05




\end{keyword}

\end{frontmatter}


\section{Introduction}
\subsection{Background}
The seminal work of Rosi\'nski \citep{rosinski95structure}
revealed an intriguing connection between stationary stable processes and ergodic theory. 
Consider a stationary process in the form of
\equh\label{eq:1}
X_k = \int_E f(T^k x)M(dx), \ \ k\in\N,
\eque
where $M$ is symmetric $\alpha$-stable random measure 
 on a  measure space $(E,\calE,\mu)$, $f:E\to \R$ is a measurable function and $T$ is a measure-preserving transform from $E$ to $E$. Then, many   properties of the process $X$ can be derived from the underlying dynamical system $(E,\calE,\mu,T)$. 
 Because of this connection, the process $X$ is also referred to as {\em driven by the flow $T$}, and many developments on structures, representations, and ergodic properties of such processes have stemmed from this connection (see e.g., \citep{samorodnitsky16stochastic,samorodnitsky05null,kabluchko16stochastic,pipiras02structure,pipiras02decomposition,pipiras17stable,sarkar18stable,roy08stationary,roy10nonsingular,wang13ergodic};
  background to be reviewed in Section \ref{sec:ergodic}).  
In particular, it was argued by Samorodnitsky \citep[Remark 2.5]{samorodnitsky05null} that the case where $T$ is conservative and ergodic is the most challenging to develop a satisfactory characterization of the ergodic properties 
of the process
 in terms of the 
 underlying 
 dynamical system.

While examples of stable processes driven by conservative and ergodic flows  have been known for more than 20 years since \citep{rosinski96classes}, limit theorems for such processes have not been established until in very recent breakthroughs in a series of papers by Samorodnitsky and coauthors \citep{owada15functional,owada15maxima,lacaux16time,samorodnitsky17extremal}, all exhibiting phenomena of long-range dependence with new limit objects. Here, by long-range dependence, we mean generally that the partial-sum process $(S_{\floor {nt}})_{t\in[0,1]}$, with $S_n:=X_1+\cdots+X_n$, scales to a non-degenerate stochastic process with a normalization that is different from the case when $(X_k)_{k\in\N}$ are i.i.d. We follow this point of view as in Samorodnitsky \citep{samorodnitsky16stochastic}, and one could also consider limit theorems for other statistics; the key is always the abnormal normalization compared to the i.i.d.~case. 

The functional central limit theorem for stationary stable processes driven by a conservative and ergodic flow, established in  \citep{owada15functional}, serves as our starting point and takes the following form.
With $f$ in \eqref{eq:1} such that the support has finite $\mu$-measure and $\mu(f):=\int_Efd\mu$ is finite and nonzero, it was shown that
\equh\label{eq:OS}
\frac1{d_n}\pp{S_{\floor{nt}}}_{t\in[0,1]}
\weakto \mu(f)\pp{\int_{\Omega'\times [0,\infty)}\calM_{\beta}((t-v)_+,\omega')S_{\alpha,\beta}(d\omega',dv)}_{t\in[0,1]}
\eque
in $D([0,1])$, 
 where $\alpha\in(0,2)$, $\beta \in (0,1)$, and $d_n$ is a regularly varying sequence with exponent ${\beta+(1-\beta)/\alpha}$. (This was actually established in a slightly more general framework with $M$ replaced by an infinitely-divisible random measure with heavy-tail index $\alpha$.)
Here, 
$(\Omega',\filF',P')$ is a probability space separate from the one that carries the randomness of the stochastic integral itself, 
$S_{\alpha,\beta}$ is a symmetric $\alpha$-stable (S$\alpha$S) random measure on $\Omega' \times [0,\infty)$ with control measure $P' \times (1-\beta)v^{-\beta}dv$, 
 and $\calM_\beta$ is the Mittag--Leffler process with index $\beta$, the inverse process of a $\beta$-stable subordinator, defined on $(\Omega',\filF',P')$.

Here, $\beta\in(0,1)$ is the memory parameter of an underlying dynamical system (see Section \ref{sec:nCLT} and in particular how $\beta$ characterizes the memory of $T$ in terms of Assumption \ref{assump}), and as $\beta\downarrow0$ the limit process in \eqref{eq:OS} becomes an S$\alpha$S L\'evy process. At the core of this result, the appearance of the Mittag--Leffler process is established as a functional generalization of the one-dimensional Darling--Kac limit theorem  in 
\citep{aaronson81asymptotic,bingham71limit} for the underlying dynamical system, which is of independent interest in ergodic theory. Later developments \citep{lacaux16time,samorodnitsky17extremal} revealed that  more essentially, stable regenerative sets \citep{bertoin99subordinators} and their intersections play a fundamental role in describing the limit objects for a large family of processes driven by conservative and ergodic flows.

In this paper, as a generalization of \eqref{eq:1} we consider the process defined in terms of multiple stochastic integrals in the form of 
\equh\label{eq:2}
X_k = \int_{E^p}'f(T^kx_1,\dots,T^kx_p)M(dx_1)
\cdots
 M(dx_p), \ \ k\in\N, \ \ p\in\N,
\eque
where the prime mark $'$ indicates that the multiple integral is defined to {\em exclude the diagonals}, and this time $f$ is a measurable function from $E^p$ to $\R$. The definition of multiple stochastic integrals will be recalled  in Section \ref{sec:mult int} below. 

  We restrict to the case of multiple integrals without the diagonals, in order to obtain limit  processes in the form of  {\em multiple stable integrals}, which we refer to as {\em multiple-stable processes}. 
Since the seminal works of Dobrushin and Major \citep{dobrushin79noncentral} and Taqqu \citep{taqqu79convergence},   the processes in the form of multiple Gaussian integrals 
have frequently appeared in limit theorems   under long-range dependence. For example, they were obtained as limits for partial sums   \citep{dobrushin79noncentral,taqqu79convergence, surgailis82domains,arcones1994limit,ho97limit,bai14generalized}), for empirical processes \citep{dehling1989empirical,ho1996asymptotic,wu2003empirical}  as well as for quadratic forms \citep{fox1985noncentral,terrin1990noncentral}.
  Such limit theorems  are often referred to as {\em non-central limit theorems} and have found numerous applications to statistical theories for long-range dependent data (see, e.g., \cite{beran13long} and the references therein).    Limit theorems with (non-Gaussian) multiple-stable processes as limits, to the best of our knowledge however, have been rarely considered so far in the literature of long-range dependence.
Note that the exclusion of the diagonals is necessary to obtain 
multiple-stable processes with multiplicity $p\ge 2$:
 with the terms on the diagonal included, the case $p=2$ has been partly considered in \citep{owada16limit}, and the limit is again a stable process.

\subsection{Overview of main results}

Our ultimate goal (Theorem \ref{Thm:CLT}) is to establish formally that 
\[
\frac1{d_n}\pp{\summ k1{\floor{nt}}X_k}_{t\in[0,1]}\weakto \pp{\Zab(t)}_{t\in[0,1]}
\]
for a large family of $(X_k)$ in \eqref{eq:2}, and the limit process has the representation
\begin{multline}\label{eq:Z}
\pp{\Zab(t)}_{t\ge 0}\\
 \eqd \pp{\int_{(\vF\times
[0,\infty)
)^p}'L_t\pp{\bigcap_{i=1}^p(R_i+v_i)}S_{\alpha,\beta}(d R_1,dv_1)\cdots S_{\alpha,\beta}(dR_p,dv_p)}_{t\ge 0},
\end{multline}
where $S_{\alpha,\beta}$ is an S$\alpha$S random measure on $\vF\times [0,\infty)$, with control measure $P_\beta\times (1-\beta)v^{-\beta}dv$, with $P_\beta$ the probability measure on $\vF\equiv \vF([0,\infty))$, the space of closed subsets of $[0,\infty)$,  induced by the law of a  $\beta$-stable regenerative set, and $L_t$ is the local-time functional for a $(p\beta-p+1)$-stable regenerative set 
\citep{kingman73intrinsic}.  

An immediate observation is that for the right-hand side of \eqref{eq:Z} to be non-degenerate, we need $\bigcap_{i=1}^p(R_i+v_i)$ to be non-empty, with $(R_i)_{i=1,\dots,p}$ being i.i.d.~$\beta$-stable regenerative sets. The key relation between the memory parameter $\beta$ and the multiplicity $p$ assumed  throughout this paper is that
\equh\label{eq:beta}
\beta\in(0,1), \quad p\in\N \quad \mbox{ such that }\quad \beta_p:=p\beta-p+1 \in(0,1),
\eque
or equivalently $\beta\in(1-1/p,1)$. It is known (e.g., \citep{samorodnitsky17extremal})  that this is exactly the case when $\bigcap_{i=1}^p(R_i+v_i)$ is a $\beta_p$-stable regenerative set with a random shift with probability one. 
 When \eqref{eq:beta} is violated and $v_i$ are all different,
  the intersection becomes an empty set with probability one  and hence $\Zab$ becomes degenerate. The  limit  theorem in such a case will be of a different nature and  addressed in a separate paper.

Our theorem applies to a large family of dynamical systems, including in particular the  shift transforms of certain null-recurrent Markov chains, and a class of transforms on the real line called  the AFN-systems \citep{zweimuller98ergodic,zweimuller00ergodic}
often considered in the literature of infinite ergodic theory.
Establishing the aforementioned convergence, however,  turns out to be a completely different task from the one in
\citep{owada15functional},
and the proof consists of two parts.
The first part  is devoted to the investigation of the integrand of the right-hand side of \eqref{eq:Z}, which are local-time processes of intersections of stable regenerative sets (Section \ref{sec:local times}).   Let $(R_i)_{i\in\N}$ be i.i.d.~$\beta$-stable regenerative sets. To exploit a series representation of the multiple integral \eqref{eq:Z} (see \eqref{eq:Z_t [0,1]} below),   we need to characterize the law of 
\[
L_{I,t} \equiv L_t\pp{\bigcap_{i\in I}(R_i+v_i)} \mfa I\subset \N,~ |I| = p,~t\ge 0,
\]
jointly in $I$ and $t$, governed by certain law on the shifts $(v_i)_{i\in I}$ independent from the regenerative sets.
Marginally, for each $I$, $(L_{I,t})_{t\ge 0}$ has the law of a   Mittag--Leffler process shifted in time with parameter $\beta_{p}$, up to a multiplicative constant \citep{samorodnitsky17extremal}. 
In particular when $p=1$ we have
\equh\label{eq:p=1}
\pp{L_t(R_1+v_1)}_{t\ge 0} \eqd c_\beta\pp{\calM_{\beta}((t-v_1)_+)}_{t\ge 0}
\eque
for some constant $c_\beta$. It is then a matter of convenience to work with either of the two representations in \eqref{eq:p=1}, and the right-hand side was used in    \citep{owada15functional}.  
However when $p\ge 2$, the information from the Mittag--Leffler process is only marginal, whereas  we need to work with $L_{I,t}$ jointly in $I,t$. More precisely, 
we shall compute all their joint moments with appropriately randomized shifts. For this key calculation, we adapt the {\em random covering scheme} for  constructing regenerative sets \citep{fitzsimmons85intersections}, to develop approximations of joint law of $L_{I,t}$   in Theorem \ref{thm:1}.

The second part of the proof is devoted to the convergence of the partial-sum process to $\Zab$.  To illustrate the idea, assume for simplicity that   $f(x_1,\ldots,x_p)=1_A(x_1)\ldots 1_A(x_p)$, where $A$ is a suitable finite-measure subset of $E$.  To work with a series representation of  the multiple integral \eqref{eq:2} (see \eqref{eq:X_k series} below), the key ingredient is to show  the joint convergence after proper normalization,  in $I$ and $t$, of counting processes of simultaneous returns of i.i.d.~dynamical systems, indexed by $i\in I$, in the form 
\equh\label{eq:key}
\sum_{k=1}^{\floor{nt}}\prod_{i\in I}\inddd{T^kx_i\in A},
\eque
where the staring points $x_i\in E$ are governed by i.i.d.\  infinite stationary distributions. For any individual $I$, 
our assumptions essentially entail that the simultaneous-return times behave like renewal times of a heavy-tailed renewal process, and then the above is known to converge to the local-time process $L_{I,t}(R^*+V^*)$ for $\beta_p$-stable regenerative set $R^*$ with a random shift $V^*$. This certainly includes $p=1$ as a special case (\citep{bingham71limit} and 
\citep[Theorem 6.1]{owada15functional}). The challenge lies in characterizing the joint limits for say $(I_j,t_j)_{j=1,\dots,r}$. Theorem \ref{Thm:local time} is devoted to this task, showing that the limit of the above is  
$(L_{I_j,t_j})_{j=1,\dots,r}$
 (with respect to random shifts 
 $v_j$). The proof is of combinatorial nature and by computing the asymptotic moments of \eqref{eq:key}. A delicate approximation scheme similar to Krickeberg \citep{krickeberg67strong} is then developed so that the asymptotic moment formula is  extended  to the case where the product in \eqref{eq:key} is replaced by $f(T^kx_1,\dots,T^k x_{p})$ for a general class of functions of $f$.

 We also mention that  a simultaneous work \citep{bai20limit} considers the case  where  the random measure $M$ in \eqref{eq:2} is replaced by a Gaussian one  so that $X_k$ has  finite variance marginally. In that case,   a functional non-central limit theorem is established with  Hermite processes (e.g., \cite{taqqu79convergence}), a well-known class of processes    represented by multiple Gaussian integrals,  arising as limits. It is remarkable that the proof techniques of \citep{bai20limit} exploit special properties of multiple Gaussian integrals,
and in particular, the local-time processes and their approximations as we deal with here are not needed in \citep{bai20limit}.
On the other hand, however, the joint local-time processes are still intrinsically connected to the limit Hermite processes.
As shown in the manuscript \cite{bai19representations} after the present work, if the multiple-stable integrals in \eqref{eq:Z} are extended
to the Gaussian case $\alpha=2$, then they yield   new representations for the Hermite processes.

 The paper is organized as follows. Section \ref{sec:local times} introduces the joint local-time processes, and 
 establishes
  a formula for the joint moments by the random covering 
 scheme. Section \ref{sec:limit process} reviews certain series representations of multiple integrals and defines formally the limit process $\Zab$. Section \ref{sec:nCLT} introduces our model of stationary processes in terms of multiple integrals with long-range dependence, and states the main non-central limit theorem. Section \ref{sec:proof} is devoted to the proof of the main theorem. Throughout the paper, $C$ and $C_i$  denote     generic positive constants which are independent of $n$ and may change from line to line.

\section{Local-time processes}\label{sec:local times}
\subsection{Definitions and results}
We start by recalling some facts about random closed sets on $[0,\infty)$, and in particular, stable regenerative sets.  We refer the reader to \citep{molchanov17theory} for more details. Let $\vF \equiv \mathbf{F}([0,\infty))$ denote the collection of all closed subsets of $[0,\infty)$. We equip $\vF$ with the Fell topology  which is  generated by the sets $\{F\in \vF: F\cap G \neq \emptyset\}$ and $\{F\in \vF: \ F\cap K=\emptyset\}$ for arbitrary open $G\subset [0,\infty)$ and compact $K\subset [0,\infty)$.    A  random closed set on $[0,\infty)$ is a Borel measurable  random element taking values in $\vF$.  If the law of a random closed set $R$ on $[0,\infty)$ is identical to that of the closed range of  a subordinator \citep{bertoin99subordinators}, then $R$ is said to be a regenerative set. The random set $R$ is, in addition, said to be $\beta$-stable, $\beta\in(0,1)$, if the corresponding subordinator, say $(\sigma_t)_{t\ge0}$, is  
  $\beta$-stable; that is, $(\sigma_t)_{t\ge0}$ is a non-decreasing L\'evy process determined by 
\begin{equation}\label{eq:laplace}
\E e^{-\lambda \sigma_t}=\exp(-  t\lambda^\beta), \lambda\ge0.
\end{equation}
In this case, the associated L\'evy measure of the regenerative set $R$ is
\begin{equation}\label{eq:Pi_beta}
\Pi_\beta(dx)=\frac{ \beta}{\Gamma(1-\beta)}x^{-1-\beta} 1_{(0,\infty)}(x) dx,
\end{equation}
which characterizes the law of $R$.

For our purposes, we shall work with a family of countably many independent stable regenerative sets with independent shifts, and we need in particular to describe their intersections. Let $(R_i)_{i\in\N}$ be i.i.d.~$\beta$-stable regenerative sets and $(V_i)_{i\in\N}$ be independent random shifts with arbitrary laws, and the two sequences are independent. 
Under our assumption on $\beta$ and $p$ in \eqref{eq:beta},  for every 
\begin{equation}\label{eq:D_p}
I\in \calD_p:=\ccbb{I=(i_1,\ldots,i_p)\in\N^p:~i_1<
\cdots<i_p},
\end{equation}
we have
\begin{equation}\label{eq:decomp intersect}
\bigcap_{i\in I}(R_i+V_i) \eqd R_I+V_I  ,
\end{equation}
where $R_I$ is a $\beta_p$-stable regenerative set and $V_I$ is an independent random variable. In words, the intersection of $p$ independent randomly shifted $\beta$-stable regenerative set is $\beta_p$-stable regenerative with an independent random shift. This follows for example from the strong Markov property of the regenerative sets. See also \citep[Appendix B]{samorodnitsky17extremal}.

There are multiple ways to construct the local time associated to a regenerative set (\citep[Chapter 12]{kallenberg17random}). 
For the series representation of multiple integrals needed later, 
we use a construction due to Kingman \citep{kingman73intrinsic} which treats the local time as a functional defined on $\mathbf{F}$. In particular, set
\[
L=L^{(\beta_p)}: \mathbf{F}\rightarrow[0,\infty], ~ L(F) :=\limsupn\frac1{l_{\beta_p}(n)}\lambda\left(F+\bb{- \frac{1}{2n},\frac{1}{2n}}\right),
\]
where $\lambda$ is the Lebesgue measure, 
$F+\sbb{- 1/{2n},1/{2n}}\equiv\cup_{x\in F}[x-1/{2n},x+1/{2n}]$,
 and the   normalization sequence
\[
l_{\beta_p}(n)=\int_{0}^{1/n} \Pi_{\beta_p}((x,\infty))dx= \frac{   n^{\beta_p-1}}{\Gamma(2-\beta_p)},
\]
where $\Pi_\beta$ is as in \eqref{eq:Pi_beta}. 
The exclusive choice of $\beta_p$ as in \eqref{eq:beta} is due to the fact that we shall only deal with local times of shifted $\beta_p$-stable regenerative sets, obtained as the intersection of $p$ independent stable regenerative sets. 
We then define 
\begin{equation}\label{eq:L_t}
L_t(F):=L(F\cap [0,t]), \quad t\ge 0.
\end{equation}
\begin{Lem}
The functionals $L$ and $L_t$ are $\calB(\vF)/\cl{B}( [0,\infty])$-measurable, where $\calB(\vF)$ and $\cl{B}( [0,\infty])$ denote the Borel $\sigma$-fields on $\vF$ and $  [0,\infty]$ respectively.
\end{Lem}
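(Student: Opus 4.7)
The plan is to establish measurability in three steps: first for the map $F\mapsto \lambda(F+[-\epsilon,\epsilon])$ at each fixed $\epsilon>0$, then for $L$ via a countable limsup, and finally for $L_t$ via composition with a truncation map.

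First, I would verify that for each $\epsilon>0$ the map $\vF\ni F\mapsto \lambda(F+[-\epsilon,\epsilon])$ is Borel measurable into $[0,\infty]$. The key identity is $F+[-\epsilon,\epsilon]=\{x\in\R:\,d(x,F)\le\epsilon\}$, which gives
\[
\lambda\pp{F+[-\epsilon,\epsilon]}=\int_\R 1_{\{d(x,F)\le \epsilon\}}\,dx,
\]
so by Tonelli it suffices to prove joint Borel measurability of $(F,x)\mapsto d(x,F)$ on $\vF\times[0,\infty)$. For fixed $x$, the slice $F\mapsto d(x,F)$ is upper semicontinuous for the Fell topology, because $\{F:\,d(x,F)<r\}=\{F:\,F\cap B(x,r)\ne\emptyset\}$ is Fell-open for every open ball $B(x,r)\subset [0,\infty)$. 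For fixed $F$, the slice $x\mapsto d(x,F)$ is $1$-Lipschitz continuous. Since $\vF$ with the Fell topology is compact metrizable (hence separable), separate measurability combined with continuity in one argument yields joint Borel measurability by the usual approximation of $x\mapsto d(x,F)$ by simple functions supported on a countable dense subset of $[0,\infty)$.

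Second, inserting $\epsilon=1/(2n)$ and normalizing,
\[
L(F)=\limsupn\frac{1}{l_{\beta_p}(n)}\lambda\pp{F+\bb{-\frac{1}{2n},\frac{1}{2n}}}
\]
is a countable limsup of Borel measurable functions, hence itself Borel measurable from $\vF$ into $[0,\infty]$.

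For $L_t$, I would show that the truncation map $\psi_t:\vF\to\vF$, $\psi_t(F):=F\cap[0,t]$, is Borel measurable; then $L_t=L\circ\psi_t$ is Borel measurable as a composition. It suffices to check preimages of the two families of subbasic Fell-open sets. For open $G\subset[0,\infty)$,
\[
\psi_t^{-1}\pp{\ccbb{A:\,A\cap G\ne\emptyset}}=\ccbb{F:\,F\cap(G\cap[0,t))\ne\emptyset}\cup\ccbb{F:\,t\in F\cap G},
\]
which is Borel: the first piece is Fell-open since $G\cap[0,t)$ is open in $[0,\infty)$, while $\{F:\,t\in F\}$ is Fell-closed as the complement of the Fell-open set $\{F:\,F\cap\{t\}=\emptyset\}$. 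For compact $K\subset[0,\infty)$, $K\cap[0,t]$ is compact, so
\[
\psi_t^{-1}\pp{\ccbb{A:\,A\cap K=\emptyset}}=\ccbb{F:\,F\cap(K\cap[0,t])=\emptyset}
\]
is Fell-open. This covers a subbase generating $\calB(\vF)$, hence $\psi_t$ is Borel measurable. The only mildly delicate points in this plan are the joint measurability of $(F,x)\mapsto d(x,F)$ and the boundary case $t\in G$ when analyzing $\psi_t$; the rest is routine.
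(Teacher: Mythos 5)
Your proof is correct, but it takes a genuinely different route from the paper's. The paper disposes of the lemma in three cited steps: Minkowski sum with a fixed compact interval and intersection with a fixed closed set are measurable operations on $\vF$ (Molchanov, Theorem 1.3.25), and the Lebesgue measure functional is measurable on $\vF$ because $F\mapsto\lambda(F\cap K_n)$ is upper semicontinuous for each compact $K_n=[n,n+1]$ (Molchanov, Proposition E.13) and $\lambda(F)=\sum_n\lambda(F\cap K_n)$; then $L$ and $L_t$ are measurable as limsups and compositions of these maps. You instead give a self-contained argument: you handle the dilated-set volume $F\mapsto\lambda(F+[-\epsilon,\epsilon])$ in one shot via the identity $F+[-\epsilon,\epsilon]=\{x:d(x,F)\le\epsilon\}$, joint measurability of $(F,x)\mapsto d(x,F)$ (upper semicontinuity in $F$ from the Fell hitting sets, Lipschitz continuity in $x$, and the standard Carath\'eodory-type approximation) and Tonelli, thereby never needing measurability of $\lambda$ itself or of the Minkowski sum as separate facts; and you prove measurability of the truncation $F\mapsto F\cap[0,t]$ directly by checking preimages of the subbasic Fell sets, with the correct little case analysis at the boundary point $t$ (the set $\{F:t\in F\}$ being Fell-closed). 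What the paper's route buys is brevity and reliance on standard random-set references; what yours buys is a proof from first principles requiring only the definition of the Fell topology, at the cost of carrying out the joint-measurability and subbase arguments explicitly (your aside that $\vF$ is compact metrizable is true but not actually needed, since the approximation is performed in the $x$-variable, and your integration over $\R$ rather than $[0,\infty)$ is a harmless extension of the joint-measurability claim). Both arguments are sound.
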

\begin{proof}
 Direct sum  and intersection   are measurable operations for closed sets   \citep[Theorem 1.3.25]{molchanov17theory}.   
 The Lebesgue measure $\lambda$ is also a measurable functional from  $\mathbf{F}$ to 
 $[0,\infty]$.
  Indeed,  write $[0,\infty)=\cup_{n=0}^\infty K_n$ where $K_n=[n,n+1]$.  Then $F\mapsto \lambda (F\cap K_n)$ is a measurable mapping from $\mathbf{F}$ to $[0,\infty]$ since it is upper semi-continuous \citep[Proposition E.13]{molchanov17theory}. Hence   $F\rightarrow \lambda({F})=
  \sif n0
   \lambda(F\cap K_n)$ is measurable as well.
\end{proof}
From now on,  we denote the local-time processes using the notation
\equh\label{eq:LIt}
L_{I,t} \equiv L_t\pp{\bigcap_{i\in I}(R_i+V_i)},~ t \in[0,\infty), ~I\in \calD_p. 
\eque
In view of \eqref{eq:decomp intersect} and \cite[Theorem 3]{kingman73intrinsic} (conditioning on $V_I$ in \eqref{eq:decomp intersect}), for each $I\in\calD_p$, the finite-dimensional distributions of $(L_{I,t})_{t\ge 0}$   coincide  with those of a randomly shifted $\beta_p$-Mittag--Leffler process, $(\calM_{\beta_p}(t-V_I)_+)_{t\ge 0}$, where   $V_I$ is independent of $\calM_{\beta_p}$. In particular,  $(L_{I,t})_{t\ge 0}$ admits a version which has a non-decreasing and continuous path a.s..

The advantage of the above construction is that now for different $I,t$, the corresponding local times are constructed on a common probability space as  measurable functions  evaluated at intersections of independent shifted random regenerative sets.
We shall develop the formula for their joint moments. We   work with a specific choice of the random shifts: most of the time we assume in addition that  $(V_i)_{i\in\N}$ are i.i.d.~with the law 
\equh\label{eq:V}
P(V_i\le v) = v^{1-\beta}, v\in[0,1].
\eque

\begin{Rem}\label{rem:stationary} The law of the shift \eqref{eq:V} will show up naturally in our limit theorem later.  To understand the origin of  \eqref{eq:V}, recall that a random closed set $F$ on $[0,\infty)$ is said to be stationary, if   its law is unchanged under the map  $F\rightarrow (F\cap [x,\infty))-x$ for any $x>0$. While a $\beta$-stable regenerative set $R_i$ itself is not stationary, it is known that with an independent shift $V_i$ following an infinite law  proportional to $v^{-\beta}dv$ on $\R_+$, the shifted   random  (with respect to an infinite measure) set $R_i+V_i$ is stationary (\citep[Proposition 4.1]{lacaux16time}, see also \citep{fitzsimmons88stationary}).  The law  \eqref{eq:V} is  nothing but the normalized restriction to $[0,1]$ of this infinite law.  As a consequence, one could derive that $\bigcap_{i\in I}(R_i+V_i) \equiv R_I+V_I$ is also stationary   with respect to an infinite 
  measure  
 \citep[Corollary B.3]{samorodnitsky17extremal}. This is in accordance with the stationarity of the increments of  the process $\Zab$ in \eqref{eq:Z} (see Section \ref{sec:limit process def}).  
\end{Rem}

From now on we fix 
$\beta\in(0,1)$, $p\in\N$,
such that \eqref{eq:beta} holds. 
  Introduce for $q\ge 2$, a symmetric function $h_q\topp\beta$ on the off-diagonal subset of $(0,1)^q$   determined by
\equh\label{eq:hq}
h_q\topp\beta(x_1,\dots, x_q) = \Gamma(\beta)\Gamma(2-\beta)\prodd j2q (x_j-x_{j-1})^{\beta-1},\  0<x_1<\cdots<x_q<1.
\eque
Here and below, 
for any $q\in\N$, a $q$-variate function $f$ is said to be symmetric, if $f(x_1,\dots,x_q) = f(x_{\sigma(1)},\dots,x_{\sigma(q)})$ for any permutation $\sigma$ of $\{1,\dots,q\}$.
For a symmetric function on the off-diagonal set, we do not specify the values on the diagonal set 
$\{(x_1,\dots,x_q)\in(0,1)^q: x_i = x_j \mbox{ for some $i\ne j$}\}$, which has zero Lebesgue measure and hence does not have any impact 
in our derivation.
Introduce also $h_0\topp\beta:=1$ and $h_1\topp\beta(x):=\Gamma(\beta)\Gamma(2-\beta)$. 
The main result of this section is the following.
\begin{Thm}\label{thm:1}
Let $(R_i)_{i\in\N}$ be i.i.d.~$\beta$-stable regenerative sets and $(V_i)_{i\in\N}$ be i.i.d.~with law \eqref{eq:V}, the two sequences being independent. Given a collection of $I_\ell \in \cl{D}_p $, $\ell=1,\ldots,r$, set 
$
K  = \max\pp{\bigcup_{\ell=1}^r I_\ell}.
$ Then, for all   $\vv{t}=(t_1,\ldots,t_r)\in [0,1]^r$,  
\equh\label{eq:moment limit}
\E \pp{\prod_{\ell =1}^r L_{I_\ell,t_\ell}}=  \frac{1}{\Gamma(\beta_p)^r}  \int_{\vv 0 < \vv x <\vv  t} \prod_{i=1}^K \hIibeta (\xIi) \, d\vv x
\eque
with 
\begin{equation}\label{eq:calI}
\calI(i) := \ccbb{\ell\in\{1,\dots,r\}:i\in I_\ell},~ 
i=1,\dots,K.
\end{equation}
\end{Thm}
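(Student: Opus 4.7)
My plan is to exploit the independence of $(R_i,V_i)$ across $i$ by introducing the ``fat intersection'' $A_{I,t}^{(\delta)}:=\lambda\bigl(\bigcap_{i\in I}(R_i+V_i+[-\delta,\delta])\cap[0,t]\bigr)$, whose Lebesgue measure, unlike that of a $\delta$-thickening of the true intersection $\bigcap_i(R_i+V_i)$, factorizes in expectation via Fubini.  The random covering (Poisson point process) representation of each $R_i$, in the spirit of Fitzsimmons, provides the single-set multi-point hitting probabilities needed for the resulting integrals.

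\textbf{Step 1: single-set asymptotic.}  First I would establish that for one shifted set $R+V$ with $V$ distributed as in \eqref{eq:V} and $0<x_1<\cdots<x_q<1$,
\[
\P\Big(\bigcap_{j=1}^q\{x_j\in R+V+[-\delta,\delta]\}\Big)\sim(2\delta)^{q(1-\beta)}\cdot\frac{h_q^{(\beta)}(x_1,\dots,x_q)}{\Gamma(\beta)^q\Gamma(2-\beta)^q}\qquad(\delta\to0).
\]
This follows from the multi-point renewal/potential density for the unshifted $\beta$-stable regenerative set $R$ (contributing $x_1^{\beta-1}\prod_{j=2}^q(x_j-x_{j-1})^{\beta-1}/\Gamma(\beta)^q$), combined with the beta integral $\int_0^{x_1}(1-\beta)v^{-\beta}(x_1-v)^{\beta-1}dv=\Gamma(\beta)\Gamma(2-\beta)$ from integrating out the shift, which converts the factor $x_1^{\beta-1}$ into the Gamma-prefactor of $h_q^{(\beta)}$.

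\textbf{Step 2: factorization, matching, and conclusion.}  By Fubini and independence of $(R_i,V_i)$ in $i$,
\[
\E\Big[\prod_{\ell=1}^r A_{I_\ell,t_\ell}^{(\delta)}\Big]=\int_{\vv 0<\vv x<\vv t}\prod_{i=1}^K\P\Big(\bigcap_{\ell\in\calI(i)}\{x_\ell\in R_i+V_i+[-\delta,\delta]\}\Big)d\vv x.
\]
Substituting the Step~1 asymptotic together with $\sum_i|\calI(i)|=rp$ and $p(1-\beta)=1-\beta_p$, this becomes
\[
(2\delta)^{r(1-\beta_p)}\,\Gamma(\beta)^{-rp}\Gamma(2-\beta)^{-rp}\int_{\vv 0<\vv x<\vv t}\prod_{i=1}^K h_{|\calI(i)|}^{(\beta)}(\vv x_{\calI(i)})d\vv x\cdot(1+o(1)).
\]
A parallel calculation at $r=1$, combined with $\E[L_{I,t}]=\E[\calM_{\beta_p}((t-V_I)_+)]$, identifies the correct rescaling
\[
\frac{\Gamma(\beta)^p\Gamma(2-\beta)^p}{\Gamma(\beta_p)\Gamma(2-\beta_p)}\cdot\frac{A_{I,t}^{(\delta)}}{l_{\beta_p}(1/(2\delta))}\longrightarrow L_{I,t}\quad(\delta\to0),
\]
and, once this is promoted to joint $L^r$-convergence, substituting back into the joint-moment display makes the $\delta$-powers cancel and the Gamma constants collapse to $1/\Gamma(\beta_p)^r$, yielding \eqref{eq:moment limit}.

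\textbf{Main obstacle.}  The hardest part will be promoting the first-moment identification between the rescaled $A^{(\delta)}$ and $L_{I,t}$ to joint $L^r$-convergence, essential for transferring the factorized moment computation to the true local times.  The Poisson random covering representation is the natural tool here: correlations between thickening events reduce to explicit Poisson intersection integrals that can be expanded and controlled uniformly in $\delta$, even as $\delta\to0$.  A secondary technicality will be proving that the asymptotic in Step~1 is uniform in $\vv x$ over the integration domain $\{\vv 0<\vv x<\vv t\}$, which I would handle via explicit two-sided (non-asymptotic) bounds derived from the same covering representation.
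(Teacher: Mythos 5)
Your computational skeleton is consistent with the paper and would indeed reproduce the right-hand side of \eqref{eq:moment limit}: the Fubini factorization of $\E\bigl[\prod_\ell A^{(\delta)}_{I_\ell,t_\ell}\bigr]$ across the independent indices $i$, the single-set multi-point asymptotics with the potential density $x^{\beta-1}/\Gamma(\beta)$, the Beta integral $\int_0^{x_1}(1-\beta)v^{-\beta}(x_1-v)^{\beta-1}dv=\Gamma(\beta)\Gamma(2-\beta)$ absorbing the shift, and the exponent bookkeeping $p(1-\beta)=1-\beta_p$ all check out, and they parallel the paper's own reduction (which also factorizes joint moments of approximations into single-set multi-point probabilities, but for the Poisson-covering cutoffs $R_i^{(\epsilon)}$ rather than for $\delta$-thickenings, and gets an \emph{exact} product formula plus monotone convergence in place of your asymptotic-with-uniformity issue near the diagonal).

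The genuine gap is the step you yourself flag as the main obstacle: passing from the factorized moment computation for $A^{(\delta)}_{I,t}$ to the statement about the true local times $L_{I,t}$ of \eqref{eq:L_t}. Matching first moments (via the marginal law $L_{I,t}\eqd\calM_{\beta_p}((t-V_I)_+)$) can at best identify a candidate normalizing constant; it cannot show that the rescaled intersection-of-thickenings converges \emph{to} $L_{I,t}$ jointly in $(I,t)$, and without that convergence (in $L^r$, say) you may not interchange $\delta\to0$ with the expectation of the product, so your scheme only evaluates $\lim_\delta \E[\prod_\ell \widehat A^{(\delta)}_{I_\ell,t_\ell}]$ rather than $\E[\prod_\ell L_{I_\ell,t_\ell}]$ — which is precisely the content of the theorem, since the joint law over different $I_\ell$ sharing the same $R_i$'s is what was not previously known. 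Two separate ingredients are missing and neither follows from "Poisson intersection integrals controlled uniformly in $\delta$" as stated: (i) existence of an a.s./$L^m$ limit of the approximations as the approximation parameter tends to $0$ — in the paper this comes for free from the reverse-martingale identity \eqref{eq:rev mart}, $\E(\Delta^{(\eta)}_{s,t}(I)\mid\calN_\epsilon)=\Delta^{(\epsilon)}_{s,t}(I)$, which is special to the covering cutoff and fails for your thickening family, so you would need an explicit two-scale $L^2$-Cauchy argument that is not sketched; and (ii) identification of that limit with the Kingman-functional local time $L_{I,t}$ of the intersection (your $A^{(\delta)}$ uses the intersection of thickenings, not the thickening of the intersection), which in the paper requires showing the limit is continuous, additive and increases only on the shifted $\beta_p$-stable regenerative set, invoking Maisonneuve's theorem to get proportionality, and a strong-regeneration argument to pin the proportionality constant to one (Lemma \ref{lem:local time identical}). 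Unless you supply analogues of (i) and (ii) — or reroute through a structure, like the covering cutoff, that supplies them — the proof does not close.
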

Above and below, we write $\vv x = (x_1,\dots,x_r), d\vv x = dx_1\dots dx_r$,
$\vv 0 = (0,\dots,0), \vv 1 = (1,\dots,1)$, and $\vv x<\vv y$ is understood in the coordinate-wise sense. Also, write
\[
\xIi = (x_\ell)_{\ell\in \calI(i)},
\]
understood as the vector in $\R_+^{|\calI(i)|}$. (Since each $h_{|\calI(i)|}\topp\beta$ is a symmetric function, the order of coordinates of $\xIi$ is irrelevant here.)

Write $\vv V_I = (V_i)_{i\in I}$ and $\vv R_I = (R_i)_{i\in I}$. In view of \eqref{eq:LIt}, 
from now on we write explicitly 
$L_{I,t} \equiv L_{I,t}(\vv R_I,\vv V_I)$. We have, by Fubini's theorem,
\[
\E \pp{\prod_{\ell =1}^r L_{I_\ell,t_\ell}(\vv R_{I_\ell},\vv V_{I_\ell})}= \int_{(0,1)^K}\esp\pp{\prodd\ell1rL_{I_\ell,t_\ell}(\vv R_{I_\ell},\vv v_{I_\ell})}(1-\beta)^K\prodd i1Kv_i^{-\beta}d\vv v.
\]
We shall  establish a formula for
\[
\Psi(\vv v):= \esp\pp{\prodd\ell1rL_{I_\ell,t_\ell}(\vv R_{I_\ell},\vv v_{I_\ell})}, \mfa \vv v\in (0,1)^K,
\]
where the expectation is with respect to   the randomness coming from $\vv R_{I_\ell}$, $\ell=1,\ldots,r$. 
At the core of our argument is the following proposition.
Let $g_q$, $q\in \N$ be  symmetric functions on the off-diagonal subset of $(0,1)^q$ such that
\equh\label{eq:g_q}
g_q^{(\beta)}(x_1,\ldots,x_q)= \prod_{j=1}^q (x_j - x_{j-1})^{\beta-1}, \ \ x_0:=0 < x_1 < \cdots < x_q <1, 
\eque
and $g_0^{(\beta)} := 1$.
 We write $\max(\vv v_I) = \max_{i\in I}v_i$, and similarly for $\min(\vv v_I)$.

\begin{Pro}\label{prop:psy} Under the assumption of Theorem \ref{thm:1},
\begin{equation}\label{eq:psy}
\Psi(\vv v)
=  \frac{1}{\Gamma(\beta_p)^r}   \int_{\max(\vv v_{I_\ell}) < x_\ell < t_\ell, \, \ell=1,\dots,r} \prod_{i=1}^K  g^{(\beta)}_{|\cl{I}(i)|}(\xIi-v_{i}\vv 1)d\vv x.
\end{equation}
In particular, $\Psi(\vv v) = 0$ if $\max(\vv v_{I_\ell})\ge 
t_\ell$ for some $\ell=1,\dots,r$. 
\end{Pro}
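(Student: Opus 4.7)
The plan is to compute the joint moment $\Psi(\vv v)$ by way of the $\epsilon$-thickening definition of the local time combined with the random covering representation of independent stable regenerative sets. First, the vanishing case is immediate: since $0\in R_i$ a.s., $R_i+v_i\subset[v_i,\infty)$, and so whenever $\max(\vv v_{I_\ell})\ge t_\ell$ for some $\ell$, the set $\bigcap_{i\in I_\ell}(R_i+v_i)\cap[0,t_\ell]$ is a.s.\ either empty or a singleton on the boundary, yielding $L_{I_\ell,t_\ell}=0$ and making both sides of \eqref{eq:psy} equal to zero (the integration region on the right is also empty).

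For the main case, I would introduce the thickened approximations
\[
L^{(n)}_{I_\ell,t_\ell}:=\frac{1}{l_{\beta_p}(n)}\lambda\pp{\bigcap_{i\in I_\ell}(R_i+v_i)\cap[0,t_\ell]+\bb{-\tfrac{1}{2n},\tfrac{1}{2n}}},
\]
so that $L_{I_\ell,t_\ell}=\limsup_n L^{(n)}_{I_\ell,t_\ell}$. Applying Fubini to the product and extracting the prefactor gives $\E\prod_\ell L^{(n)}_{I_\ell,t_\ell}=l_{\beta_p}(n)^{-r}\int_{\prod_\ell(0,t_\ell)}\P_n(\vv x)\,d\vv x$, where $\P_n(\vv x)$ is the probability that, for every $\ell$, the intersection $\bigcap_{i\in I_\ell}(R_i+v_i)$ meets the $1/n$-window centred at $x_\ell$. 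The random covering scheme of Fitzsimmons, Fristedt and Shepp realizes each $R_i$ as the uncovered part of an independent Poisson point process $\Xi_i$ on $[0,\infty)\times(0,\infty)$ with intensity $du\otimes\Pi_\beta(dv)$, so independence across $i$ allows $\P_n(\vv x)$ to be expressed as a product of single-$i$ contributions, each depending only on $(x_\ell)_{\ell\in\calI(i)}$ and $v_i$ and computed via the exponential formula for the void probabilities of $\Xi_i$ on the union of covering intervals that would hit the chosen points. These contributions, after rescaling by $n^{|\calI(i)|(1-\beta)}$, converge to $g^{(\beta)}_{|\calI(i)|}(\vv x_{\calI(i)}-v_i\vv 1)$ up to an explicit Gamma constant; summing the exponents as $\sum_i|\calI(i)|(1-\beta)=rp(1-\beta)=r(1-\beta_p)$ matches $l_{\beta_p}(n)^{-r}$, while the Gamma constants assemble into $1/\Gamma(\beta_p)^r$.

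The hard part will be bridging from the ``factorized'' event ``each $R_i+v_i$ individually meets the $1/n$-window around $x_\ell$'', which is only an upper bound for $\P_n(\vv x)$, to the genuine event which requires a \emph{common} hitting point of all $\{R_i+v_i\}_{i\in I_\ell}$ in the window. The two events have the same rate in $n$ but differ by a constant factor determined by how the individual nearest-point approximations from different $i$'s align; it is here that the stability scaling of $\Pi_\beta$ enters, and the random covering scheme provides the couplings needed to compute the correct constant and, critically, to show that this constant combines with the Gamma factors from the exponential formula to give exactly $1/\Gamma(\beta_p)^r$. Finally, a dominated-convergence (or monotone) argument leveraging the a.s.\ non-decreasing continuity of $t\mapsto L_{I,t}$ noted after \eqref{eq:LIt} will transfer the moment identity from the approximations $L^{(n)}_{I_\ell,t_\ell}$ to the limit $L_{I_\ell,t_\ell}$, completing the proof of \eqref{eq:psy}.
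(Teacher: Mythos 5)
Your outline stalls exactly at the point you flag as ``the hard part,'' and that point is the entire content of the proposition. With Kingman's window-thickening applied to the \emph{intersection}, the quantity $\P_n(\vv x)$ is the probability that $\bigcap_{i\in I_\ell}(R_i+v_i)$ meets a $1/n$-window around $x_\ell$ for every $\ell$, and this does \emph{not} factorize over $i$: requiring each $R_i+v_i$ separately to meet the window is only an upper bound, and the discrepancy is governed by how the $p$ independent sets align inside a shrinking window --- precisely the second-order structure of the intersection whose local time you are trying to compute. Saying that ``the random covering scheme provides the couplings needed to compute the correct constant'' is an appeal to the conclusion, not an argument; no mechanism is given that produces the factor $1/\Gamma(\beta_p)^r$ rather than some other constant. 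A second unresolved step is the exchange of $\limsup_n$ with the expectation of the product: the approximations $L^{(n)}_{I,t}$ are neither monotone in $n$ nor obviously uniformly integrable in the required powers, and the uniform moment bounds you would need to invoke dominated convergence are again the very estimates under construction.

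The paper takes a genuinely different route that avoids both obstacles. Instead of thickening the intersection, it truncates the covering scheme: $R_i^{(\epsilon)}$ is the complement of the union of covering intervals of length at least $\epsilon$, a set of positive Lebesgue measure, and the approximate local time is the rescaled occupation time $\int_0^t \mathbf{1}\{x\in\bigcap_{i\in I}(R_i^{(\epsilon)}+v_i)\}\,dx$. After Fubini, the relevant probability is that of finitely many \emph{exact points} lying in each $R_i^{(\epsilon)}$, which factorizes over $i$ exactly (independence of the Poisson strips) and is computed in closed form as a void probability (Lemma 2.6); no alignment constant ever appears. The price is paid elsewhere: one must show that the $\epsilon\downarrow 0$ limit of these occupation times is the Kingman local time $L_{I,t}$ with the right normalization. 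The paper does this via a reverse-martingale property in $\epsilon$ (giving a.s.\ and $L^m$ convergence, hence convergence of all joint moments), then Maisonneuve's theorem to identify the limit as a constant multiple of the local time, and finally a strong-regeneration argument comparing first moments to force the constant to be $1$. If you want to salvage your approach, you would need an honest asymptotic evaluation of $\P_n(\vv x)$ for the intersection --- essentially redoing the potential-theoretic computation of \citep{fitzsimmons85intersections} --- whereas adopting the truncation-plus-martingale scheme sidesteps it entirely. (Your argument for the degenerate case $\max(\vv v_{I_\ell})\ge t_\ell$ is fine, and in the paper it is simply read off from the formula, whose integration region is then empty.)
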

The proof of the proposition 
is postponed to
 Section \ref{sec:covering}  below. 

\begin{proof}[Proof of Theorem \ref{thm:1}] 
We shall compute
\equh\label{eq:convolution0}
(1-\beta)^{K}\int_{(0,1)^K}\Psi(\vv v) \prod_{i=1}^K v_i^{-\beta}d \vv v.
\eque
We express the constraint $\max(\vv v_{I_\ell}) < x_\ell$, $\ell=1,\ldots,r$ in \eqref{eq:psy} as
\[
v_i < \min(\xIi)=:m_i,\quad  i=1,\dots,K.
\]
Then by Proposition \ref{prop:psy}, the expression in  \eqref{eq:convolution0} becomes
\equh\label{eq:convolution}
\frac{(1-\beta)^{K}}{\Gamma(\beta_p)^r}\int_{\vv 0< \vv x< \vv t}\int_{\vv 0<\vv v<\vv m}\prodd i1K\pp{g_{|\calI(i)|}^{(\beta)}(\xIi-v_i\vv 1)v_i^{-\beta}}d\vv v d\vv x.
\eque
A careful  examination shows that
\[
g_{|\calI(i)|}^{(\beta)}(\xIi-v_i\vv 1) =\frac1{\Gamma(\beta)\Gamma(2-\beta)} (m_i-v_i)^{\beta-1}h_{|\calI(i)|}^{(\beta)}(\vv x_{\calI(i)}).
\]
\comment{
Observe that $g_{|\calI(i)|}(\xIi-v_i\vv 1) = g_{|\calI(i)|}^{(\beta)}(\vv x_{\calI(i)}^*)$ with $\vv x_{\calI(i)}^* = (x^*_\ell)_{\ell\in \calI(i)}$ given by
\[
x^*_\ell := \begin{cases}
x_\ell & x_\ell \ne m_i\\
m_i-v_i & x_\ell = m_i, 
\end{cases}
\ell\in\calI(i).
\]
(We only need consider the case $m_i = \min(\xIi)$ is uniquely achieved, since $g_{|\calI(i)|}$ is only non-zero off-diagonal.) 
}Then, \eqref{eq:convolution} becomes
\begin{multline*}
\frac1{\Gamma(\beta_p)^r}\pp{\frac1{\Gamma(\beta)\Gamma(1-\beta)}}^K\\
\times \int_{\vv 0<\vv x< \vv t}\int_{\vv 0<\vv  v<\vv m}\prodd i1K\pp{(m_i-v_i)^{\beta-1}v_i^{-\beta}h_{|\calI(i)|}^{(\beta)}(\vv x_{\calI(i)})}d\vv vd \vv x\\
= \frac1{\Gamma(\beta_p)^r}\int_{\vv 0<\vv x< \vv t}\prodd i1Kh_{|\calI(i)|}^{(\beta)}(\xIi)d\vv x,
\end{multline*}
by   integrating with respect to each $v_i$ separately and applying the relation  between beta and gamma functions. 
Then the desired result follows.
\end{proof}
In particular, we have the following.
\begin{Cor}\label{Cor:incre moment}
Let $L_{I,t}$ be as in \eqref{eq:LIt}. Then for $0\le s<t\le 1$,
\equh\label{eq:4.5}
\E \left(L_{I,t}-L_{I,s}\right)^r = \E L_{I,t-s}^r =  \frac{\Gamma(\beta)^p\Gamma(2-\beta)^pr!}{ \Gamma(\beta_p) \Gamma((r-1)\beta_p+2)}   \cdot    (t-s)^{(r-1)\beta_p+1}.
\eque
\end{Cor}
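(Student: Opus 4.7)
The plan is to apply Theorem \ref{thm:1} twice: first with $I_\ell = I$ and $t_\ell = u := t - s$ for every $\ell$ to obtain the explicit formula, and second through a binomial expansion of $(L_{I,t} - L_{I,s})^r$ to reduce the increment moment to a shifted copy of the same integral.

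Specializing Theorem \ref{thm:1} to $I_\ell \equiv I$ gives $K = \max I$, and for each $i \in \{1, \dots, K\}$ one has $\calI(i) = \{1, \dots, r\}$ when $i \in I$ and $\calI(i) = \emptyset$ otherwise; since $h_0^{(\beta)} = 1$ and $|I| = p$, the integrand on the right-hand side of \eqref{eq:moment limit} collapses to $[h_r^{(\beta)}(x_1, \dots, x_r)]^p$. Taking $t_\ell \equiv u$, symmetrizing to the simplex $0 < x_1 < \cdots < x_r < u$, switching to gap variables $y_j = x_j - x_{j-1}$ (with $x_0 = 0$), integrating out $y_1$ (whose exponent is zero) against the constraint $\sum y_j < u$, and then rescaling $y_j = u z_j$ for $j \ge 2$, one extracts a factor $u^{(r-1)\beta_p + 1}$ and is left with the Dirichlet integral
\[
\int_{z_j > 0,\ \sum_{j=2}^r z_j < 1} \Bigl(1 - \sum_{j=2}^r z_j\Bigr) \prod_{j=2}^r z_j^{\beta_p - 1}\, dz_2 \cdots dz_r = \frac{\Gamma(\beta_p)^{r-1}}{\Gamma((r-1)\beta_p + 2)},
\]
which, combined with the prefactor $r!\,[\Gamma(\beta)\Gamma(2-\beta)]^p / \Gamma(\beta_p)^r$ produced by the symmetrization, yields exactly the right-hand side of \eqref{eq:4.5}.

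For the equality $\E(L_{I,t} - L_{I,s})^r = \E L_{I,t-s}^r$, I would expand $(L_{I,t} - L_{I,s})^r = \sum_{k=0}^r \binom{r}{k}(-1)^{r-k} L_{I,t}^k L_{I,s}^{r-k}$ and apply Theorem \ref{thm:1} to each mixed moment with time vector $(t,\dots,t,s,\dots,s)$ ($k$ copies of $t$). Writing $\inddd{x_\ell \le t} = \inddd{x_\ell \le s} + \inddd{s < x_\ell \le t}$ and exploiting the symmetry of $[h_r^{(\beta)}]^p$, each $\E L_{I,t}^k L_{I,s}^{r-k}$ becomes $\Gamma(\beta_p)^{-r} \sum_{a=0}^k \binom{k}{a} G_r^a$ with $G_r^a := \int_{(s,t]^a \times (0,s]^{r-a}} [h_r^{(\beta)}(\vv x)]^p\, d\vv x$. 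Interchanging the $k$- and $a$-summations and using $\binom{r}{k}\binom{k}{a} = \binom{r}{a}\binom{r-a}{k-a}$ together with $(1-1)^{r-a} = \delta_{a,r}$, every term vanishes except $a = r$, leaving
\[
\Gamma(\beta_p)^r \E(L_{I,t} - L_{I,s})^r = \int_{(s,t]^r} [h_r^{(\beta)}(\vv x)]^p\, d\vv x.
\]
Since $h_r^{(\beta)}$ depends only on the consecutive differences of its sorted arguments, the substitution $\vv y = \vv x - s\vv 1$ turns this integral into $\int_{(0,t-s]^r} [h_r^{(\beta)}(\vv y)]^p d\vv y$, which is $\Gamma(\beta_p)^r \E L_{I,t-s}^r$ by another application of Theorem \ref{thm:1}.

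The only step I expect to require some care is the combinatorial cancellation that collapses the double sum to $a=r$; everything else is a direct use of Theorem \ref{thm:1} combined with standard calculus (gap-variable change and Dirichlet integral).
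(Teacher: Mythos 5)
Your proposal is correct and follows essentially the same route as the paper: the explicit formula comes from specializing Theorem \ref{thm:1} to $I_1=\cdots=I_r=I$ and evaluating the resulting integral by a change of variables and the beta--gamma relation (your Dirichlet-integral computation is exactly the identity the paper invokes), and your binomial expansion with the combinatorial cancellation is precisely the ``expansion of \eqref{eq:moment limit}'' option the paper cites for the first equality (the paper also notes an alternative via stationarity of $R_i+V_i$, which you did not need). Your write-up just carries out these steps in more detail than the paper's terse proof.
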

\begin{proof}
The second equality follows  from \eqref{eq:moment limit}  with $I_1=
\cdots
=I_r=I$ and the following identity:
\[
\int_{0<x_1<\cdots<x_r<1}\prodd i2r(x_{i}-x_{i-1})^{\gamma}d\vv x= \frac{\Gamma(\gamma+1)^{r-1} }{\Gamma(r(\gamma+1)-\gamma+1)} \mfa \gamma>-1, r\ge 2,
\]
which can be obtained by changes of variables and the relation  between beta and gamma functions.
The first equality  can  be either derived from \eqref{eq:moment limit} through an expansion, or  from the fact that each underlying shifted $\beta$-stable regenerative set $R_i+V_i$ is stationary when restricted to the interval $[0,1]$ (Remark \ref{rem:stationary}).   
\end{proof}
\begin{Rem}
As mentioned before Remark \ref{rem:stationary},  
when restricted to $[0,1]$, $L_{I,t} \stackrel{d}{=} M_{\beta_p}((t-V_I)_+)$ where $V_I$ is a sub-random variable with  density function $c_{\beta,p}(1-\beta_p)v^{-\beta_p}$ with $c_{\beta,p} = (\Gamma(\beta)\Gamma(2-\beta))^p/(\Gamma(\beta_p)\Gamma(2-\beta_p))$ \citep[Eq.(B.9)]{samorodnitsky17extremal}. Therefore, all the properties of $(L_{I,t})_{t\in[0,1]}$, for a single fixed $I$, can also be derived from the corresponding $(M_{\beta_p}((t-V_I)_+)_{t\in[0,1]}$, where $\proba(V_{I}\le v) = v^{1-\beta_p}$ and $V_{I}$ is independent from $M_{\beta_p}$. For example, the $r$-th moments of the latter have been known \citep[bottom of page 77]{owada16limit}, and they entail \eqref{eq:4.5} as an alternative proof.
\end{Rem}
\subsection{Random covering scheme}\label{sec:covering}
To establish Proposition \ref{prop:psy}, we shall use a construction of local times
  motivated from   the so-called random covering scheme, by first constructing a stable regenerative set as the set left uncovered by a family of random open intervals based on a Poisson point process  (e.g.~\citep{bertoin00two,fitzsimmons85set} and \citep[Chapter 7]{bertoin99subordinators}).   

We shall work with a specific construction of $(R_i)_{i\in\N}$ as follows.  Let  $\cl{N}=\sum_{\ell\in\N} \delta_{(a_\ell,y_\ell,z_\ell)}$   be a Poisson point process on $[0,K) \times \R_+\times\R_+$, $K\in \N$, with intensity measure 
$dadyz^{-2}dz$, where $\delta$ denotes the Dirac measure.
 Define
\[
O_i:=\bigcup_{\ell:a_\ell\in J_i}(y_\ell,y_\ell+z_\ell),\quad R_i:= [0,\infty)\setminus O_i,\quad i=1,\ldots,K,
\]
where $J_i=[i-1,i-\beta)$.
It is known that $(R_i)_{i=1,\dots,K}$ constructed above are i.i.d.~$\beta$-stable regenerative sets starting at the origin \citep[Example 1]{fitzsimmons85set}.
In this section we shall work with deterministic shifts
\[
\vv v = (v_1,\dots,v_K)\in (0,1)^K.
\]
 Let
\begin{equation}\label{eq:D_p(m)}
\cl{D}_p(m):=\{I\in \cl{D}_p: ~\max I\le m\}, ~m\in\N.
\end{equation}
where $\cl{D}_p$ is as in \eqref{eq:D_p}. 
With  the functional $L_t$  in \eqref{eq:L_t},  consider
\equh\label{eq:LItv}
L_{I,t} \equiv L_t\pp{\bigcap_{i\in I}(R_i+v_i)},\ I\in\calD_p(K),\ t\ge 0,
\eque
where $(R_i)_{i\in\N}$ are as above. We emphasize that the  notation in \eqref{eq:LItv}   is {\em strictly restricted to this section}, and in particular is different from our notation of $L_{I,t}$ in the other sections, where $v_i$ will be replaced by random $V_i$.

Next, we consider the following approximations of $(R_i)_{i=1,\dots,K}$. For any $\epsilon>0$,  we set
\[
O_i^{(\epsilon)}:=\bigcup_{\ell:a_\ell\in J_i, z_\ell \ge \epsilon}(y_\ell,y_\ell+z_\ell),\quad R_i^{(\epsilon)}:= [0,\infty)\setminus O_i^{(\epsilon)},\quad i=1,\ldots,K.
\]
Define  
\[
\wt R_i\topp\epsilon := R_i\topp\epsilon+v_i \qmand \wt R_I\topp\epsilon := \bigcap_{i\in I}\wt R_i\topp\epsilon, 
\quad I\in\calD_p(K).
\]
Introduce then
\begin{equation}\label{eq:Delta_st}
L_{I,t}\topp\epsilon:= \frac{1}{\Gamma(\beta_p)} \left(\frac{\epsilon}{e}\right)^{{\beta_p-1}}\int_0^t \inddd{x\in   \wt{R}_I^{(\epsilon)} } dx    
\qmand \Delta_{s,t}^{(\epsilon)}(I):=L_{I,t}\topp\epsilon-L_{I,s}^{(\epsilon)},
\end{equation}
for $0<s<t$. 
 Set also
 \[\cl{N}_\epsilon:= \sum_{\ell:\, z_\ell \ge \epsilon} \delta_{(a_\ell,y_\ell,z_\ell)}.
 \]
 
Below we begin with calculating certain asymptotic moments involving \eqref{eq:Delta_st}. 

\begin{Lem}\label{lem:1}
For any $I_\ell\in \cl{D}_p(K)$, $\vv v \in (0,1)^K$, and $s_\ell, t_\ell$ satisfying $\max(\vv v_{I_\ell}) < s_\ell < t_\ell \le 1$, $\ell=1,\dots,r$,  we have
\begin{align}
&\lim_{\substack{s_\ell \downarrow \max (\vv v_{I_\ell}), \\ \ell=1,\dots,r}} \lim_{\epsilon \downarrow 0} \esp\pp{\prodd\ell1r \Delta_{s_\ell, t_\ell}^{(\epsilon)}(I_\ell) } \label{eq:psy1} \\
&\quad = \Gamma(\beta_p)^{-r}  \int_{\max(\vv v_{I_\ell}) < x_\ell < t_\ell, \, \ell=1,\dots,r} \prod_{i=1}^K  g_{|\cl{I}(i)|}^{(\beta)}(\xIi-v_{i}\vv1)d\vv x. \notag
\end{align}

\end{Lem}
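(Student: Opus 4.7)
The plan is to expand the product of increments via Fubini's theorem, decompose the joint event using the independence of the Poisson strips $J_i$, and apply the Poisson void-probability formula to obtain an explicit expression that, after cancellation of $\epsilon$-factors, already matches the integrand on the right-hand side of \eqref{eq:psy1}.

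By Fubini,
\[
\esp\pp{\prodd\ell1r \Delta_{s_\ell,t_\ell}^{(\epsilon)}(I_\ell)} = \frac{(\epsilon/e)^{r(\beta_p-1)}}{\Gamma(\beta_p)^r}\int_{\prodd\ell1r[s_\ell,t_\ell]} \proba\pp{\bigcap_{\ell=1}^r \sccbb{x_\ell\in\wt R_{I_\ell}^{(\epsilon)}}}\,d\vv x.
\]
The event in the probability recollects as $\bigcap_{i=1}^K\sccbb{A_i(\vv x)\subset R_i^{(\epsilon)}}$ with $A_i(\vv x):=\sccbb{x_\ell-v_i:\ell\in\calI(i)}$, and since the Poisson restrictions on the disjoint strips $J_i$, $i=1,\dots,K$, are independent, the probability factors across $i$. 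The Poisson void formula gives for each factor
\[
\proba\pp{A_i(\vv x)\subset R_i^{(\epsilon)}} = \exp\pp{-(1-\beta)\summ{k}{1}{|\calI(i)|}\int_0^{\delta_k^{(i)}}\frac{dw}{\max(\epsilon,w)}},
\]
where $\delta_k^{(i)}$ denotes the $k$-th consecutive gap of the sorted augmented set $\sccbb{0}\cup A_i(\vv x)$.

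On the \emph{nice region} where every $\delta_k^{(i)}\ge\epsilon$, the inner integral equals $1+\log(\delta_k^{(i)}/\epsilon)$, so the above factor simplifies to $(\epsilon/e)^{(1-\beta)|\calI(i)|}g_{|\calI(i)|}^{(\beta)}(\xIi-v_i\vv 1)$. Using $\sum_{i=1}^K |\calI(i)|=\sum_{\ell=1}^r|I_\ell|=rp$ together with the algebraic identity $r(\beta_p-1)=-rp(1-\beta)$, all powers of $\epsilon/e$ cancel exactly, and the integrand on the nice region reduces to $\prodd{i}{1}{K}g_{|\calI(i)|}^{(\beta)}(\xIi-v_i\vv 1)$, which is the desired limit.

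It remains to show that the \emph{bad region} (some $\delta_k^{(i)}<\epsilon$) contributes negligibly, and then pass to the second limit. Because $s_\ell>\max(\vv v_{I_\ell})$ is fixed, $\delta_1^{(i)}\ge s_\ell-v_i$ is bounded away from $0$ for small $\epsilon$, so bad gaps can only arise from close pairs $|x_{\ell_1}-x_{\ell_2}|<\epsilon$ with $\sccbb{\ell_1,\ell_2}\subset\calI(i)$ for some $i$. A single such close pair affects at most $|I_{\ell_1}\cap I_{\ell_2}|\le p$ strips, so the singular factor $(e/\epsilon)^{(1-\beta)N_{\mathrm{bad}}}$ is bounded by $(e/\epsilon)^{p(1-\beta)}$; combined with the $O(\epsilon)$ volume of the close-pair sub-region and the decay $\exp(-(1-\beta)\sum_{\mathrm{bad}}\delta/\epsilon)$, the total contribution is of order $\epsilon^{1-p(1-\beta)}$, which is $o(1)$ precisely because the standing hypothesis \eqref{eq:beta} yields $p(1-\beta)<1$. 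Multiple simultaneous close pairs are handled by the same estimate applied inductively, each extra close pair contributing another $\epsilon^{1-p(1-\beta)}$ factor. Once the inner limit gives the integral formula over $\sccbb{s_\ell<x_\ell<t_\ell,\,\ell=1,\dots,r}$, the outer limit $s_\ell\downarrow\max(\vv v_{I_\ell})$ follows by monotone convergence, as the boundary singularities $(x_\ell-v_i)^{\beta-1}$ are integrable for $\beta>0$. The main technical obstacle is precisely this bookkeeping of bad-region contributions, where the hypothesis \eqref{eq:beta} enters essentially.
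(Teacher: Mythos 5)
Your computation is correct, and its core --- Fubini, factorization over the independent Poisson strips $J_i$, the per-gap void-probability formula, and the exact cancellation of the powers of $\epsilon/e$ against the normalization $(\epsilon/e)^{r(\beta_p-1)}$ --- is precisely the paper's. Where you genuinely diverge is in passing to the limit $\epsilon\downarrow 0$: the paper never splits into a nice/bad region. It keeps the exact value of the void probability also when a gap $\delta\le\epsilon$, encoding it in the function $f_\epsilon$ of \eqref{eq:h_epsilon} (equal to $(e^{\delta/\epsilon-1}\epsilon)^{\beta-1}$ there), so that the integrand equals $\prod_{i=1}^K g^{(\beta)}_{|\calI(i)|,\epsilon}(\xIi-v_i\vv 1)$ exactly, on the whole domain; since $f_\epsilon(y)\uparrow y^{\beta-1}$ as $\epsilon\downarrow 0$, a single application of the monotone convergence theorem then handles both the inner and the outer limit, with no error estimate and, notably, no use of the hypothesis \eqref{eq:beta} at this step. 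Your route instead discards gaps smaller than $\epsilon$ and bounds the discarded contribution; the exponent bookkeeping does come out right (a cluster of $m$ $\epsilon$-close coordinates creates at most $(m-1)p$ bad gaps, each worth $\epsilon^{-(1-\beta)}$, against an $O(\epsilon^{m-1})$ volume, giving $\epsilon^{(m-1)\beta_p}$), but this is exactly where your argument is only sketched: the ``inductive'' treatment of several simultaneous clusters, and the fact that after extracting the divergent factor the remaining singular factors $\delta^{\beta-1}$ integrate to a quantity bounded uniformly in $\epsilon$, both need to be written out. In short, your truncation argument can be completed, but it costs the extra hypothesis $\beta_p>0$ and nontrivial bad-region bookkeeping, while buying nothing over the paper's device; your nice-region identity is the paper's \eqref{e:multi.pts} restricted to gaps exceeding $\epsilon$, and retaining the exact $f_\epsilon$-formula plus monotone convergence is the cleaner way to finish.
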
 
We start with a preparation. Define $g_{q,\epsilon}^{(\beta)}$ similarly as $g_q^{(\beta)}$ in \eqref{eq:g_q} as the symmetric function determined by 
\[
g_{q,\epsilon}^{(\beta)}(x_1,\ldots,x_q)  =  \prod_{j=1}^q f_\epsilon(x_j - x_{j-1}), \ \ x_0:=0 < x_1 < \cdots < x_q <1, 
\]
where 
\begin{equation}\label{eq:h_epsilon}
f_\epsilon(y):= \big(e^{y/\epsilon-1}\epsilon \big)^{\beta-1}1_{\{y\le \epsilon\}} +  y^{\beta-1}1_{\{y> \epsilon\}},\quad y>0.
\end{equation}
We   set  also $g_{0,\epsilon}^{(\beta)}:=1$. 
\begin{proof} [Proof of Lemma \ref{lem:1}] 
First, we   claim that if
\begin{equation*} 
(x_1,\ldots,x_q)\in D_q:=\{(x_1,\ldots,x_q) \in (0,1)^q:\ x_i\neq x_j \text{ for }i\neq j \},  \ q\in \N,
\end{equation*}
then for $\epsilon\in (0,1)$,
\begin{equation}   \label{e:multi.pts}
P\pp{x_i \in  {R}^{(\epsilon)}_1, ~i=1,\ldots,q} = \left(\frac{e}{\epsilon}\right)^{q(\beta-1)} g_{q,\epsilon}^{(\beta)}(x_1,\ldots,x_q).
\end{equation}
For the proof, assume without loss of generality that  $x_0=0<x_1<\cdots<x_q< 1$.
 Observe that 
  the event in the probability sign in \eqref{e:multi.pts} occurs exactly when the Poisson point process $\calN$ has no points in the following regions 
 \[
 \ccbb{(a,y,z)\in[0,1-\beta)\times[x_{i-1},x_i)\times\R_+:\, y+z>x_i,\, z>\epsilon }, \ \ i=1,\dots, q. 
 \]
 Therefore,
\[
P\pp{x_i \in  {R}_1^{(\epsilon)}, ~i=1,\ldots,q}
=\prod_{i=1}^q \exp\pp{ -(1-\beta)  \int_{x_{i-1}}^{x_i}\int_{\max\{x_i-y, \epsilon\}}^\infty     \frac{1}{z^2}  dz dy }.
\]
By elementary calculations,
\[
\int_{x_{i-1}}^{x_i}\int_{\max\{x_i-y, \epsilon\}}^\infty     \frac{1}{z^2}  dz dy = \begin{cases}
\displaystyle\frac{x_i-x_{i-1}}\epsilon & \text{ if } x_i-x_{i-1}\le \epsilon;\\ \\
\displaystyle\log\pp{\frac e\epsilon(x_i-x_{i-1})} & \text{ if } x_i-x_{i-1}>\epsilon.
\end{cases} 
\]
Putting these together yields the desired result.

Now let us turn 
our
 attention to proving \eqref{eq:psy1}. 
We have, by \eqref{eq:Delta_st} and Fubini,
\begin{align}
\E  \pp{ \prod_{\ell =1}^r \Delta_{s_\ell,t_\ell}^{(\epsilon)}(I_\ell)}
& = \frac1{\Gamma(\beta_p)^r} \left(\frac{\epsilon}{e}\right)^{rp(\beta-1)}\E\left( \prod_{\ell=1}^r\int_{s_\ell}^{t_{\ell}} \inddd{x\in   \wt{R}_{I_\ell}^{(\epsilon)}} dx\right)\nonumber\\
&= \frac1{\Gamma(\beta_p)^r}  \left(\frac{\epsilon}{e}\right)^{rp(\beta-1)} \int_{\vv s < \vv x < \vv t}
  P\pp{x_\ell\in \wt{R}_{I_\ell}^{(\epsilon)},~\ell=1,\dots,r }d\vv x. \label{eq:Delta_epsilon}
\end{align}
Notice that
$\sccbb{x_\ell\in\wt R_{I_\ell}\topp\epsilon} = \bigcap_{i:\ell\in\calI(i)}\sccbb{x_\ell-v_i\in R_i\topp\epsilon}$.
Therefore by independence,  we get
\[
P\pp{x_\ell\in \wt{R}_{I_\ell}^{(\epsilon)},~\ell=1,\ldots,r } = \prod_{i=1}^K  P\pp{x_\ell-v_i\in  {R}_i^{(\epsilon)},~ \ell\in \cl{I}(i)}.
\]
Note that the probability above is  zero if one of $x_\ell-v_i$ is negative, $i=1,\ldots,K$. Hence
by \eqref{e:multi.pts} and the fact $\sum_{i=1}^K |\cl{I}(i)|=rp$,  we have
\[
 \prod_{i=1}^K P\pp{x_\ell-v_i\in  {R}_i^{(\epsilon)},~ \ell\in \cl{I}(i)}=  
 \left(\frac e{\epsilon}\right)^{rp(\beta-1)}\prod_{i=1}^K  g_{|\cl{I}(i)|,\epsilon}^{(\beta)}(\xIi-v_{i}\vv1) 1_{\{\xIi\ge v_{i}\vv1\}}.
\]
Summing up, in view of \eqref{eq:Delta_epsilon}, we claim that
\begin{align}
&\lim_{\substack{s_\ell \downarrow \max (\vv v_{I_\ell}), \\ \ell=1,\dots,r}} \lim_{\epsilon \downarrow 0} \esp\pp{\prodd\ell1r \Delta_{s_\ell, t_\ell}^{(\epsilon)}(I_\ell) }  \label{eq:moment epsilon zero} \\ &= \Gamma(\beta_p)^{-r}   \int_{\max(\vv v_{I_\ell}) < x_\ell < t_\ell, \, \ell=1,\dots,r}\prod_{i=1}^K  g_{|\cl{I}(i)|}^{(\beta)}(\xIi-v_{i}\vv1)d\vv x  \notag
\end{align}
where $g_q^{(\beta)}$ is as in \eqref{eq:g_q}.
Indeed it is elementary to verify from \eqref{eq:h_epsilon} that as $\epsilon\downarrow 0$, we have $f_\epsilon(y) \uparrow y^{\beta-1}$ for any $y>0$, and hence $g_{q,\epsilon}^{(\beta)}\uparrow g_q^{(\beta)}$ a.e.. So \eqref{eq:moment epsilon zero} follows from    the monotone convergence theorem. 
\end{proof}

Next in order to establish Proposition \ref{prop:psy}, we need   to identify an a.s.~limit of \\ $\lim_{\Q\ni s_\ell \downarrow \max (\vv v_{I_\ell})} \lim_{\epsilon \downarrow 0} \prodd\ell1r \Delta_{s_\ell, t_\ell}^{(\epsilon)}(I_\ell)$, together with an interchangeability between the limits and an expectation. To this aim we shall provide the following two lemmas. In the first lemma below, if $p=1$, this is the same result as that in \citep{bertoin00two}. For general $I$ the proof follows the same strategy. 

\begin{Lem}
For every $I\in \cl{D}_p(K)$ and $s,t$ satisfying $\max(\vv v_I)<s<t\le 1$,
\begin{equation}\label{eq:rev mart}
\E\pp{\Delta_{s,t}^{(\eta)}(I)   \mmid\cl{N}_\epsilon}=   \Delta_{s,t}^{(\epsilon)}(I)  ~a.s.  \mfa \ 0<\eta<\epsilon< s -\max(\vv v_I).
\end{equation}
\end{Lem}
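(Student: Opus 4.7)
The plan is to reduce the identity, via Fubini, to a pointwise conditional-probability computation: I need to show that for almost every $x\in(s,t)$,
\[
\E\bb{\inddd{x\in \wt R_I\topp\eta}\mmid \calN_\epsilon}
= \pp{\eta/\epsilon}^{1-\beta_p}\inddd{x\in \wt R_I\topp\epsilon}\quad\text{a.s.}
\]
Indeed, since $\Delta_{s,t}^{(\eta)}(I)=\frac1{\Gamma(\beta_p)}(\eta/e)^{\beta_p-1}\int_s^t \inddd{x\in \wt R_I\topp\eta}\,dx$, the identity above combined with the algebraic simplification $(\eta/e)^{\beta_p-1}(\eta/\epsilon)^{1-\beta_p}=(\epsilon/e)^{\beta_p-1}$ will yield $\E[\Delta_{s,t}^{(\eta)}(I)\mid \calN_\epsilon]=\Delta_{s,t}^{(\epsilon)}(I)$.

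To establish the pointwise identity, I would use the product structure
\[
\inddd{x\in \wt R_I\topp\eta}=\prod_{i\in I}\inddd{x-v_i\in R_i\topp\eta}.
\]
Since the Poisson process $\calN$ is restricted to disjoint strips $J_i=[i-1,i-\beta)$ in its first coordinate, the random sets $R_i\topp\eta$ ($i\in I$) are independent, and $\calN_\epsilon$ decomposes accordingly. Thus the conditional expectation factorizes across $i\in I$, and for each $i$ it suffices to compute
\[
\E\bb{\inddd{x-v_i\in R_i\topp\eta}\mmid \calN_\epsilon|_{J_i}}.
\]
The event $\{x-v_i\in R_i\topp\eta\}$ asks that no point $(a,y,z)$ of $\calN$ with $a\in J_i$, $z\ge \eta$, $y<x-v_i<y+z$ exists; the points with $z\ge\epsilon$ are visible in $\calN_\epsilon$, while the points with $z\in[\eta,\epsilon)$ form an independent Poisson process. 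On the event $\{x-v_i\in R_i\topp\epsilon\}$, the conditional probability equals the (unconditional) probability of no small point, which by the Poisson formula is $\exp\pp{-(1-\beta)\int_\eta^\epsilon \int_{x-v_i-z}^{x-v_i}dy\,z^{-2}dz}$. The hypothesis $\epsilon<s-\max(\vv v_I)\le x-v_i$ ensures $x-v_i-z>0$ throughout, so the inner integral is just $z$, and the exponent reduces to $-(1-\beta)\log(\epsilon/\eta)$, giving probability $(\eta/\epsilon)^{1-\beta}$. Off the event $\{x-v_i\in R_i\topp\epsilon\}$ the conditional probability is of course $0$. Multiplying over $i\in I$ produces the factor $(\eta/\epsilon)^{p(1-\beta)}=(\eta/\epsilon)^{1-\beta_p}$, as required.

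The only substantive point requiring care is the boundary condition $x-v_i-z>0$ used in evaluating the integral, which is why the hypothesis $\epsilon<s-\max(\vv v_I)$ is made; without it, the inner integral would be $\min(z,x-v_i)$ and the clean logarithm would be lost. The measurability of the conditional expectation and applicability of Fubini are routine given the Poisson construction and the boundedness of the integrand. I expect no other obstacle.
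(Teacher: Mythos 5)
Your proposal is correct and follows essentially the same route as the paper: Fubini reduces the identity to the pointwise computation $\E[\inddd{x\in \wt R_I^{(\eta)}}\mid \calN_\epsilon]=P(x\in \wt R_I^{(\eta,\epsilon)})\inddd{x\in \wt R_I^{(\epsilon)}}$, where the points with $z\in[\eta,\epsilon)$ form an independent Poisson process whose void probability factorizes over the strips $J_i$ and equals $(\eta/\epsilon)^{1-\beta}$ per coordinate, giving $(\eta/\epsilon)^{1-\beta_p}$ overall. Your observation that $\epsilon<s-\max(\vv v_I)$ is exactly what guarantees $x-v_i>\epsilon$ and hence the clean logarithm matches the paper's "$w>\epsilon$" condition, so there is nothing to add.
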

 \begin{proof}
For $\eta\in (0,\epsilon)$, define
\[
O_i^{(\eta,\epsilon)}=\bigcup_{\ell:a_\ell\in J_i, z_\ell \in [\eta, \epsilon)}(y_\ell,y_\ell+z_\ell),\quad R_i^{(\eta,\epsilon)}= [0,\infty)\setminus O_i^{(\eta,\epsilon)},\quad i=1,\ldots,K,
\]
and define 
\[
\wt R_i\topp{\eta,\epsilon}:= R_i\topp{\eta,\epsilon}+v_i \qmand  \wt R_I\topp{\eta,\epsilon}:=\bigcap_{i\in I}\wt R_i\topp{\eta,\epsilon}, \ I \in \mathcal D_p(K). 
\]   Then   for $0<\eta<\epsilon< s -\max(\vv v_I)$,  by Fubini's theorem and the independence property of  the Poisson point process, we have
\begin{align}\label{eq:check mart}
\E\pp{\int_s^t  \inddd{x\in    \wt{R}_I^{(\eta)}}   dx \mmid\cl{N}_\epsilon} & =\int_s^t \E\pp{ \inddd{x\in    \wt{R}_I^{(\epsilon)} } \inddd{x\in    \wt{R}_I^{(\eta,\epsilon)}} \mmid  \cl{N}_\epsilon}dx\notag\\
&=\int_s^t   P\pp{x\in    \wt{R}_I^{(\eta,\epsilon)} } \inddd{x\in    \wt{R}_I^{(\epsilon)} }  dx.
\end{align}
By a calculation similar  to that in the proof of Lemma \ref{lem:1} (see also \citep[page 10]{bertoin00two}),  we have,  for $w>\epsilon$, 
\begin{align*}
P\pp{w\in    {R}_i^{(\eta,\epsilon)} }
  = \exp\pp{ -(1-\beta)  \iint  1_{\{ y <w <y+z,\ z\in [\eta,\epsilon)\}}     \frac{1}{z^2}   dzdy }=\left(\frac{\eta}{\epsilon}\right)^{1-\beta}.
\end{align*}
Hence
\[
P\pp{x\in    \wt{R}_I^{(\eta,\epsilon)} }=\prod_{i\in I} P\pp{x-v_i\in    {R}_i^{(\eta,\epsilon)} }=\left(\frac{\eta}{\epsilon}\right)^{p(1-\beta )}=\left(\frac{\eta}{\epsilon}\right)^{1-\beta_p}.
\]
Plugging this back into (\ref{eq:check mart}), we obtain (\ref{eq:rev mart}).
\end{proof}
This lemma says that $(\Delta_{s,t}\topp\epsilon (I))_{\epsilon\in(0,s-\max(\vv v_I))}$ is a martingale as $\epsilon\downarrow0$ with respect to the filtration $(\sigma(\calN_\epsilon))_{\epsilon>0}$. 
 Since the convergence of the moments of $\Delta_{s,t}^{(\epsilon)}(I)$ as $\epsilon\downarrow 0$, was established in the proof of Lemma \ref{lem:1}, 
 by the martingale convergence theorem, we have for every $0 < s < t \le1$, 
\begin{equation}\label{eq:Delta conv}
\lim_{\epsilon\downarrow 0} \Delta_{s,t}^{(\epsilon)}(I) =: \Delta^*_{s,t}(I) \mbox{ a.s. and in $L^m$ for all $m\in \N$}.
\end{equation}
Then there exists a probability-one set, on which the convergence in \eqref{eq:Delta conv} holds 
     for all $s\in \mathbb{Q}\cap (0,t)$. Since $\Delta^*_{s,t}(I)$ is non-increasing in $s\in \mathbb{Q}\cap (0,t)$, one can a.s.\ define 
\begin{equation}\label{eq:L_t(I) cover}L_{I,t}^*: =\begin{cases}
 \lim\limits_{\Q\ni s\downarrow \max(\vv v_I)} \Delta^*_{s,t}(I),  &\text{ if } \max(\vv v_I)<t,\\
 0  & \text{ if }\max(\vv v_I)\ge t.
\end{cases}
\end{equation}

\begin{Lem}\label{lem:local time identical}
For any $0 < t\le 1$, $\vv {v}\in (0,1)^K$, and any $I\in\calD_p(K)$, we have
$L_{I,t} = L^*_{I,t}$ almost surely.
\end{Lem}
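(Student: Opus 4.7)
The plan is to identify both $L_{I,t}$ and $L^*_{I,t}$ with the intrinsic local time of the random closed set $\wt R_I := \bigcap_{i\in I}(R_i+v_i)$. The case $\max(\vv v_I)\ge t$ is trivial since both quantities vanish (the latter by definition \eqref{eq:L_t(I) cover}, the former because $\wt R_I\cap [0,t]\subset [\max \vv v_I,\infty)\cap[0,t]$ is a.s.~a Lebesgue-null set with no Kingman local time), so I focus on $\max(\vv v_I)<t$. By the intersection identity \eqref{eq:decomp intersect} together with the strong Markov property for regenerative sets, $\wt R_I$ is almost surely the shift by $\tau := \inf \wt R_I$ of a $\beta_p$-stable regenerative set, with $\tau<\infty$ a.s.~under \eqref{eq:beta}. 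In particular $\wt R_I$ possesses a well-defined intrinsic local time (in the subordinator-inverse sense).

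The identification of $L_{I,t}$ with this intrinsic local time is precisely Kingman's intrinsic local time theorem \citep{kingman73intrinsic}: for a $\beta_p$-stable regenerative set, the $\limsup$ in the definition \eqref{eq:L_t} of the Kingman functional is almost surely a genuine limit and agrees pathwise with the intrinsic local time, thanks to the choice of normalization $l_{\beta_p}(n)$. Applied to $\wt R_I\cap[0,t]$, this pins down $L_{I,t}$ pathwise.

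The identification of $L^*_{I,t}$ extends Bertoin's argument in \citep{bertoin00two} from $p=1$ to $p\ge 2$. For $p=1$, Bertoin shows pathwise that $(\epsilon/e)^{\beta-1}\lambda(R^{(\epsilon)}\cap[0,t])/\Gamma(\beta)$ converges almost surely to the intrinsic local time of the $\beta$-stable regenerative set $R$. For $p\ge 2$, the covering $\wt R_I^{(\epsilon)}$ is the intersection of independent $\epsilon$-regularizations of each factor, and the exponent $\beta_p-1=p(\beta-1)$ in the normalization combines the contributions of the $p$ factors, as witnessed by the Poisson-probability calculation \eqref{e:multi.pts} underlying Lemma \ref{lem:1}. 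Combined with the backward-martingale convergence invoked in \eqref{eq:Delta conv} and the moment matching with Corollary \ref{Cor:incre moment}, this identifies $\Delta^*_{s,t}(I)$ pathwise with the intrinsic local-time increment of $\wt R_I$ on $(s,t]$; sending $s\downarrow\max(\vv v_I)$ along rationals completes the identification of $L^*_{I,t}$. Putting the two identifications together yields the desired equality $L_{I,t}=L^*_{I,t}$ a.s. The main obstacle is exactly this pathwise extension of Bertoin's argument to $p\ge 2$: the multi-dimensional Poisson covering operates at the scale of the individual $\beta$-stable factors rather than at the intrinsic $\beta_p$-stable scale of the intersection, so some care is needed to show that the independent small-gap fillings combine in a pathwise sense to recover the intrinsic local time of the intersection, and not merely a process with the correct distribution.
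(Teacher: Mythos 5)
Your plan correctly handles the trivial case and the identification of $L_{I,t}$ via Kingman's theorem (after writing $\wt R_I=R_I+V_I$ with $R_I$ a $\beta_p$-stable regenerative set), which is also how the paper starts. The gap is exactly the obstacle you name at the end and then leave unresolved: the pathwise identification of $L^*_{I,t}$ with the intrinsic local time of $\wt R_I$. Two problems arise. First, ``moment matching with Corollary \ref{Cor:incre moment}'' can only identify the \emph{law} of $\Delta^*_{s,t}(I)$, never its paths, so that step does not do what you claim. Second, the Bertoin--Pitman argument for $p=1$ is tied to the fact that $R^{(\epsilon)}$ is the canonical covering approximation of the very set whose local time one wants; for $p\ge 2$ the set $\wt R_I^{(\epsilon)}$ is an intersection of $p$ \emph{independent} $\beta$-level coverings and is not a covering-scheme approximation of the $\beta_p$-stable set $\wt R_I$ itself, so their pathwise result does not transfer, and no substitute argument is given.

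The paper closes this gap with a different mechanism that your proposal is missing. One first checks, using the moment formula of Lemma \ref{lem:1} and Kolmogorov's criterion, that $(L^*_{I,t})_{t\ge0}$ has a continuous version, is additive, and increases only on $\wt R_I$; then Maisonneuve's uniqueness theorem for continuous additive functionals of a regenerative set \citep{maisonneuve87subordinators} yields the \emph{pathwise} relation $L^*_{I,t+V_I}=c\,L_{I,t+V_I}$ a.s.\ for some deterministic constant $c>0$. This is the step that converts distributional information into a pathwise statement. The remaining work is to show $c=1$, which is itself nontrivial: one compares $\esp L_{I,1+V_I}=1/\Gamma(\beta_p+1)$ with $\esp L^*_{I,1+V_I}$, and to compute the latter one must prove the distributional identity \eqref{eq:strongMarkov}, i.e.\ $(L^*_{I,t+V_I})_{t\ge0}\eqd(L^o_{I,t})_{t\ge0}$ with all shifts set to zero, via the strong regenerative property of the coverings applied at the optional time $V_I^{(\epsilon)}$, and then read off the mean from \eqref{eq:psy}. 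Without (a) the uniqueness-of-additive-functionals argument and (b) the constant identification, your proposal asserts the conclusion rather than proving it.
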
  
\begin{proof}
First we write
\[
\wt{R}_I=\bigcap_{i\in I} (R_i+v_i)=R_I+ V_I
\] 
where
$ V_I:=\inf \wt{R}_I$ and $R_I:= (\wt{R}_I\cap [V_I,\infty))-V_I$. (Note that even with all $v_i$ fixed, $V_I$ is still a non-degenerate random variable with probability one, unless $v_i = v$ for all $i\in I$.)
In view of \citep[Lemma 3.1]{samorodnitsky17extremal},  $R_I$ is a $\beta_p$-stable regenerative set and $V_I\ge 0$ is a random shift independent of $R_I$.  Observe that $L_{I,t}=L_{I,t}^*=0$ for $t\in [0,V_{I})$, so it suffices to show $L_{I,t+V_I}= L_{I,t+V_I}^*$ for any $t\ge 0$ a.s.  By \citep[Theorem 3]{kingman73intrinsic},    $L_{I,t+V_I}=L_t(R_I)$ is a version of the standard local time of $R_I$  (or a standard $\beta_p$-Mittag--Leffler process). Here by ``standard'', we mean  that $L_t(R_I)$   has the same law as the inverse of a standard $\beta_p$-stable subordinator satisfying \eqref{eq:laplace} but with $\beta$ there replaced by $\beta_p$.
 On the other hand, 
using  Kolmogorov's   criterion \citep[Theorem 3.23]{kallenberg02foundations} and the formula of moments in Lemma \ref{lem:1} above, one can verify that  $\{L_{I,t}^*\}_{t\ge 0}$ admits a version which is continuous in $t$. It also follows from the  construction  that $L_{I,t+V_I}^*$ is additive and increases only over $t\in R _I$.   
Then by Maisonneuve \citep[Theorem 3.1]{maisonneuve87subordinators}, for some constant $c>0$,     $L_{I,t+V_I}^*= cL_{I,t+V_I}$ almost surely for each $t\ge 0$.

We shall show that $c=1$. Taking $t=1$, $\esp L_{I,1+V_I} = 1/\Gamma(\beta_p+1)$ by our knowledge of Mittag--Leffler process (e.g.~\citep[Proposition 1(a)]{bingham71limit}). Now to show $c=1$, it suffices to show that $\E L_{I,1+V_I}^* = 1/\Gamma(\beta_p+1)$.     

Let $(L_{I,t}^{o})_{t\ge 0}$ be     $(L_{I,t}^*)_{t\ge 0}$ in \eqref{eq:L_t(I) cover}  but with $\vv{v}_I=\vv 0$. From \eqref{eq:psy},  one may verify that $\E L_{I,1}^o = 1/\Gamma(\beta_p+1)$ (in fact, comparing all the moments leads to
$L_{I,1}^o \eqd L_{I,1+V_I}$).  
The proof is concluded by showing that
 \equh\label{eq:strongMarkov}
 (L_{I,t+V_I}^*)_{t\ge 0} \eqd (L_{I,t}^o)_{t\ge 0}.
 \eque
This  essentially follows from a strong regenerative  property. Indeed,
for fixed $\epsilon>0$, let 
$\cl{G}\topp\epsilon_t$, $t\ge 0$, be the augmented  filtration 
generated by the $p$-dimensional process $(D_{i,t}^{(\epsilon)},\ i\in I)_{t\ge 0}$, where $D_{i,t}\topp\epsilon=\inf ( \wt{R}_i^{(\epsilon)} \cap (t,\infty))$.  Note that for each $i\in I$, $\wt R_i\topp\epsilon = R_i\topp\epsilon+v_i$ is regenerative with respect to $(\calG\topp\epsilon_t)_{t\ge 0}$ in the sense of \citep[Definition 1.1]{fitzsimmons85intersections}: this can be seen from the fact that $R\topp\epsilon_i$ is regenerative with respect to $(\calG\topp\epsilon_{t+v_i})_{t\ge 0}$ (see e.g.~\citep[Eq.(6)]{fitzsimmons85set}).

Next, consider the shift operator $\theta_t$ on $\mathbf{F}$  as
$
\theta_t F= (F\cap [t,\infty))-t,
$
for $t\ge 0$.
Write $
V_I^{(\epsilon)}:=\inf \wt{R}_I^{(\epsilon)}$, which is finite almost surely.
Observe that $V_I\topp\epsilon = \inf\{t>0:D_{i,t-}\topp\epsilon = t, \mfa i\in I\}$, and hence it is an optional time with respect to 
$(\calG_t\topp\epsilon)_{t\ge0}$. Note in addition  that $V_I^{(\epsilon)}\in \wt{R}_i^{(\epsilon)}$ for all $i\in I$, and that $\theta_{V_I^{(\epsilon)}}\wt{R}_i^{(\epsilon)}$'s are conditionally independent given $\cl{G}\topp\epsilon_{V_I^{(\epsilon)}}$
 So it follows from the strong regenerative property (\citep[Proposition (1.4)]{fitzsimmons85intersections}) that
$
\left(\theta_{V_I^{(\epsilon)}} \wt{R}^{(\epsilon)}_i\right)_{i\in I}\overset{d}{=} \left( R_i^{(\epsilon)}\right)_{i\in I}.
$
Therefore, 
\[
\left(\int_s^t \inddd{x\in \theta_{V_I^{(\epsilon)}} \wt{R}_I^{(\epsilon)}}dx\right)_{0<s<t}  \EqD  \left(\int_s^t \inddd{x\in  R _I^{(\epsilon)}}dx\right)_{0<s<t}.
\]

Now, examining the construction starting from \eqref{eq:Delta_st}, we see that the relation above leads to \eqref{eq:strongMarkov}.
This completes the proof. 
\end{proof}

By combining all the lemmas above, it is now straightforward to complete the proof of Proposition \ref{prop:psy}. 
\begin{proof}[Proof of Proposition \ref{prop:psy}]
In view of Lemmas \ref{lem:1} and \ref{lem:local time identical}, it suffices to show that
$$
\lim_{\substack{\Q\ni s_\ell\downarrow \max \vv (v_{I_\ell})\\\ell=1,\dots,r}} \lim_{\epsilon \downarrow 0} \esp \pp{ \prod_{\ell=1}^r \Delta_{s_\ell, t_\ell}^{(\epsilon)} (I_\ell) } = \esp \pp{\prodd \ell 1rL_{I_\ell,t_\ell}^*}.
$$
By the $L^m$ convergence in \eqref{eq:Delta conv}, 
$$
\lim_{\epsilon \downarrow 0} \esp \pp{ \prod_{\ell=1}^r \Delta_{s_\ell, t_\ell}^{(\epsilon)} (I_\ell) } = \esp \pp{ \prod_{\ell=1}^r \Delta_{s_\ell, t_\ell}^*(I_\ell) }.
$$
It then remains to show that
\[
\lim_{\substack{\Q\ni s_\ell\downarrow \max \vv (v_{I_\ell})\\\ell=1,\dots,r}}\esp \pp{\prodd \ell 1r\Delta^*_{s_\ell,t_\ell}(I_\ell)} = \esp \pp{\prodd \ell 1rL_{I_\ell,t_\ell}^*},
\]
for which we have established the pointwise convergence in \eqref{eq:L_t(I) cover}. To enhance to the convergence in expectation via uniform integrability, we need a uniform upper bound  for $\esp (\prodd\ell1r \Delta_{s_\ell,t_\ell}^*(I_\ell)^2)$ in terms of $\vv s$. This follows from a  reexamination of     \eqref{eq:moment epsilon zero}.  
The proof is then completed.
\end{proof}

\section{Stable-regenerative multiple-stable processes}
\label{sec:limit process}
 \subsection{Series representations for multiple integrals}\label{sec:mult int}  
We review the the multilinear series representation of off-diagonal multiple integrals with respect to an infinitely divisible random measure without a 
Gaussian
 component. Our main reference is Szulga \citep{szulga91multiple} and Samorodnitsky \citep[Chapter 3]{samorodnitsky16stochastic}.

 Let $(E,\cl{E},\mu)$ be 
 a
 measure space where $\mu$ is $\sigma$-finite and atomless. 
First we recall the infinitely divisible random measure without Gaussian component. Let $M(\cdot)$ be such a random measure with a   control measure $\mu$. Then, its law is determined by
\[
\E e^{i\theta M(A)}=\exp\left(-\mu(A) \int_{\R  } (1-\cos(\theta y))  \rho(dy)\right),~ A\in \cl{E}, ~\mu(A)<\infty,~ \theta\in \R,
\]  
where $\rho$ is a \emph{symmetric} L\'evy measure  satisfying $\int_{\R} (1\wedge y^2) \rho(dy)\in (0,\infty)$ \citep[Section 3.2]{samorodnitsky16stochastic}.
We shall later on need a generalized inverse of the tail L\'evy measure defined as
\[
\rho^{\leftarrow}(y):=\inf\{x> 0: \rho(x,\infty)\le y/2\},\quad y>0.
\]

A special  case
of
 our interest is the symmetric $\alpha$-stable (S$\alpha$S) random measure on $(E,\calE)$, denoted by $S_\alpha$ ($\alpha\in(0,2)$), determined by
$\E e^{iu S_\alpha(A)}= \exp(-|u|^\alpha \mu(A))$ for all $A\in \cl{E}, \mu(A)<\infty$. 
In this case, the L\'evy measure is
\begin{equation}\label{eq:C_alpha}
\rho(dy)= \frac{\alpha C_\alpha}{2} |y|^{-\alpha-1}1_{\{y\neq 0\}}dy \qmwith
C_\alpha=\left(\int_0^\infty \sin(y) y^{-\alpha} dy\right)^{-1},
\end{equation}
and
$\rho^{\leftarrow}(y)=C_\alpha^{1/\alpha} y^{-1/\alpha}$, $y>0$.
Throughout we shall work with the following assumption for $\rho$: 
\equh\label{eq:rho}
\rho((x,\infty))\in \RV_\infty(-\alpha), \alpha\in(0,2)  \text{ and } \rho((x,\infty)) = O(x^{-\alpha_0}),\ \alpha_0<2 \mmas x\downarrow0,
\eque
where $\RV_\infty(-\alpha)$ denotes the class of functions regularly varying with index $-\alpha$ at infinity (\cite{bingham87regular}).

Now we introduce the series representations for   multiple integrals with respect to $M$. 
When working with series representations,
we
 shall always  treat integrands supported within a finite-measure   subspace of $E^p$. In particular,  fix an index set $\T$ 
and suppose $(f_t)_{t\in \T}$ is a family of  product measurable symmetric functions from $E^p$ to $\R$, such that $\cup_{t\in \T} \supp(f_t) \subset B^p$ for some $B\in\calE$ with $\mu(B)\in(0,\infty)$, where $\supp(f_t):=\{x\in E^p:~f_t(x)\neq 0\}$.

Now let $(\varepsilon_i)_{i\in\N}$ be i.i.d.~Rademacher random variables, $(\Gamma_i)_{i\in\N}$ be consecutive arrival times of a standard Poisson process, and  $( U_i)_{i\in\N}$ be i.i.d.~random elements taking 
values
 in $E$ with   distribution $\mu(\cdot \cap B)/\mu(B)$,  all assumed to be independent. 
Then for every $A\in\cl{E}$ with $A\subset B$, the series $
M_0(A):=\sum_{i=1}^\infty \varepsilon_i \rho^{\leftarrow}(\Gamma_i/\mu(B)) \delta_{U_i}(A) 
$
converges a.s.\ and $M_0\EqD M$ (\citep[Theorem 3.4.3]{samorodnitsky16stochastic}, see also \citep{rosinski99product}). Without loss of generality  we shall make the identification $M=M_0$.  Then the (off-diagonal) multiple integral of $f_t$ with respect to $M$ can be defined as
\begin{align} \label{eq:series rep}
&\left(\int_{B^p}'  f_t(x_1,\ldots,x_p) M(dx_1)\cdots M(dx_p)\right)_{t\in \T}\\ \notag   
&=\left( p!\sum_{ I \in \cl{D}_p} \left(\prod_{i\in I}\varepsilon_i  \rho^{\leftarrow}(\Gamma_i/\mu(B))\right)  f_t(\vv U_I)\right)_{t\in \T},
\end{align}
where  
\[
\vv U_{I}\equiv(U_{i_1},\ldots,U_{i_p}) \mbox{ for } I =(i_1,\dots,i_p)\in \cl D_p,
\]
as long as the   multilinear series in \eqref{eq:series rep} converges a.s. It is known that  the convergence   holds if and only if
\begin{equation}\label{eq:series finite}
\sum_{ I\in\cl{D}_p }  \prod_{i\in I}   \rho^{\leftarrow}(\Gamma_i/\mu(B)))^2  f_t(\vv U_I)^2<\infty \quad \mbox{ a.s.,}
\end{equation} 
and in this case the convergence also holds unconditionally, namely, regardless of any deterministic permutation of its entries (\citep{kwapien92random} 
 and \citep[Remark 1.5]{samorodnitsky89asymptotic}). 
On the other hand, a non-symmetric integrand, say $g$, can always be symmetrized without affecting the resulting multiple stochastic integral, by considering $ (p!)\inv\sum_\sigma g(x_{\sigma(1)},\dots,x_{\sigma(p)})$, summing over all permutations of $\{1,\dots,p\}$.  

The following lemma provides a condition to verify the convergence under \eqref{eq:rho}.
\begin{Lem}\label{lem:A1} Let $(\varepsilon_i)_{i\in\N}$ and $(\Gamma_i)_{i\in\N}$ be as 
above and let $f:E^p\rightarrow \R$ be a measurable symmetric function.
 For every $p\in\N, c>0$, 
\[
\sum_{ I\in\cl{D}_p } \left(\prod_{i\in I}\varepsilon_i  \rho^\leftarrow(\Gamma_i/c)\right)  f(\vv U_I) 
\]
converges almost surely and unconditionally, if 
  $\esp f(\vv U_I)^2<\infty$. 
\end{Lem}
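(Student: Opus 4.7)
The strategy is to verify the a.s.\ finiteness of the sum-of-squares criterion \eqref{eq:series finite} (with $\mu(B)$ replaced by $c$), which by the discussion preceding the lemma is equivalent to the almost sure convergence of the multilinear series in question and further implies its unconditional convergence. Concretely, the target is
\[
\Sigma:=\sum_{I\in\cl{D}_p} \prod_{i\in I} \rho^\leftarrow(\Gamma_i/c)^2 \, f(\vv U_I)^2 <\infty\quad\mbox{a.s.}
\]

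The first step is to condition on $\calG:=\sigma((\Gamma_i)_{i\in\N})$. Since $(U_i)_{i\in\N}$ is independent of $(\Gamma_i)_{i\in\N}$ and the $U_i$'s are i.i.d., the quantity $\esp f(\vv U_I)^2 = \esp f(U_1,\ldots,U_p)^2 =: \sigma^2$ does not depend on $I$, and $\sigma^2<\infty$ by hypothesis. Tonelli's theorem then yields
\[
\esp\pp{\Sigma \mmid \calG} = \sigma^2 \sum_{I\in \cl{D}_p}\prod_{i\in I}\rho^\leftarrow(\Gamma_i/c)^2 = \sigma^2\, e_p\pp{\rho^\leftarrow(\Gamma_1/c)^2, \rho^\leftarrow(\Gamma_2/c)^2,\ldots},
\]
where $e_p$ denotes the elementary symmetric polynomial of degree $p$. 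Applying the elementary bound $e_p(a_1,a_2,\ldots)\le \frac{1}{p!}\pp{\sum_i a_i}^p$ reduces the problem to showing
\[
S:=\sum_{i=1}^\infty \rho^\leftarrow(\Gamma_i/c)^2 <\infty \quad \mbox{a.s.,}
\]
since then $\esp(\Sigma\mid \calG)<\infty$ a.s., forcing the non-negative $\Sigma$ to be a.s.\ finite.

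The a.s.\ convergence of $S$ is where the substantive input enters. I would exploit the second clause of \eqref{eq:rho}: from $\rho((x,\infty)) = O(x^{-\alpha_0})$ as $x\downarrow 0$, a direct reading of the definition of the generalized inverse produces constants $C,y_0>0$ with $\rho^\leftarrow(y) \le C y^{-1/\alpha_0}$ for all $y\ge y_0$. Combining this with $\Gamma_i/i\to 1$ a.s.\ (strong law of large numbers), one obtains $\rho^\leftarrow(\Gamma_i/c)^2 \le C' i^{-2/\alpha_0}$ almost surely for all sufficiently large $i$; since $2/\alpha_0>1$ by the hypothesis $\alpha_0<2$, the tail sum converges. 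The finitely many initial terms are a.s.\ finite because $\rho^\leftarrow(y)<\infty$ for every $y>0$, which closes the argument.

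The main (and essentially only) obstacle is justifying the pathwise tail bound $\rho^\leftarrow(y)\le C y^{-1/\alpha_0}$ from the small-$x$ behavior of $\rho((x,\infty))$; everything else is Fubini and a symmetric-function inequality. It is worth noting that the regular-variation index $\alpha$ appearing in the first part of \eqref{eq:rho} plays no explicit role here: it controls $\rho^\leftarrow$ only near $0$, i.e., for the handful of smallest $\Gamma_i$'s, where finiteness is automatic. The tail summability is entirely governed by the small-jump condition $\alpha_0<2$.
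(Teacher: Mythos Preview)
Your argument is correct. Conditioning on $(\Gamma_i)_{i\in\N}$, exploiting the i.i.d.\ structure of the $U_i$'s so that $\esp f(\vv U_I)^2$ is constant in $I$, and then bounding the degree-$p$ elementary symmetric sum by $(p!)^{-1}\bigl(\sum_i \rho^\leftarrow(\Gamma_i/c)^2\bigr)^p$ reduces the criterion to a single pathwise series; the tail bound $\rho^\leftarrow(y)\le C y^{-1/\alpha_0}$ for large $y$ indeed follows from the small-$x$ hypothesis on $\rho((x,\infty))$, and $\Gamma_i/i\to1$ a.s.\ finishes it. Your remark that the regular-variation index $\alpha$ plays no role in this lemma is also accurate.

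The paper takes a different route: it splits each $I\in\cl{D}_p$ into a ``low'' part $I_1$ with indices $\le M$ and a ``high'' part $I_2$ with indices $>M$, then for each of the finitely many $I_1$ it takes an \emph{unconditional} expectation over the $\Gamma_i$'s in $I_2$ using the moment estimate $\esp\prod_{i\in I_2}\Gamma_i^{-\delta}\le C\prod_{i\in I_2} i^{-\delta}$ (valid once $\min I_2>\delta|I_2|$), together with a global pointwise bound $\rho^\leftarrow(x)\le C(x^{-1/\alpha_0}+x^{-1/\alpha-\epsilon})$ for all $x>0$. Your approach is shorter and more elementary, avoiding both the $M$-splitting and the $\Gamma$-moment estimate. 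The paper's heavier machinery is not wasted, though: the moment bound on $\prod_{i\in I_2}\Gamma_i^{-\delta}$ and the low/high decomposition are reused verbatim later in the proofs of \eqref{eq:Rn} and of tightness (Section~\ref{sec:pf tight}), where one needs uniform-in-$n$ control of expectations rather than a.s.\ finiteness, and your pathwise SLLN argument would not suffice there.
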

\begin{proof}
It suffices to prove for $c=1$, and in this case the convergence criterion \eqref{eq:series finite} becomes
\begin{equation}\label{eq:series finite lemma}
\sum_{ I\in\cl{D}_p }  \prod_{i\in I}  \rho^\leftarrow(\Gamma_i)^2  f(\vv U_I)^2<\infty \quad 
\mbox{ a.s.}
\end{equation} 
Define
\begin{align}\label{eq:D le p M}
\cl{D}_{\le p}(M) & :=\{I\in \calD_k: \ 0\le k\le p,\ \max I\le M\},\\
\label{eq:H(k,M)}
\cl{H}(k,M) & :=\{I\in \cl{D}_k:\ \min I> M\},~ k=0,\dots,p,
\end{align}
for $M\in\N$, to be chosen later, where $\calD_k$ is as in \eqref{eq:D_p} with  $\calD_0=\emptyset$.
Then the series in (\ref{eq:series finite lemma}) is equal to
\begin{align}\label{eq:series low+high}
 \sum_{I_1\in  \cl{D}_{\le p}(M)}   \left( \prod_{i\in I_1} \rho^\leftarrow(\Gamma_i)^2\right) \left[ \sum_{I_2\in \cl{H}(p-|I_1|,M)}\left( \prod_{i\in I_2} \rho^\leftarrow(\Gamma_i)^2 \right)f(\vv U_{I_1\cup I_2})^2\right]. 
\end{align}
 Note that $\cl{D}_{\le p}(M)$ is finite. Hence
to prove the almost-sure convergence of the non-negative series, it suffices to show   that for each $I_1\in \cl{D}_{\le p}(M)$, the term in the bracket of (\ref{eq:series low+high})  is finite almost surely. 
This follows, in view of \eqref{eq:series finite}, if  we can show that
\[ \sum_{I_2\in \cl{H}(k,M)}  \E\left(\prod_{i\in I_2}  \rho^\leftarrow(\Gamma_i)^2\right) \E f(\vv U_{I_1\cup I_2})^2  <\infty,~k=1,\ldots,p.
\]
From assumption \eqref{eq:rho}, it follows that $\rho^\leftarrow(x)\in\RV_0(-1/\alpha)$, where the latter denotes the class of functions regularly varying at zero, and $\rho^\leftarrow(x)= O(x^{-1/\alpha_0})$ as $x\to\infty$. By Potter's bound and the fact that $\rho^\leftarrow$ is monotone, it then follows that there exists $C>0$ and $\epsilon>0$ such that
\[
\rho^\leftarrow(x)\le C\pp{x^{-1/\alpha_0}+x^{-(1/\alpha)-\epsilon}}, \mfa x>0.
\]
The following estimate  can be obtained via H\"older's inequality    as in \citep[Eq.(3.2)]{samorodnitsky89asymptotic}:
given $\delta>0$, there exists a constant $C>0$, such that  
\begin{equation}\label{eq:gamma estimate}
\E \pp{\prod_{i\in I_2}\Gamma_i^{-\delta} }\le  C  \prod_{i\in I_2}i^{-\delta} \quad \mfa I_2 = (i_1,\dots,i_k)\in \calD_k \mwith i_1>\delta k.
\end{equation} 
It then follows that for all $\delta_1,\delta_2>0$,
\[
\esp\pp{\prod_{i\in I_2}(\Gamma_i^{-\delta_1}+\Gamma_i^{-\delta_2})}\le C\prod_{i\in I_2} i^{-(\delta_1\wedge \delta_2)}  ~
\mbox{ for all $I_2\in \calD_k$ s.t.} \min I_2>(\delta_1\vee \delta_2)k.
\]
Therefore, taking $M>2p\max\{1/\alpha_0,(1/\alpha +\epsilon)\}$ and $\alpha^*:=((1/\alpha)+\epsilon)\wedge1/\alpha_0>1/2$ we have, 
\begin{align*}
\sum_{I\in \cl{H}(k,M)} \E\pp{ \prod_{i\in I} \rho^\leftarrow(\Gamma_i)^2}
&\le  C\sum_{I\in \calH(k,M)}\prod_{i\in I } i^{-2\alpha^*} \\
& \le C\sum_{I\in \calD_k}\prod_{i\in I}i^{-2\alpha^*} 
\le C\left(\sum_{i=1}^\infty i^{-2\alpha^*} \right)^k<\infty.\nonumber
\end{align*}
\end{proof}

\subsection{Stable-regenerative multiple-stable process}\label{sec:limit process def}
Recall our assumption on $p,\beta$ and $\beta_p$ in \eqref{eq:beta}, and the local-time functional $L_t$ in \eqref{eq:L_t}. 
We introduce the {\em stable-regenerative multiple-stable process  of multiplicity $p$}, denoted throughout by $\Zab \equiv (\Zab(t))_{t\ge 0}$, $\alpha\in(0,2)$, via the multiple integrals:
 \equh\label{eq:Zt}
Z_{\alpha,\beta,p}(t):=\int_{(\vF \times [0,\infty))^p }' L_t\pp{\bigcap_{i=1}^p (R_i+v_i)}  S_{\alpha,\beta}(d R_1,dv_1)\cdots  S_{\alpha,\beta}(dR_p,dv_p),\ t\ge 0.
\eque
where $S_{\alpha,\beta}(\cdot)$ is a  S$\alpha$S random measure on $ \vF \times [0,\infty) $  with control measure 
$P_\beta\times (1-\beta) v^{-\beta}  dv$.   {Note that when $p=1$, the process $\Zab$ is represented as a stable integral, and in particular, is the same process known as the {\em $\beta$-Mittag--Leffler fractional S$\alpha$S motion} introduced in \citep{owada15functional}. 
The well-definedness of the  multiple integral above when $t\in [0,1]$ directly follows from Lemma \ref{lem:A1} and Theorem \ref{thm:1}, and can be similarly verified for $t>1$ by a proper scaling. 
More specifically, if $t\in[0,1]$, using the fact that  $L_t$ vanishes when any $v_i>1$ in \eqref{eq:Zt}, the process $ \Zab(t) $ can be represented in the form of \eqref{eq:Zt}, with $\vF\times[0,\infty)$ replaced by $\vF\times[0,1]$, and the control measure replaced by 
a probability measure $P_\beta\times  (1-\beta)   v^{-\beta}\inddd{v\in[0,1]}dv$.
Then,  as in \eqref{eq:series rep}, one can obtain the series representation
\begin{equation}\label{eq:Z_t [0,1]}
\left(\Zab(t)\right)_{t\in [0,1]}
\overset{f.d.d.}{=}\left(p!  C_\alpha^{p/\alpha} \sum_{I\in \cl{D}_p}\left(\prod_{i\in I}  \varepsilon_i \Gamma_i^{-1/\alpha } \right) L_t\left(\bigcap_{i\in I}  (R_i+V_i)\right)\right)_{t\in [0,1]},
\end{equation}
where $f.d.d.$ stands for finite-dimensional distributions, $C_\alpha$  is as in \eqref{eq:C_alpha}, $(\varepsilon_i)_{i\in\N}, (\Gamma_i)_{i\in\N}$   are as in Section  \ref{sec:mult int},
 $(R_i)_{i\in\N}$ are i.i.d.~$\beta$-stable regenerative sets, $(V_i)_{i\in\N}$ are i.i.d.~random variables with law \eqref{eq:V}, and the four sequences are independent from each other.   

 As a direct consequence of the functional limit theorem proved in Theorem \ref{Thm:CLT} below and   Lamperti's theorem \citep{lamperti62semi}, the process $\Zab$ turns out to be self-similar with Hurst index 
\[
H= \beta_p+\frac{1-\beta_p}\alpha =p\pp{\frac1\alpha-1}(1-\beta)+1 \in (1/2,\infty),
\] 
that is, 
\[
(\Zab(ct))_{t\ge 0} \eqd c^H( \Zab(t))_{t\ge 0} \mfa c>0,
\]
and have stationary increments.   
In view of self-similarity, we shall only work with $(\Zab(t))_{t\in[0,1]}$   onward.  

 We conclude this section with a result on the path regularity of $\Zab$. 

 \begin{Pro}
 The process $\Zab$  admits a continuous version whose path is locally $\delta$-H\"older continuous a.s.\  for any $\delta\in (0,\beta_p)$. 
\end{Pro}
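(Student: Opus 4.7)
The plan is to apply Kolmogorov's continuity criterion. As noted immediately before the proposition, $\Zab$ is $H$-self-similar with $H = \beta_p + (1-\beta_p)/\alpha$ and has stationary increments; the task is to bound $\E|\Zab(t) - \Zab(s)|^q$ by a suitable power of $(t-s)$.

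By the series representation \eqref{eq:Z_t [0,1]}, the increment $\Zab(t) - \Zab(s)$ is itself an off-diagonal multiple $\alpha$-stable integral whose integrand, viewed as a function on $(\vF \times [0, \infty))^p$, is $g_{s,t}(R_1, v_1, \ldots, R_p, v_p) := L_t(\bigcap_i(R_i + v_i)) - L_s(\bigcap_i(R_i + v_i))$. The standard moment bound for multiple $\alpha$-stable integrals yields $\E|\Zab(t) - \Zab(s)|^q \le C_q \|g_{s,t}\|_{L^\alpha(\mu^{\otimes p})}^q$ for $q \in (0, \alpha)$, where $\mu = P_\beta \times (1-\beta)v^{-\beta}dv$ is the control measure. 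To bound the $L^\alpha$-norm, I would apply Corollary \ref{Cor:incre moment}: the integer moment formula $\E(L_{I,t} - L_{I,s})^r = C_r (t-s)^{(r-1)\beta_p + 1}$, combined with Lyapunov's inequality in the limit $r \to \infty$ and integrability of $v_i^{-\beta}$ over $[0, 1]$ (ensured by $\beta < 1$), yields $\|g_{s,t}\|_{L^\alpha(\mu^{\otimes p})} \le C(t-s)^{\beta_p}$, so $\E|\Zab(t) - \Zab(s)|^q \le C_q (t-s)^{q\beta_p}$ for $q \in (0, \alpha)$.

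Kolmogorov's continuity criterion then produces a continuous modification of $\Zab$ with local H\"older exponent $\delta < \beta_p - 1/q$ for any $q \in (1/\beta_p, \alpha)$. Closing the remaining gap to $\delta < \beta_p$ is the main obstacle: because the multiple stable integral has no moments of order $\ge \alpha$, $q$ cannot be sent to $\infty$ in Kolmogorov directly. A plausible route is a truncation of the series \eqref{eq:Z_t [0,1]}: each individual Mittag--Leffler-type component $L_{I, \cdot}$ is pathwise $\beta_p$-H\"older continuous a.s.\ (by Corollary \ref{Cor:incre moment} and Kolmogorov applied to $L$), so a fixed-$N$ ``head'' of the series inherits $\beta_p$-H\"older regularity termwise, while the tail is controlled by the above moment bound; balancing $N$ against the length scale $(t-s)$ should recover the full H\"older range.
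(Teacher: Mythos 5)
There is a genuine gap, and it sits exactly where you flag it. Your direct Kolmogorov step already breaks down: the moment bound for the multiple stable integral is only available for $q<\alpha$, while Kolmogorov's criterion needs $q\beta_p>1$, so the interval $(1/\beta_p,\alpha)$ you invoke is \emph{empty} whenever $\alpha\beta_p\le 1$ (for instance any $\alpha\le 1$), in which case the argument yields no continuity at all; even when it is nonempty you only reach $\delta<\beta_p-1/\alpha$, not $\delta<\beta_p$. (In addition, the inequality $\E|\Zab(t)-\Zab(s)|^q\le C_q\|g_{s,t}\|_{L^\alpha}^q$ is not a standard fact for multiple integrals with $p\ge 2$: their tails carry logarithmic corrections and the relevant quasi-norm is not simply the $L^\alpha$-norm of the integrand, so this step would itself require proof.) The proposed repair by truncating the series at a level $N$ tied to the scale $t-s$ is only a heuristic: the tail over $\{I\in\calD_p:\ \max I>N\}$ still has only moments of order $q<\alpha$ and so suffers the same exponent deficit (or gives nothing when $\alpha\beta_p\le1$), and the pathwise H\"older constants of the successive heads are not controlled uniformly in $N$, so no chaining or Borel--Cantelli argument is actually in place.

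The missing idea, which is how the paper closes the gap, is to condition on everything except the Rademacher signs. Writing the probability space as a product with $(\varepsilon_i)$ on $\Omega_1$ and $(\Gamma_i,R_i,V_i)$ on $\Omega_2$, the series \eqref{eq:Z_t [0,1]} is, conditionally on $\omega_2$, a multilinear form in Rademacher variables and hence has moments of \emph{all} orders: the Khinchine-type inequality of \citep{krakowiak86random} gives, for every $r>1$, $\E_1|\Zab(t)-\Zab(s)|^r\le C\pp{\sum_{I\in\calD_p}\prod_{i\in I}\Gamma_i^{-2/\alpha}\,|L_{I,t}-L_{I,s}|^2}^{r/2}$. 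One then uses not the moment formulas of Corollary \ref{Cor:incre moment} but the a.s.\ pathwise modulus of the shifted Mittag--Leffler processes, $|L_{I,t}-L_{I,s}|\le K_I\,(t-s)^{\beta_p}|\log(t-s)|^{1-\beta_p}$ with $K_I$ having all moments \citep[Lemma 3.4]{owada15functional}, so that $\E_1|\Zab(t)-\Zab(s)|^r\le C(t-s)^{r\beta_p}|\log(t-s)|^{r(1-\beta_p)}M(\omega_2)$ with $M(\omega_2)<\infty$ $P_2$-a.s.\ (by the summability argument behind Lemma \ref{lem:A1}). Since $r$ may be taken arbitrarily large, Kolmogorov's criterion applied under $P_1$ for $P_2$-a.e.\ $\omega_2$, followed by Fubini, gives local $\delta$-H\"older continuity for every $\delta\in(0,\beta_p)$; this conditioning device is what your outline lacks.
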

\begin{proof}
We restrict $t\in [0,1]$ without loss of generality and work with the series representation \eqref{eq:Z_t [0,1]}.
  In view of independence,
assume for convenience that the underlying probability space is the product space of $(\Omega_i,\filF_i, P_i), i=1,2$, where $(\epsilon_i)_{i\in\N}$  depends  only on $\omega_1\in \Omega_1$  and $(\Gamma_i, R_i, V_i)_{i\in\N}$ depends  only on $\omega_2\in \Omega_2$. The probability measures $P_1$ and $P_2$ are  such that those random variables   have the desired law, and $P$ is the product measure of $P_1$ and $P_2$ on the product space.   We also write $\E_i$ the integration with respect to $P_i$ over $\Omega_i$, $i=1,2$, 

We shall work with the  series representation   in \eqref{eq:Z_t [0,1]}, where without loss of generality we replace $\overset{f.d.d.}{=}$ with $=$. Then as before, write
$L_{I,t} = L_t(\bigcap_{i\in I}  (R_i+V_i))$. Since $L_{I,t}(\omega_1,\omega_2)$ is a constant function of $\omega_1$ with $\omega_2,I,t$ fixed,   we write $L_{I,t}(\omega_1,\omega_2) = L_{I,t}(\omega_2)$ for the sake of simplicity.  In addition,  we shall identify $L_{I,t}$ with its continuous version, which exists in view of  Corollary \ref{Cor:incre moment} and Kolmogorov's criterion. 

 Using a generalized Khinchine inequality for multilinear forms in Rademacher random variables (\citep{krakowiak86random}, see also \citep[Theorem 1.3 (ii)]{samorodnitsky89asymptotic}),  for any $r>1$ and some constant $C>0$, we have for $0\le s<t\le 1$ that, writing $\vv\omega = (\omega_1,\omega_2)$,
\[
\E_1 |\Zab(t)(\vv\omega)-\Zab(s)(\vv\omega)|^r \le CY_{s,t}(\omega_2)
\]
with 
\[
Y_{s,t}(\omega_2) := \pp{\sum_{I\in\calD_p} \left(\prod_{i\in I}   \Gamma_i(\omega_2)^{-2/\alpha } \right) \left|L_{I,t}(\omega_2)-L_{I,s}(\omega_2)\right|^2}^{r/2}, 0\le s<t\le 1.
\]
The two-parameter process $(Y_{s,t})_{0\le s<t\le 1}$ is finite $P_2$-almost surely in view of Lemma  \ref{lem:A1} and Corollary \ref{Cor:incre moment} (note that \eqref{eq:rho} is satisfied with $\alpha
=\alpha_0$ in this case).
Since $L_{I,t}$ is     a shifted $\beta_p$-Mittag--Leffer process,  in view of  \citep[Lemma 3.4]{owada15functional},  the random variable
\[
K_I(\omega_2):=\sup_{(s,t)\in D} \frac{|L_{I,t}(\omega_2)-L_{I,s}(\omega_2)|}{(t-s)^{\beta_p}|\log(t-s)|^{1-\beta_p}}
\]
is $P_2$-a.s.\ finite, and has finite moments of all orders,  where $D=\{(s,t): \ 0\le s<t\le 1,\ t-s<1/2 \}$.
Hence for all $(s,t)\in D$, we have
\[
\E_1 |\Zab(t)(\vv\omega)-\Zab(s)(\vv\omega)|^r\le  C(t-s)^{r\beta_p}|\log(t-s)|^{r(1-\beta_p)}  M(\omega_2),
\]
where
\[
M(\omega_2) = \pp{\sum_{I\in\calD_p} \left(\prod_{i\in I}   \Gamma_i(\omega_2)^{-2/\alpha } \right) K_I(\omega_2)^2}^{r/2},
\]
which is finite $P_2$-a.s.: this is a special case of \eqref{eq:series finite lemma}, addressed in the proof of Lemma \ref{lem:A1}.
Take $r$ large enough so that  $r\beta_p>1$.   Then by   Kolmogorov's criterion, for any $\delta\in(0,\beta_p)$ and  $P_2$-a.e.\ $\omega_2\in \Omega_2$,  $\Zab(t)(\cdot,\omega_2)$ admits a version     $\Zab^*(t)(\cdot,\omega_2)$  under $P_1$  whose path is locally $\delta$-H\"older continuous $P_1$-a.s. By Fubini, $\Zab^*(t)(\vv \omega)$ is also a version of   $\Zab(t)(\vv\omega)$  under $P_1\times P_2$ which has a locally $\delta$-H\"older continuous path $(P_1\times P_2)$-a.s.
\end{proof}
\section{A functional non-central limit theorem}\label{sec:nCLT}
\subsection{Infinite ergodic theory and Krickeberg's setup}\label{sec:ergodic}
We shall introduce some concepts in the infinite ergodic theory necessary for the formulation of our   results.   
Our main reference is Aaronson \citep{aaronson97introduction}.
Let $(E,\cl{E},\mu)$ be a measure space where $\mu$ is a $\sigma$-finite measure satisfying $\mu(E)=\infty$. Suppose that $T: E\rightarrow E$ is a measure-preserving transform, namely, $T$ is measurable and $\mu(T^{-1}B)=\mu(B)$ for all $B\in \cl{E}$. 
Let $\what T$ denote the {\em dual} (a.k.a.~Perron--Frobenius, or transfer) operator of $T$, defined by
\[
\wh{T}: L^1(\mu)\rightarrow L^1(\mu),\quad  \wh{T} g := \frac{d\mu_g  \circ T^{-1} }{d\mu},
\]
where $\mu_g(B)=\int_B g d\mu$, $B\in \cl{E}$. It is also characterized by the relation  
\begin{equation}\label{eq:dual}
\int_E (\wh{T} g) \cdot h d\mu = \int_E    g \cdot  (h\circ T) d\mu, \quad \mfa g\in L^1(\mu),~h\in L^\infty(\mu).
\end{equation}
We always assume that  $T$ is \emph{ergodic}, namely, $T^{-1} B= B$ mod $\mu$ implies either $\mu(B)=0$ or $\mu(B^c)=0$, and that $T$ is \emph{conservative}, namely, for any $B\in \cl{E}$ with $\mu(B)>0$, we have
$
\sum_{k=1}^\infty 1_B(T^k x)=\infty$ for a.e.~$x\in B$.
It is known that $T$ is ergodic and conservative,  if and only if  for any  $B\in \cl{E}$   with $\mu(B)>0$, we have
\[
\sum_{k=1}^\infty 1_{B}(T^k x)=\infty  \quad \text{ for a.e. }x\in E,
\]or equivalently
\begin{equation}\label{eq:erg and cons dual}
\sum_{k=1}^\infty \wh{T}^k g =\infty  \quad \mbox{ a.e. for all } g\in L^1(\mu),~ g\ge 0,
\mbox{ a.e.~and } \mu(g)>0.
\end{equation}
We shall, however, need a more quantitative description of the ergodic property of $T$, which provides information about the rate of divergence in \eqref{eq:erg and cons dual}.  The following assumption is formulated in the spirits of Krickeberg \citep{krickeberg67strong} and Kesseb\"ohmer and Slassi \citep{kessebohmer07limit}. We shall use the following convention throughout: \emph{any function defined on a subspace 
(e.g.~$A$)  will be  extended  to the full space (e.g.~$E$) by assuming zero value outside the subspace, whenever  necessary}. 
 
\begin{assump}\label{assump}
There exists  $A\in\cl{E}$ with $\mu(A)\in (0,\infty)$  and  $A$ is a Polish space  with $\cl{E}_A:=\cl{E}\cap A$ being its  Borel $\sigma$-field.  In addition,  there exists a positive  rate sequence  $  (b_n)_{n\in\N}$ satisfying  
\equh\label{eq:bn_RV}
(b_n)\in \RV_{\infty}(1-\beta),\quad \beta\in (0,1),
\eque
where $\RV_\infty(1-\beta)$ denotes the class of sequences regularly varying with index $1-\beta$ at infinity (\cite{bingham87regular}),
so that
\begin{equation}\label{eq:uniform ret}
\limn b_n \wh{T}^n  g(x)=  \mu(g) \quad \text{uniformly for a.e. }x\in A 
\end{equation}
for  
 all  bounded  and $\mu$-a.e.\ continuous $g$   on $A$.
\end{assump}
\begin{Rem}
The relation \eqref{eq:uniform ret} was first explicitly formulated in \cite{kessebohmer07limit} and termed as the \emph{uniform return} condition. Due to the existence of weakly wandering sets (\cite{hajian64weakly}), the   relation \eqref{eq:uniform ret}  can fail even for a  bounded integrable function $g$ supported within $A$. To be able to treat a large family of integrands $f$ in Theorem \ref{Thm:CLT} below,  we adopt an idea of \cite{krickeberg67strong}: we impose  a topological structure on the subspace $A$, and  retrain our attention to bounded and a.e.\ continuous functions supported within $A$.  It is worth noting  the resemblance of this approach to the theory of weak convergence of measures.    See Section \ref{sec:Eg} below for examples satisfying Assumption \ref{assump}.
\end{Rem}
\begin{Rem} 
Assumption \ref{assump} has an   alternative characterization in Proposition \ref{Pro:unif ret} below.  Typically, the whole space $E$ is   Polish as well.  Nevertheless, we   stress that  when   a topological concept  such as continuity, interior or boundary is mentioned, we solely refer to  the Polish topology  on the subspace $A$ (or $A^p$   in the context of product space). 
\end{Rem}

Additionally, for $A$ in Assumption \ref{assump},  and $x\in E$, we define the \emph{first entrance time}  
\begin{equation}\label{eq:phi}
\varphi(x)=\varphi_A(x)=\inf\{k\ge 1:\ T^k x\in A \},
\end{equation}
and the \emph{wandering rate} sequence 
\begin{equation}\label{eq:w_n}
w_n=\mu(\varphi \le n)=\mu\left(\bigcup_{k=1}^n 
T^{-k}
 A\right), \ \ n\in\N,
\end{equation} 
which measures  the amount of $E$ which visits $A$ up to time $n$. 
Kesseb\"ohmer and Slassi \citep[Proposition 3.1]{kessebohmer07limit} 
proved
 that under Assumption \ref{assump},
\begin{equation}\label{eq:b_n w_n}
b_n\sim \Gamma(\beta)\Gamma(2-\beta) w_n
\end{equation}
as $n\rightarrow\infty$. In particular, $w_n\in \RV_{\infty}(1-\beta)$ (note that their $\beta$ corresponds to our $1-\beta$, 
 and their $w_n$ corresponds to our $w_{n+1}$).

\subsection{A non-central limit theorem}
Let $(E,\calE,\mu)$ be $\sigma$-finite infinite measure space and $T$ a measure-preserving ergodic and conservative transform. We recall   our model, a stationary sequence $(X_k)_{n\in\N}$  in \eqref{eq:2},
where $M$ is the infinitely divisible random measure on $(E,\calE)$ with symmetric L\'evy measure $\rho$ and control measure $\mu$ as in Section \ref{sec:mult int}.

We are now ready to state the main result of the paper. Below $\mu^{\otimes p}$ denotes the $p$-product measure of $\mu$ on the product $\sigma$-field $\cl{E}^p$.
\begin{Thm}\label{Thm:CLT}
Assume $\beta, p$ and $\beta_p$ are as in \eqref{eq:beta}. For $(X_k)_{k\in\N}$ introduced in \eqref{eq:2}, suppose the following assumptions hold:
\begin{enumerate}[(a)] 
\item The   L\'evy measure $\rho$   satisfies  \eqref{eq:rho}.
\item There exists $A\in \cl{E}$  satisfying Assumption \ref{assump}, 
and $f$ is a bounded $\mu^{\otimes p}$-a.e.\ continuous function on $A^p$. 
\end{enumerate}
Then the stationary process $(X_k)_{k\in\N}$ in \eqref{eq:2} is well-defined. Furthermore, 
\begin{equation}\label{eq:nclt}
\pp{ \frac{1}{ c_n}    \sum_{k=1}^{\lf nt \rf} X_k}_{t\in [0,1]}\Rightarrow   \Gamma(\beta_p)  C_\alpha^{-p/\alpha} \mu^{\otimes p}(f)\cdot \pp{ \Zab(t)}_{t\in[0,1]},
\end{equation}
in $D([0,1])$ with respect to the uniform metric as $n\to\infty$, where  $\Zab(t)$ is the stable-regenerative multiple-stable process defined in \eqref{eq:Zt}. Moreover,
\equh
c_n = n \cdot \pp{\frac{\rho^{ \leftarrow}(1/w_n)}{ b_n}}^p \in \RV_\infty\pp{\beta_p+\frac{1-\beta_p}\alpha},\quad   \label{eq:c_n}
\eque
where  $(w_n)$ is the wandering rate associated to $A$ in \eqref{eq:w_n} and $C_\alpha$ is as in \eqref{eq:C_alpha}.
\end{Thm}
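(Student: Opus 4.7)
The plan is to exploit the multilinear series representation of multiple integrals from Section~\ref{sec:mult int}. Since $f$ is bounded with $\supp f\subset A^p$ and $\mu(A)<\infty$, Lemma~\ref{lem:A1} justifies writing
\[
\sum_{k=1}^{\lfloor nt\rfloor}X_k \;=\; p!\sum_{I\in\mathcal{D}_p}\left(\prod_{i\in I}\varepsilon_i\,\rho^{\leftarrow}(\Gamma_i/\mu(A))\right) N_{n,t}^f(I),
\]
where $N_{n,t}^f(I):=\sum_{k=1}^{\lfloor nt\rfloor} f(T^k U_{i_1},\dots,T^k U_{i_p})$ and $(U_i)_{i\in\N}$ are i.i.d.\ with law $\mu(\cdot\cap A)/\mu(A)$, independent of $(\varepsilon_i),(\Gamma_i)$. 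This reduces the task to (i) obtaining a joint limit for the counting processes $N_{n,t}^f(I)$ and (ii) controlling the resulting series.

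The central ingredient for step (i) is the anticipated Theorem~\ref{Thm:local time}: jointly in $(I_\ell,t_\ell)_{\ell=1,\dots,r}$,
\[
\frac{b_n^p}{n}\, N_{n,t}^f(I) \;\Longrightarrow\; \mu^{\otimes p}(f)\,\Gamma(\beta_p)\,L_{I,t},
\]
where $L_{I,t}$ are the joint local-time processes of \eqref{eq:LIt} driven by i.i.d.\ $\beta$-stable regenerative sets and i.i.d.\ shifts $V_i$ of law \eqref{eq:V}. For $f=\prod_i 1_A$ this follows from the simultaneous-return structure and the moment formula of Theorem~\ref{thm:1}; extension to general bounded $\mu^{\otimes p}$-a.e.\ continuous $f$ relies on the Krickeberg-type approximation enabled by Assumption~\ref{assump}. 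Assuming this, step (ii) proceeds by truncation: writing $\mathcal{D}_p=\mathcal{D}_p(M)\sqcup\mathcal{H}(p,M)$ as in \eqref{eq:D le p M}--\eqref{eq:H(k,M)}, use regular variation to deduce $\rho^{\leftarrow}(\Gamma_i/\mu(A))/\rho^{\leftarrow}(1/w_n)\to (\mu(A)\Gamma_i/w_n)^{-1/\alpha}$ in probability, combine with \eqref{eq:b_n w_n} to absorb $\Gamma(\beta)\Gamma(2-\beta)$ factors, and conclude that the finite sub-sum over $I\in\mathcal{D}_p(M)$ converges to the corresponding truncation of the series representation \eqref{eq:Z_t [0,1]} of $\Zab(t)$, multiplied by $\Gamma(\beta_p)C_\alpha^{-p/\alpha}\mu^{\otimes p}(f)$. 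The tail over $\mathcal{H}(p,M)$ is killed by sending $M\to\infty$, with uniform-in-$n$ tail bounds obtained by combining the $\Gamma_i$-moment estimate \eqref{eq:gamma estimate} with second-moment bounds on $b_n^p N_{n,t}^f(I)/n$ derived from Theorem~\ref{thm:1} (or its pre-limit analogue).

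For tightness in $D([0,1])$ with the uniform metric (the limit $\Zab$ being continuous by the last proposition), the plan is to establish a Kolmogorov-type moment inequality
\[
\esp\left|\sum_{k=\lfloor ns\rfloor+1}^{\lfloor nt\rfloor}X_k\right|^{2r} \;\le\; C\, c_n^{2r}\,(t-s)^{1+\eta}
\]
for some $r\in\N$ large and some $\eta>0$. Starting again from the series representation and conditioning on $(\Gamma_i,U_i)$, Khinchine's inequality for multilinear Rademacher forms reduces this to bounding $\esp\bigl[\prod_\ell(N_{n,t_\ell}^f(I_\ell)-N_{n,s_\ell}^f(I_\ell))^2\bigr]$, whose limit is controlled by the increment moments of the joint local times (Corollary~\ref{Cor:incre moment} and its multi-$I$ analogue from Theorem~\ref{thm:1}); the required H\"older exponent $(r-1)\beta_p+1$ exceeds $1$ for $r$ large.

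The main obstacle is the uniform-in-$n$ tail control of the series and the corresponding pre-limit moment bound on $N_{n,t}^f(I)$ when $\max I$ is large, since then the typical shift $V_i$ (which the paper builds from the wandering-set structure) can be near $t$ and the local-time increment is small but degenerate. Overcoming this requires a pre-limit version of Theorem~\ref{thm:1} in which the bounds \eqref{eq:psy} are shown to hold for the simultaneous-return counts uniformly in $n$, which is a combinatorial moment computation running parallel to the random-covering argument of Section~\ref{sec:covering} but carried out on the dynamical system via the dual-operator estimate \eqref{eq:uniform ret}.
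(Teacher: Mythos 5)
There is a genuine gap at the very first step, and it propagates through the whole argument. Your series representation fixes $B=A$: you expand $\sum_k X_k$ with coefficients $\rho^{\leftarrow}(\Gamma_i/\mu(A))$ and points $U_i$ drawn from $\mu(\cdot\cap A)/\mu(A)$. But the representation \eqref{eq:series rep} requires all integrands to be supported in $B^p$, and the integrand of $X_k$ is $f\circ T_p^k$, whose support is $\{x:\,T^kx_j\in A,\ j=1,\dots,p\}\not\subset A^p$ (a point can enter $A$ at time $k$ without lying in $A$). The paper must therefore use the $n$-dependent set $B=\{\varphi\le n\}$ with $\mu(B)=w_n$ as in \eqref{eq:X_k series}, with $U_i^{(n)}\sim\mu_n$ of \eqref{eq:mu_n}. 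This choice is not cosmetic: the normalized first-entrance times $\varphi(U_i^{(n)})/n$ are what converge (by regular variation of $w_n$) to the random shifts $V_i$ with law \eqref{eq:V}, which appear in the series representation \eqref{eq:Z_t [0,1]} of $\Zab$; and the $n$-dependent coefficients $\rho^{\leftarrow}(\Gamma_i/w_n)$ are what give $\rho^{\leftarrow}(\Gamma_i/w_n)/\rho^{\leftarrow}(1/w_n)\to\Gamma_i^{-1/\alpha}$ and put $w_n$ into $c_n$ in \eqref{eq:c_n}. With your fixed $B=A$ the starting points all lie in $A$, so no shifts $V_i$ with law \eqref{eq:V} can arise and the limit would not be $\Zab$; moreover your asserted ratio limit $\rho^{\leftarrow}(\Gamma_i/\mu(A))/\rho^{\leftarrow}(1/w_n)\to(\mu(A)\Gamma_i/w_n)^{-1/\alpha}$ is incoherent (the left side tends to $0$ since $\rho^{\leftarrow}(1/w_n)\to\infty$, while the right side depends on $n$ and diverges). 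Once the representation is taken as in \eqref{eq:X_k series}, your outline of the finite-dimensional part (truncation over $\calD_p(m)$, the joint local-time convergence of Theorem \ref{Thm:local time} proved by moments via Theorem \ref{thm:1} and Carleman, tail control via \eqref{eq:gamma estimate} and a uniform pre-limit moment bound in the spirit of \eqref{eq:upper_bound}) does match the paper's route in Section \ref{sec:pf fdd}.

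The tightness plan also fails as stated. You propose the unconditional bound $\esp\bigl|\sum_{k=\lfloor ns\rfloor+1}^{\lfloor nt\rfloor}X_k\bigr|^{2r}\le C\,c_n^{2r}(t-s)^{1+\eta}$ with $r$ large, but $X_k$ is built from an infinitely divisible measure with regularly varying L\'evy tail of index $\alpha<2$, so all moments of order $\ge\alpha$ (in particular every even moment) of the partial sums are infinite; equivalently, after conditioning on the Rademachers, the required factors $\esp\,\Gamma_i^{-2r/\alpha}$ for small indices $i$ blow up, and Khinchine's inequality cannot rescue an infinite quantity. The paper avoids this by keeping the decomposition \eqref{eq:SR}: the leading part $S_{n,m}$ involves only finitely many indices and is handled without moments (tightness of the monotone processes $L_{n,I,t}$ together with tightness of the coefficient random variables), while only the remainder $R_{n,m}$, where $\min I_2>m$ is large so that $\esp\prod_{i\in I_2}G_n(\Gamma_i)^2$ is finite and summable, is controlled by a second-moment ($a=2$) Kolmogorov-type bound \eqref{eq:goal tight} using Rademacher orthogonality and the pre-limit increment bound derived from \eqref{eq:upper_bound}. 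You would need to restructure your tightness argument along these lines; as written, the key inequality you aim for cannot hold.
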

The proof of Theorem \ref{Thm:CLT} is carried out in Section \ref{sec:proof}. 
 \begin{Rem}\label{Rem:diff}
Compared to the result for $p=1$ established in \citep{owada15functional}, we assume the same assumption on $\rho$, but strictly stronger assumptions on the dynamical system and $f$.   
  Indeed, weaker notions    \emph{Darling--Kac set}
 and \emph{uniform set} were   adopted in \citep{owada15functional} instead of \eqref{eq:uniform ret}. For example, a set $A$ is a {\em Darling--Kac set} if for some positive sequence $ (a_n)_{n\in\N}$ tending to $\infty$,
 \equh\label{eq:uniform}
\frac{1}{a_n}\sum_{k=1}^n\wh{T}^k 1_A \to \mu(A)\quad \mbox{ uniformly a.e.~on $A$},
 \eque 
which is a  Ces\'aro average version of \eqref{eq:uniform ret} when $g=1_A$.   See \citep{kessebohmer07limit} for more discussions on the difference between uniform sets and uniformly returning sets. 
Also if $p = 1$, topologizing $A$ as a Polish space is unnecessary since one can apply the powerful Hopf's ratio ergodic theorem in order to treat a general $f$ (see the proof of Theorem 6.1 of \cite{owada15functional}).
 The reason that we enforce 
 a stronger assumption
 here is that for multiple integrals with $p\ge 2$, it is no longer clear how to write the statistic of interest in terms of a partial sum to which we can apply \eqref{eq:uniform}  (compare e.g.~\eqref{eq:DCT1}
  below with \citep[Eq.\ (6.10)]{owada15functional}).    It is unclear to us whether Theorem \ref{Thm:CLT} continues to hold if  Assumption \ref{assump} is relaxed to the Ces\'aro average version as in  \eqref{eq:uniform}  or even to those in \cite{owada15functional}. Nevertheless, Assumption \ref{assump}   allows us to treat a sufficiently rich class of dynamical systems and functions $f$  as exemplified in  Section \ref{sec:Eg} below. 
\end{Rem}

\subsection{Examples}\label{sec:Eg}
 We shall provide two classes of examples regarding the  assumptions involved in the main result Theorem \ref{Thm:CLT}, one about transforms on the interval $[0,1]$, and  the other   about   Markov chains. 
\begin{Eg}
The following example  can be found in Thaler \citep{thaler00asymptotics}. 
Let $(E,\cl{E})=([0,1],\cl{B}[0,1])$. 
Define  a measure by
\begin{equation*} 
\mu_q(dx)= \left(\frac{1}{x^q}+\frac{1}{(1+x)^q}\right)  1_{(0,1]}(x)dx, \quad q>1.
\end{equation*}
Define   the transformation $T=T_q: E\rightarrow E$ by
\[
T_q(x):=x\left(1+\left(\frac{x}{1+x}\right)^{q-1}-x^{q-1}\right)^{1/(1-q)} ~~(\text{mod }1).
\]
The transform $T_q$ has   an indifferent fixed point at $x=0$, namely, $T_q(0)=0$ and $T_q'(0+)=1$, and the measure  $\mu_q$  is infinite on any neighborhood of $x=0$.
Furthermore,  $T_q$ can be verified to be $\mu_q$-preserving, conservative and ergodic.   
  
 If we choose $A=[\epsilon,1]$, $\epsilon\in (0,1)$, then 
according to Thaler \citep{thaler00asymptotics}, any   Riemann integrable function  on $A$   satisfies \eqref{eq:uniform ret} and \eqref{eq:bn_RV} with $\beta=1/q$. In Theorem \ref{Thm:CLT}, we can take the $p$-variate function $f$ to be any Riemman integrable function 
with support in 
$A^p$.

In fact, the example above belongs to the so-called AFN-systems, a well-known class of interval maps possessing indifferent fixed points and an infinite invariant measure. See Zweim\"uler \citep{zweimuller98ergodic,zweimuller00ergodic} for the   definitions.  Recently for a large class of AFN-systems, Melbourne and Terhesiu \citep[Theorem 1.1]{melbourne12operator} and Gou\"ezel \citep{gouezel11correlation} established    the uniform return relation (\ref{eq:uniform ret}) with \eqref{eq:bn_RV}  for  Riemann integrable $g$    on  $A\subset [0,1]$  where $A$ is  a union of closed intervals which are away from the indifferent fixed points of $T$.

\end{Eg}

We state a primitive characterization of Assumption \ref{assump} which facilitates the discussion of the next example.
\begin{Pro}\label{Pro:unif ret}
Let $(A,\cl{E}_A)$ be as in Assumption \ref{assump}.    Assumption \ref{assump} holds if and only if  
 there exists a collection $\mathcal{C}\subset\cl{E}_A$  with the following properties:
\begin{enumerate}[(a)]
\item \label{a}$\cl{C}$ is a $\pi$-system 
containing $A$;
\item $\cl{C}$ generates the Polish topology of $A$  in the sense that for any open $G\subset A$ and any $x\in G$, there exists $ U\in \cl{C}$ such that $x\in \mathring{U} \subset U\subset G$;
\item \label{c} Any  set in $\cl{C}$ is   $\mu$-continuous;
\item \label{d} There exists a positive sequence $b_n \in \text{RV}_\infty(1-\beta)$, $0 < \beta < 1$, such that for any $B\in \cl{C}$,
\begin{equation}\label{eq:unif ret C}
b_n \wh{T}^n 1_B(x)  \rightarrow \mu(B) \quad \text{uniformly for a.e. }x\in A.
\end{equation}

\end{enumerate}
\end{Pro}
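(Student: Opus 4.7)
The plan is to prove the two directions separately. The forward direction is essentially a definition check; the reverse direction requires bootstrapping (d) through two successive approximation steps, each exploiting the positivity of $\wh T^n$.

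For $(\Rightarrow)$, the natural candidate is $\cl C := \{B \in \cl{E}_A : \mu(\partial B) = 0\}$, with the boundary taken in the Polish topology of $A$. Items (a) and (c) are immediate from $\partial(B_1\cap B_2)\subset \partial B_1\cup \partial B_2$ and $\partial A = \emptyset$; item (d) is Assumption \ref{assump} applied to $g = 1_B$, which is bounded and $\mu$-a.e.\ continuous exactly when $\mu(\partial B)=0$. For (b), fix a compatible complete metric $d$ on $A$; given $x\in G$ open, take $U=\overline{B}(x,r)$ with $r$ small enough that $U\subset G$. The spheres $\{d(x,\cdot)=r\}$ are pairwise disjoint and $\mu(A)<\infty$, so only countably many carry positive $\mu$-mass, and almost every sufficiently small $r$ yields $U\in\cl C$ with $x\in\mathring U\subset U\subset G$.

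For $(\Leftarrow)$, the first step extends (d) from $B\in\cl C$ to any finite Boolean combination of members of $\cl C$: since $\cl C$ is a $\pi$-system, inclusion--exclusion expresses each such indicator as a finite linear combination $\sum c_i 1_{B_i}$ with $B_i\in\cl C$, and linearity of $\wh T^n$ yields uniform a.e.\ convergence. The second (main) step upgrades this to an arbitrary $\mu$-continuous $B\subset A$ via a Portmanteau-style sandwich. Using (b), the open set $\mathring B$ is covered by interiors $\{\mathring U_x\}$ with $U_x\in\cl C$ and $U_x\subset\mathring B$; since $A$ is Polish and hence Lindel\"of, a countable subcover exists, and $\sigma$-additivity together with $\mu(A)<\infty$ produces a finite sub-union $F_-\subset B$ built from $\cl C$ and satisfying $\mu(B)-\mu(F_-)<\epsilon$. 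The analogous construction inside $A\setminus \overline B$, followed by complementation in $A$, provides a set $F_+\supset B$ in the Boolean algebra generated by $\cl C$ with $\mu(F_+)-\mu(B)<\epsilon$, where $\mu(\partial B)=0$ is essential. Positivity of $\wh T^n$ combined with step one applied to $F_\pm$ then sandwiches $b_n\wh T^n 1_B$ uniformly a.e.\ between quantities within $O(\epsilon)$ of $\mu(B)$.

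The third step handles bounded $\mu$-a.e.\ continuous $g$ on $A$: partition its range into finitely many intervals $[t_{j-1},t_j)$ of length $\eta$, choosing the endpoints $t_j$ to avoid the at most countably many levels $t$ with $\mu(g^{-1}\{t\})>0$. Each $D_j:=g^{-1}[t_{j-1},t_j)\cap A$ is then $\mu$-continuous, since $\partial D_j\subset g^{-1}\{t_{j-1},t_j\}\cup\{x:g \text{ discontinuous at } x\}$ is $\mu$-null. Sandwiching $g$ between $\sum_j t_{j-1}1_{D_j}\le g\le \sum_j t_j 1_{D_j}$ and invoking step two for each of the finitely many $D_j$ yields the desired uniform a.e.\ convergence of $b_n\wh T^n g$ to $\mu(g)$. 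The main obstacle is step two, where the Polish topology, property (b), $\sigma$-additivity, and finite truncation must be orchestrated to produce inner and outer approximations in the algebra generated by $\cl C$ while preserving uniformity in $x$; once this Portmanteau-type lemma is in place, the remaining steps reduce to linearity and truncation.
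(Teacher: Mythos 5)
Your proof is correct, and its overall architecture is the same Portmanteau-style sandwich the paper uses: take $\cl{C}$ to be all $\mu$-continuity sets in $\cl{E}_A$ for the ``only if'' direction, and for the converse first extend \eqref{eq:unif ret C} to the algebra generated by $\cl{C}$, then squeeze a general $\mu$-continuity set between algebra elements, and finally pass to bounded $\mu$-a.e.\ continuous $g$ via simple functions, using positivity of $\wh{T}^n$ at each sandwiching step. The differences are in the tools. For the algebra step, the paper invokes a field variant of Dynkin's $\pi$-$\lambda$ theorem and checks that finite disjoint unions and proper differences preserve the uniform return relation, whereas you expand indicators of Boolean combinations by inclusion--exclusion inside the $\pi$-system (using $A\in\cl{C}$); this is more elementary and bypasses the Dynkin variant. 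For the inner/outer approximation of a $\mu$-continuity set $B$, the paper uses tightness of $\mu$ restricted to the Polish space $A$ to get a compact $K\subset\mathring{B}$ and then compactness to extract a finite subcover from the $\cl{C}$-interiors provided by condition (b), while you use second countability (Lindel\"of) plus continuity from below of the finite measure $\mu|_A$; both are valid, and your route dispenses with tightness, needing $\mu(A)<\infty$ only to pass by complementation from the inner approximation of the open set $A\setminus\overline{B}$ to the outer set $F_+$, where $\mu(\partial B)=0$ enters exactly as you say. For general bounded $\mu$-a.e.\ continuous $g$, the paper appeals to Lemma \ref{Lem:riemann approx} with $p=1$ (a covering construction), whereas you partition the range of $g$ at non-atom levels and note that the resulting level sets are $\mu$-continuity sets because $\partial D_j$ is contained in the union of two $g$-level sets and the discontinuity set of $g$; this is a correct and, in the univariate case, simpler alternative. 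Your closed-ball argument for condition (b) in the forward direction (only countably many spheres around a fixed center can carry positive $\mu$-mass) fills in a detail the paper dismisses as immediate, and is exactly right.
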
  
The proof of the proposition can be found in Section \ref{sec:approx} below.

\begin{Eg}
Let $S$ be a countably infinite state space.
Consider an aperiodic irreducible and null-recurrent  Markov chain $(Y_k)_{k\ge0}$ on $S$, which has $n$-step transition probabilities   $(p^{(n)}(i,j))_{i,j\in S}$ and an invariant measure $\pi$ on $S$ which satisfies $\pi_i>0$ for any $i\in S$. Fix a state $o\in S$ and assume without loss of generality a normalization condition:
\[
\pi_o=1.
\]
Consider the path space    $E=\{x=(x(0),x(1),x(2),\ldots):~ x(k)\in S \}$  and  let $\cl{E}$ be the cylindrical $\sigma$-field.  Then one can define a    $\sigma$-finite infinite measure   $\mu$ on $(E,\cl{E})$ as
\[
\mu(\cdot )=\sum_{i\in S}\pi_i P^i(\cdot),
\]
where $P^i(\cdot)$ denotes the law $(Y_k)_{k\ge0}$ starting at state $i\in S$ at time $k=0$.  
Consider the measure preserving map of the left-shift
\[
T:E\rightarrow E ,  ~T(x(0),x(1),x(2),\ldots)=(x(1),x(2),\ldots).\]  Due to the assumptions on the chain,   the map $T$ is ergodic and conservative \citep{harris53ergodic}, and  each $P^i$ can be verified to be atomless and thus so is  $\mu$.

Now let  $A=\{x=(x(0),x(1),\ldots)\in E: x(0)=o\}$. Consider the discrete topology on $S$ induced by the metric $d(i,j)=1_{\{i\neq j\}}$, $i,j\in S$. Then the    product  space $A$ is known to be  Polish with Borel $\sigma$-field $\cl{E}_A:=\cl{E}\cap A$,   and  a  topological basis of $A$ is formed by  
\[
\cl{C}=\big\{\{x\in E: ~x(0)=o,\ x(1)=s_1, \ldots,\ x(m)=s_m\}, \ m\in \N,\ s_i\in S\big\}\cup \{\emptyset,\ A\}.
\]
 See e.g. \cite{moschovakis09descriptive}, Section 1A. 
Note that every set in $\cl{C}$ is both open and closed,   so the boundary of each is empty.  Therefore conditions 
\eqref{a}--\eqref{c}
 in Proposition \ref{Pro:unif ret} hold.

By \citep[the last line of
 page 156]{aaronson97introduction},   if $B=\{x\in A:  ~x(1)=s_1,\ldots,\ x(m)=s_m\}\in \cl{C}$,   we have   for $x=(o,x(1),x(2),\ldots)\in A$ and $n>m$ that
\begin{align*} 
(\wh{T}^n 1_B)(x) 
 =    p(o ,s_1)\cdots p(s_{m-1},s_m)  p^{(n-m)}(s_m,o)=\mu(B)  p^{(n-m)}(s_m,o).
\end{align*} 
We claim that if we assume 
\begin{equation}\label{eq:RV ret prob}
p^{(n)}(o,o)  \in \mathrm{RV}_\infty(\beta-1),
\end{equation}
then condition 
\eqref{d}
 of Proposition \ref{Pro:unif ret} holds with $b_n\sim 1/p^{(n)}(o,o)$ as $n\rightarrow\infty$.  Indeed, this is the case if for any $m\in \N$ and $s\in S$, we have
\begin{equation}\label{eq:strong ratio}
\lim_n \frac{p^{(n-m)}(s,o)}{p^{(n)}(o,o)}=1.
\end{equation}
   Condition \eqref{eq:strong ratio} is essentially   the \emph{strong ratio limit property} in \cite{orey61strong}, and   as shown there,    it  is equivalent to 
\begin{equation*} 
 \lim_n \frac{p^{(n+1)}(o,o)}{p^{(n)}(o,o)}   =1.
\end{equation*}
The last line follows from \eqref{eq:RV ret prob} and \cite[Theorem 1.9.8]{bingham87regular}. 
 
In view of the topological basis $\cl{C}$, any function $f$  on $A^p$ which depends only on a finite number of coordinates of $(x_1,\ldots,x_p)\in A^p$ can be verified to be continuous. On the other hand, a bounded continuous function   on $A^p$ depending on infinitely many coordinates can be constructed, for example, as 
$
f(x_1,\ldots,x_p)=  \sum_{n=1}^\infty 2^{-n} \sum_{j=1}^p 1_{\{x_j(n)=o\}}  .
$ 
\end{Eg}
 
   \section{Proof of the non-central limit theorem}
\label{sec:proof}

  We first provide    a summary of the proof.
We prove our main Theorem \ref{Thm:CLT} here by establishing the convergence of finite-dimensional distributions and the tightness in $D([0,1])$ separately. We shall work with our series representation established in Section \ref{sec:mult int}, and proceed by decomposing it into a leading term and a remainder term.
Most of the effort is devoted to the convergence of the finite-dimensional distributions of the leading term. For this purpose, the key is Theorem \ref{Thm:local time}  which concerns a   convergence to the joint local-time processes introduced in Section \ref{sec:local times}. To prove Theorem \ref{Thm:local time}, we shall apply the method of moments and make use of the moment formulas  established in Theorem \ref{thm:1} for the joint local-time processes.  To facilitate the moment computation,  a delicate approximation scheme is developed in Section \ref{sec:approx}. The tightness in $D([0,1])$ is also established with the aid of the aforementioned decomposition.
 Finally we note that our proof techniques are essentially different from those in the case $p=1$  considered in \cite{owada15functional}. In the case $p=1$, the proof in \cite{owada15functional} relied heavily on the infinitely divisibility of the single stochastic integral and Hopf's ratio ergodic theorem. These ingredients are non-applicable for $p\ge 2$, and our proof strategy, instead,  exploits the series representation of multiple stochastic integrals.

We now start  by a series representation of the joint distribution of $(X_k)_{k=1,\dots,n}$. For each  fixed $n\in \N$, let $(U_i^{(n)})_{i\in\N}$ be i.i.d.~taking values in $E$ following the law
\begin{equation}\label{eq:mu_n}
\mu_n(\cdot):= \frac{\mu( \cdot \cap \{\varphi\le n\}  )}{\mu(\varphi\le n)}=\frac{\mu( \cdot \cap \{\varphi\le n\}  )}{w_n},
\end{equation}
where $\varphi$ is the first entrance time to $A$ as in (\ref{eq:phi}).  
Let 
\[
T_p:=T\times\cdots \times T: E^p\to E^p
\]
 be the product transform. For each fixed $n\in \mathbb{N}$,
we apply the series representation (\ref{eq:series rep}) with $B=\{\varphi\le n\}$,
and obtain
\begin{equation}\label{eq:X_k series}
(X_k)_{k=1,\dots,n}\EqD \left(p!\sum_{ I \in \cl{D}_p} \left(\prod_{i\in I}\varepsilon_i  \rho^{\leftarrow}(\Gamma_i/w_n)\right)  f\circ T_p^k(\vv U^{(n)}_I)\right)_{k=1,\dots,n}, n\in\N,
\end{equation}
where $w_n=\mu(\varphi \le n)$ is the wandering rate sequence as in (\ref{eq:w_n}), and  $(\varepsilon_i)_{i\in\N}, (\Gamma_i)_{i\in\N}$ are as in Section  \ref{sec:mult int} and are independent from $(U_i\topp n)_{i\in\N}$. Recall the notation $\vv U_I\topp n = (U_{i_1}\topp n,\dots,U_{i_p}\topp n)$ with $I = (i_1,\dots,i_p)\in\calD_p$.
For every $n$,
the series representation converges almost surely by Lemma \ref{lem:A1}  since $f$ is bounded.

Let 
\begin{equation}\label{eq:S_n(t)}
S_n(t):= \frac{1}{c_n}\summ k1{\floor {nt}} X_k  
\end{equation}
be the normalized partial sum of interest, 
with $c_n = n (\rho^{ \leftarrow}(1/w_n)/ b_n)^p$ as in \eqref{eq:c_n}.
The proof consists of proving the convergence of finite-dimensional distributions and tightness. 

\subsection{An approximation scheme}\label{sec:approx}
Under the setup of Assumption \ref{assump}, we  introduce a  class of  functions useful for approximation purposes.  Note that the product space $A^p$ is also Polish with Borel $\sigma$-field $\cl{E}_A^p$. 
\begin{Def}\label{Def:eleme ntary}
A function $g:A^p\rightarrow \mathbb{R}$ is said to be  an \emph{elementary function}, if it is a finite linear combination of indicators of $p$-products of  $\mu$-continuity sets in $\cl{E}_A$, that is,
\[
g(x_1,\ldots,x_p)=\sum_{m=1}^M  b_m 1_{B_{1,m}\times
\cdots\times B_{p,m}}(x_1,\ldots,x_p) 
\]
where $M\in \mathbb{N}$, $b_m$'s are some real constants and  $B_{j,m}\in \cl{E}_A$ with $\mu(\partial B_{j,m} )=0$. 
A set $B\in\cl{E}_A^p$ is said to be an \emph{elementary set}, if $1_B$ is an elementary function.
\end{Def}
\begin{Lem}\label{Lem:riemann approx}
Let $f$ be a bounded $\mu^{\otimes p}$-a.e.\ continuous function on $A^p$. Then for any $\epsilon>0
$,  there exist elementary functions $g_1,g_2$ on $A^p$, such that $L(f)\le g_1\le f\le g_2\le U(f)$ and $|\mu^{\otimes p}(f)-\mu^{\otimes p}(g_i)|<\epsilon$, $i=1,2$, where $L(f)=\inf\{f(\vv x):\vv x  \in A^p\}$ and $U(f)=\sup \{f(\vv x):\ \vv x \in A^p\}$.
\end{Lem}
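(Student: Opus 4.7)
The plan is a Riemann-sum argument: decompose $[L(f),U(f)]$ into small intervals, use the super-level sets $F_k=\{f\ge a_k\}$ to form lower and upper approximating step functions sandwiching $f$, and then replace each $F_k$ by elementary sets that sit pointwise inside and outside it. Since $L(f)\cdot 1_{A^p}$ is itself elementary (the whole space $A$ has empty boundary in itself, hence is a $\mu$-continuity set), I first subtract $L(f)$ and assume $0\le f\le M:=U(f)-L(f)$. For large $N$ choose partition points $0=a_0<a_1<\cdots<a_N=M+\eta$ with $\max_k(a_k-a_{k-1})<\epsilon/(4\mu(A)^p)$, picked from a dense set avoiding the at most countably many levels $c$ with $\mu^{\otimes p}(\{f=c\})>0$. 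If $f$ is continuous at $\vv x\in\partial F_k$ and $f(\vv x)\ne a_k$, then a small ball around $\vv x$ would lie entirely in $F_k$ or in its complement, contradicting $\vv x\in\partial F_k$; hence $\partial F_k\subset D\cup\{f=a_k\}$ where $D$ is the $\mu^{\otimes p}$-null discontinuity set of $f$, so each $F_k$ is a $\mu^{\otimes p}$-continuity Borel set, and the Riemann sums $R_-:=\sum_{k=1}^N(a_k-a_{k-1})1_{F_k}$ and $R_+:=\sum_{k=1}^N(a_k-a_{k-1})1_{F_{k-1}}$ satisfy $R_-\le f\le R_+$ pointwise with $\mu^{\otimes p}(R_+-R_-)<\epsilon/4$.

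The key step is the bilateral approximation of each $F_k$ by elementary sets. For any $x\in A$ at most countably many radii $r$ yield $\mu(\partial B(x,r))>0$, so the open balls of admissible radii form a countable topological basis of $A$ consisting of $\mu$-continuity sets; their $p$-fold products then form a basis of $A^p$ consisting of single-term elementary sets. Because $F_k$ is $\mu^{\otimes p}$-continuity, its open interior $F_k^\circ$ and open exterior $A^p\setminus\overline{F_k}$ share $\mu^{\otimes p}$-measure with $F_k$ and $A^p\setminus F_k$ respectively. By Ulam's tightness theorem for the finite Borel measure $\mu^{\otimes p}$ on the Polish space $A^p$, select compacts $K_k\subset F_k^\circ$ and $L_k\subset A^p\setminus\overline{F_k}$ whose complements (in $F_k$ and $A^p\setminus F_k$ respectively) have measure less than a prescribed $\delta$. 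Compactness of $K_k$ inside the open $F_k^\circ$ gives a finite cover by basic product rectangles, whose union $V_k^-$ satisfies $K_k\subset V_k^-\subset F_k^\circ\subset F_k$ and is elementary with $\mu^{\otimes p}(F_k\setminus V_k^-)<\delta$; dually, cover $L_k$ by finitely many basic rectangles inside $A^p\setminus\overline{F_k}$ to form an elementary $W_k$, and set $V_k^+:=A^p\setminus W_k$, which is again elementary (the class of elementary sets is an algebra closed under complement), with $V_k^+\supset F_k$ and $\mu^{\otimes p}(V_k^+\setminus F_k)<\delta$.

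Setting $\delta:=\epsilon/(16M)$ and $V_0^+:=A^p$, and reinstating the shift, I define
\[
g_1:=L(f)\cdot 1_{A^p}+\sum_{k=1}^N(a_k-a_{k-1})1_{V_k^-},\qquad g_2:=L(f)\cdot 1_{A^p}+\sum_{k=1}^N(a_k-a_{k-1})1_{V_{k-1}^+},
\]
which are elementary functions. The containments $V_k^-\subset F_k$ and $F_{k-1}\subset V_{k-1}^+$ combined with $R_-\le f-L(f)\le R_+$ give $L(f)\le g_1\le f\le g_2\le U(f)$ pointwise, while the telescope $g_2-g_1=(g_2-R_+)+(R_+-R_-)+(R_--g_1)$ is bounded in $\mu^{\otimes p}$-integral by $2M\delta+\epsilon/4+2M\delta<\epsilon$, which yields $|\mu^{\otimes p}(f)-\mu^{\otimes p}(g_i)|<\epsilon$ for $i=1,2$. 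The main technical obstacle is the pointwise---not merely symmetric-difference---sandwich approximation of each level set $F_k$ by elementary sets: standard regularity only produces approximations close in measure, and the resolution hinges on exploiting the $\mu^{\otimes p}$-continuity of $F_k$ to push $F_k$ into its open interior (and its complement into the open exterior) up to $\mu^{\otimes p}$-null error, where compactness inside an open set then yields a finite cover by basic product rectangles living strictly inside the target.
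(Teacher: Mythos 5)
Your argument is correct in substance but follows a genuinely different route from the paper's. The paper decomposes the \emph{domain}: around each continuity point it chooses a product neighborhood $N(x_1,\delta)\times\cdots\times N(x_p,\delta)$ that is elementary (avoiding the countably many radii whose balls fail to be $\mu$-continuity sets) and on which $f$ oscillates by less than $\epsilon$, extracts a countable cover of the continuity set by the Lindel\"of property, truncates to a finite elementary union with small complement, disjointifies, and takes infima and suprema of $f$ over the pieces (and the global inf/sup on the leftover set) to produce $g_1\le f\le g_2$. You instead decompose the \emph{range}: a layer-cake partition at levels avoiding the countably many atoms of the distribution of $f$, the observation that $\partial\{f\ge a_k\}$ lies in the null discontinuity set union the level $\{f=a_k\}$, and then a pointwise two-sided approximation of each super-level set by elementary sets, obtained from inner regularity (Ulam tightness) of the finite measure $\mu^{\otimes p}$ on the Polish space $A^p$ plus compactness inside the open interior/exterior, using that products of admissible balls form a basis and that the elementary sets form an algebra. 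Both proofs rest on the same ingredients ($\mu(A)<\infty$, countably many bad radii or levels, regularity on a Polish space); the paper's version is shorter because oscillation control makes the inf/sup sandwich automatic, whereas your scheme must secure the pointwise containments $V_k^-\subset F_k\subset V_k^+$, which you correctly identify as the crux and resolve correctly.

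One step as written is not right: with $a_N=M+\eta$ you have $\sum_{k=1}^N(a_k-a_{k-1})=M+\eta$, so at any point lying in all of $V_0^+,\dots,V_{N-1}^+$ (e.g.\ where $f$ is near its supremum and the outer approximations overlap) you get $g_2=L(f)+M+\eta=U(f)+\eta>U(f)$, contradicting the claimed bound $g_2\le U(f)$. This is easily repaired within your scheme: either replace $g_2$ by $g_2\wedge U(f)$, which is still elementary because $g_2$ is a simple function over the algebra of elementary sets (each of its level sets is elementary), the truncation only lowers $g_2$, and $g_2\wedge U(f)\ge f$ remains true; or take $a_N=M$ and drop the top super-level set $F_N$ from the lower sum (so you never need $\{f-L(f)=M\}$ to be a null level), at the cost of one extra mesh width in $R_+-R_-$. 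A minor point: the balls of admissible radii are not literally a countable basis unless you also restrict to a countable dense set of centers, but countability is never used---you only need that they form a basis, so compact sets inside open sets admit finite covers by basic product rectangles.
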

\begin{proof}
Suppose the Polish topology of $A$ is induced by  a metric $d$ and let $N(  x,\delta)=\{ y\in A:~d(  x,   y)<\delta\}$, $\delta>0$.  For any $\vv x=(x_1,\ldots,x_p)\in A^p$  and $\delta>0$, 
define  the product neighborhood (corresponding to the uniform metric on $A^p$ induced from $d$)
\[
N_p(\vv x,\delta)=N(x_1,\delta)\times 
\cdots
\times N(x_p,\delta).
\]
Let $C\subset A^p$ be the set of continuity points of $f$, and fix $\epsilon>0$.
For every $\vv x\in C$, when $\delta>0$ is small enough and avoids a countable set of values,  the set $N_p(\vv x,\delta)$ can be made elementary (i.e., each $N(x_i,\delta)$ is $\mu$-continuous, $i=1,\ldots,p$) and 
\[\omega(\vv x,\delta):=\sup\{ |f(\vv x)-f(\vv y)|:~ \vv y\in N_p(\vv x,\delta)\}<\epsilon.
\]   
Next, note that the  separable metric space $A^p$   is second-countable   and thus Lindel\"of (every open cover  has a countable subcover). Hence there exist $\delta_n>0$ and $\vv x_n\in C$, such that   $\cup_{n=1}^\infty N_p(\vv x_n,\delta_n) \supset C$, where each $N_p(\vv x_n,\delta_n)$  is elementary and  $\omega(\vv x_n,\delta_n)<\epsilon$. For each $m\in\N$, set 
$C_m:=\cup_{n=1}^m N_p(\vv x_n,\delta_n)$. This is an elementary set, and one can further choose $m$ large enough  so that $\mu^{\otimes p}(A^p\setminus C_m)=\mu^{\otimes p}(C\setminus C_m)<\epsilon$. One could further express $C_m$ as a union of disjoint elementary sets $C_m = \cup_{n=1}^mD_n$ with  $D_n:=N_p(\vv x_n,\delta_n)\setminus (\cup_{i=1}^{n-1}N_p(\vv x_i,\delta_i))$. 
Then define 
\[
g_1(\vv x) 
:=
  \sum_{n=1}^m
\inf\{f(\vv x):~\vv x\in D_n \} 1_{D_n}(\vv x) +\inf\{f(\vv x): ~ \vv x\in A^p\}1_{ A^p\setminus C_m }(\vv x) 
\]
and define   $g_2$ with   $\inf$'s replaced by $\sup$'s above. Then  $g_1$ and $g_2$ are elementary functions satisfying $g_1\le f \le g_2$, and
\[
  \mu^{\otimes p}(f-g_1) \wedge \mu_p (g_2-f)\ge 0, \quad   \mu^{\otimes p}(f-g_1) \vee \mu^{\otimes p}(g_2-f)\le \epsilon (\mu^{\otimes p}(A^p) + 2\|f\|_\infty).
\]
\end{proof}
\begin{proof}[Proof of Proposition \ref{Pro:unif ret}]
The ``only if'' part is immediate if  $\cl{C}$ to consists of all $\mu$-continuity sets in $\cl{E}_A$. We only need to show the ``if'' part.

  Let $\cl{D}$ be the smallest class of subsets of $A$ containing $\cl{C}$, which is also closed under (i) finite unions of disjoint sets and (ii)   proper set differences. Then we apply a variant of Dynkin's $\pi$-$\lambda$ theorem, where the $\sigma$-field is replaced by  a field, and in the definition of a $\lambda$-system, the   ``countable disjoint union'' is replaced by ``finite disjoint union''.     This variant  can be established using similar  arguments as those in \cite[Section 2.2.2]{resnick99probability}.   Applying this we conclude that $\cl{D}$ is the smallest field containing $\cl{C}$. On the other hand, the class of $\mu$-continuity subsets of  $A$  also forms a field, and so does $\cl{E}_A$.  Hence  any  set in $\cl{D}$ is $\mu$-continuous  and $\cl{D}\subset \cl{E}_A$.  
   Next, one can verify directly that the set operations (i) and (ii) mentioned above preserve \eqref{eq:unif ret C}, and hence the relation   \eqref{eq:unif ret C} holds for $B\in \cl{D}$.   

  Now note that $\mu$ restricted to Polish $A$ is tight   (see e.g. \cite[Theorem 1.3]{billingsley99convergence}). Hence for any $\mu$-continuity set $B\in \cl{E}_A$ and any $\epsilon>0$, there exists a compact $K\subset \mathring{B}$, such  that $\mu(B\setminus K)=\mu(\mathring{B}\setminus K)<\epsilon/2$. Due to the compactness and condition (b) of Proposition \ref{Pro:unif ret}, there exists $D_1\in \cl{D}$ which is a finite union of sets in $\cl{C}$, so that $ K \subset D_1\subset \mathring{B}$. This together with a similar argument with $B$ replaced by $A \setminus B$ entails the existence of $D_1,D_2\in \cl{D}$ satisfying $D_1\subset B \subset D_2$ and $\mu(D_2)-\mu(D_1)<\epsilon$.  Taking $n\rightarrow\infty$ in\begin{equation}\label{eq:approx two sides}
  b_n\wh{T}^n 1_{D_1} \le  b_n  \wh{T}^n 1_B\le b_n\wh{T}^n 1_{D_2} \quad \text{a.e.}, 
\end{equation}
we see that \eqref{eq:uniform ret} holds for  $g=1_B$.
To obtain \eqref{eq:uniform ret}  in   full generality,  first     observe that by linearity of $\wh{T}$, the relation  extends to $g$  which is a finite linear combination  of indicators of $\mu$-continuity sets in $\cl{E}_A$. Then it extends to general  bounded $\mu$-a.e.\  continuous $g$   by an approximation similar to \eqref{eq:approx two sides} via Lemma \ref{Lem:riemann approx} with $p=1$. 
\end{proof}

\subsection{Proof of convergence of finite-dimensional distributions}\label{sec:pf fdd}
We proceed by first writing 
\equh\label{eq:SR}
\ccbb{S_n(t)}_{t\in[0,1]} \eqd \ccbb{S_{n,m}(t)+R_{n,m}(t)}_{t\in[0,1]},
\eque
for $m\in\N$ with
\[
S_{n,m}(t)  :=\frac{1}{c_n}\summ k1{\floor {nt}} p!\sum_{ I \in \cl{D}_p(m)} \left(\prod_{i\in I}\varepsilon_i  \rho^{\leftarrow}(\Gamma_i/w_n)\right)  f\circ T_p^k(\vv U^{(n)}_I),
\]
where $\calD_p(m)$ is as in \eqref{eq:D_p(m)}. 
To show the convergence of  finite-dimensional distributions, we shall show \begin{equation}
S_{n,m}(t)\ConvFDD  \Gamma(\beta_p) \cdot p!\cdot\mu^{\otimes p}(f) \sum_{ I \in \cl{D}_p(m)} \left(\prod_{i\in I}\varepsilon_i  \Gamma_i^{-1/\alpha} \right) L_t\pp{\bigcap_{i\in I} (R_i+V_i)},\label{eq:Snm1}
\end{equation}
for all $m\in\N$ (compare it with \eqref{eq:Z_t [0,1]})
and
\equh\label{eq:Rn}
\limm \limsupn P(|R_{n,m}(t)|>\epsilon)=0, \mfa t\in[0,1], \epsilon>0.
\eque
We prove the two claims separately.
\subsubsection*{Proof of \eqref{eq:Snm1}.}
Introduce 
\[
G_n(y):=\frac{\rho^{\leftarrow}(y/w_n)}{\rho^{\leftarrow}(1/w_n)}
\]
and
\begin{equation}\label{eq:L_t(I,n)}
L_{n,I,t}:=\frac{ b_n^p}{n} \sum_{k=1}^{\lf nt \rf} f\circ T_p^k(\vv U_I^{(n)}),\quad I\in \cl{D}_p, t\ge0, n\in\N,
\end{equation}
and write
\equh
S_{n,m}(t)=p! \sum_{ I \in \cl{D}_p(m)} \left(\prod_{i\in I}\varepsilon_i  G_n(\Gamma_i)
\right)L_{n,I,t}
\label{eq:Snm0}.
\eque
By the assumption $\rho((x,\infty))\in\RV_\infty(-\alpha)$ we have that 
\[
\limn G_n(y) =  y^{-1/\alpha},\quad y>0.
\]
Therefore, \eqref{eq:Snm1} follows from the following result.
\begin{Thm}\label{Thm:local time}With the notation above,
\[
\pp{L_{n,I,t}}_{I\in \calD_p, t\in[0,1]} \stackrel{f.d.d.}\to
\mu^{\otimes p} (f)\Gamma(\beta_p)\pp{L_t\pp{\bigcap_{i\in I} (R_i+V_i)}}_{I\in \calD_p, t\in[0,1]}.
\]
\end{Thm}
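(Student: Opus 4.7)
The plan is to use the method of moments. Each $L_{n,I,t}$ is non-negative, and the candidate limit process $(L_{I_\ell,t_\ell})_{\ell=1}^r$ has finite mixed moments of all orders whose growth satisfies Carleman's condition (as can be read off from Corollary~\ref{Cor:incre moment} via Stirling), so its joint law is moment-determined. It therefore suffices to show, for every finite family $\{(I_\ell,t_\ell)\}_{\ell=1}^r \subset \calD_p \times [0,1]$,
$$
\E \prodd{\ell}{1}{r} L_{n,I_\ell,t_\ell} \to \mu^{\otimes p}(f)^r \,\Gamma(\beta_p)^r\, \E \prodd{\ell}{1}{r} L_{I_\ell,t_\ell},
$$
where the right-hand side is the explicit integral from \eqref{eq:moment limit} of Theorem~\ref{thm:1}. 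Mixed-power joint moments are absorbed into this form by repeating the pairs $(I_\ell,t_\ell)$ with the desired multiplicities. The program is to first prove this moment convergence for tensor-product $f=\prod_{j=1}^p g_j$ with each $g_j$ bounded and $\mu$-a.e.\ continuous on $A$, extend by linearity to elementary $f$ in the sense of Definition~\ref{Def:eleme ntary}, and finally to a general bounded a.e.\ continuous $f$ via the sandwich from Lemma~\ref{Lem:riemann approx}.

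For tensor-product $f$, the independence of the $(U_i^{(n)})_i$ factorizes the joint moment over $i\in\bigcup_\ell I_\ell$:
$$
\E \prod_\ell L_{n,I_\ell,t_\ell} = \frac{b_n^{pr}}{n^r}\sum_{k_1=1}^{\lf nt_1\rf}\cdots\sum_{k_r=1}^{\lf nt_r\rf}\prod_{i=1}^K \E\prod_{\ell\in\calI(i)} g_{\sigma_\ell(i)}(T^{k_\ell}U_i^{(n)}),
$$
where $\sigma_\ell(i)$ denotes the position of $i$ inside $I_\ell$. For each fixed $i$, the inner expectation equals an integral against $\mu_n$. I would use $T$-invariance of $\mu$ to shift the smallest time in $\calI(i)$ to zero, then iteratively apply the duality \eqref{eq:dual} to rewrite the resulting integral as a nested composition of dual operators separated by time-gaps $d_m=k_{\ell_m}-k_{\ell_{m-1}}$. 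Writing $k_\ell=\lf nx_\ell\rf$, repeated application of the uniform-return condition \eqref{eq:uniform ret}, together with $b_m\in\RV_\infty(1-\beta)$ and the relation \eqref{eq:b_n w_n} $w_n\sim b_n/[\Gamma(\beta)\Gamma(2-\beta)]$, yields the asymptotic
$$
\E\prod_{\ell\in\calI(i)} g_{\sigma_\ell(i)}(T^{k_\ell}U_i^{(n)}) \sim b_n^{-|\calI(i)|} \, h_{|\calI(i)|}^{(\beta)}(\xIi) \prod_{\ell\in\calI(i)}\mu(g_{\sigma_\ell(i)}).
$$
Since $\sum_i|\calI(i)|=rp$, the product across $i$ cancels the prefactor $b_n^{pr}$ exactly, and reorganizing the $\mu(g_j)$ factors collects them into $\mu^{\otimes p}(f)^r$. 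What remains is a Riemann sum over $(k_\ell/n)_\ell$ converging to the integral on the right-hand side of \eqref{eq:moment limit}. The extension to elementary $f$ is then by linearity of $L_{n,I,t}$ in $f$ (expanding a finite sum of tensor products yields joint moments of tensor products, possibly with distinct integrands at distinct $\ell$, handled by the same computation), and the sandwich in Lemma~\ref{Lem:riemann approx} transfers moment convergence to a general bounded a.e.\ continuous $f$, after splitting $f=f_+-f_-$ if necessary.

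The main technical obstacle is the iteration of Assumption~\ref{assump}. The uniform return gives $b_m\wh T^m g\to\mu(g)$ uniformly a.e.\ on $A$ for a fixed $g$, whereas the computation forces me to apply $\wh T^{d_m}$ to an intermediate function $h_n:=\wh T^{d_{m-1}}[\cdots]\cdot g_{m-1}$ that itself varies with $n$. The resolution is to verify that if $h_n\to h$ uniformly on $A$ with $h$ bounded and $\mu$-a.e.\ continuous, then $b_m\wh T^m h_n\to\mu(h)$ uniformly on $A$ as $m,n\to\infty$, by a triangle-inequality argument using the uniform-in-$m$ boundedness of $b_m\wh T^m 1_A$. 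A secondary difficulty is the Riemann sum convergence in the presence of the diagonal singularities $(x_{\ell_m}-x_{\ell_{m-1}})^{\beta-1}$ of $h_q^{(\beta)}$; since $\beta_p>0$ these singularities are integrable, and the discrete sum is handled by excising a $\delta$-neighborhood of the diagonals, bounding the excised contribution by its integral counterpart, using uniformity on the bulk to pass to the limit in $n$, and finally letting $\delta\downarrow 0$.
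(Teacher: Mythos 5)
Your overall route coincides with the paper's: method of moments with the multivariate Carleman condition (via Corollary \ref{Cor:incre moment} and Stirling) to get moment-determinacy of the limit, reduction to tensor products of indicators of $\mu$-continuity sets through Lemma \ref{Lem:riemann approx} and linearity/monotonicity, factorization of the joint moment over $i\in\bigcup_\ell I_\ell$ by independence of the $U_i^{(n)}$, and iterated duality \eqref{eq:dual} combined with the uniform-return condition \eqref{eq:uniform ret} and \eqref{eq:b_n w_n} to identify the per-$i$ asymptotics $b_n^{-|\calI(i)|}h^{(\beta)}_{|\calI(i)|}(\xIi)\prod\mu(g)$; this is exactly the computation behind Proposition \ref{Pro:moments}. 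Your auxiliary lemma for applying $\wh{T}^{d}$ to $n$-dependent composite functions is a workable variant of the paper's device, which instead peels off one factor at a time so that $\wh{T}^{d}$ only ever acts on a fixed $f_j$, avoiding the need for that lemma.

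The genuine gap is in the passage from the pointwise asymptotics to the limit of the normalized $r$-fold sum. The paper does not argue via Riemann sums: it rewrites the sum exactly as an integral of a step-function integrand and applies dominated convergence, and the whole technical content of that step is the uniform-in-$n$ dominating bound \eqref{eq:upper_bound}, $H_{n,q}(\vv x)\le C\,h_q^{(\beta-\eta)}(\vv x)$, proved with Potter's bound \eqref{e:bound1 lem}, the uniform bound \eqref{e:bound2 lem} on $w_d\wh{T}^d 1_A$, and a separate argument for configurations with $\floor{nx_i}=\floor{nx_j}$ (coincident time indices), where the gap is zero, no singular factor is produced, and one must instead exhibit the vanishing factor $w_n n^{\beta-1-\eta}\to0$. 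Your plan to ``excise a $\delta$-neighborhood of the diagonals and bound the excised contribution by its integral counterpart'' presupposes precisely such a bound: near the diagonal the factorization $\sim b_n^{-|\calI(i)|}h^{(\beta)}_{|\calI(i)|}(\xIi)$ is not valid, since uniform return only controls large gaps and gives no uniform control for small ones, and the lattice points with exactly coincident $k_\ell$'s (of which there are order $n^{r-1}$) are not covered by any gap-power bound at all. Without an estimate of the type \eqref{eq:upper_bound}, valid uniformly in $n$ and for all gap sizes including zero, the excised contribution is not dominated by the integral of the singular density over the excised region, and the limit interchange does not close. Supplying the Potter-bound domination and the coincident-index estimate is the missing (and main) piece of work; once it is in place, your excision scheme becomes equivalent to the paper's dominated-convergence argument.
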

Theorem \ref{Thm:local time} can be proved by a method of moments.
\begin{Pro}\label{Pro:moments}
Let $f$ be as in Theorem \ref{Thm:CLT}. Then
for any $I_1,\ldots,I_r \in \cl{D}_p$, $t_1,\ldots,t_r\in [0,1]$, we have 
\begin{align}\label{eq:moment sum}
\limn\E\pp{ \prod_{\ell =1}^r L_{n,I_\ell,t_{\ell}}}  = \mu^{\otimes p}(f)^{r}\int_{(\vv0,\vv t)} \prod_{i=1}^K \hIibeta (\xIi) \, d\vv x, 
\end{align}
where $h_{q}^{(\beta)}$ is as in \eqref{eq:hq} 
and $K = \max(\bigcup_{\ell=1}^r I_\ell)$. 
\end{Pro}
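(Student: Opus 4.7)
The plan is to prove Proposition \ref{Pro:moments} by the method of moments, directly expanding $\E\prod_\ell L_{n,I_\ell,t_\ell}$ and identifying it as a Riemann sum converging to the target integral on the right-hand side of \eqref{eq:moment sum}. A preliminary reduction, via Lemma \ref{Lem:riemann approx}, sandwiches a general bounded $\mu^{\otimes p}$-a.e.\ continuous $f$ between elementary functions whose $\mu^{\otimes p}$-integrals approximate $\mu^{\otimes p}(f)$ to any prescribed accuracy; combined with multilinearity of $\prod_\ell L_{n,I_\ell,t_\ell}$ in the $r$ copies of $f$, this reduces the statement to the case where $f$ is a single product of indicators, $f=\prod_{j=1}^p 1_{B_j}$ with each $B_j\in\cl{E}_A$ a $\mu$-continuity set. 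For such $f$, grouping indicator factors by the shared variable $U_i^{(n)}$ (which appears at position $j_\ell(i)$ in $I_\ell$ exactly when $\ell\in\cl{I}(i)$), independence of $(U_i^{(n)})_{i\in\N}$ gives
\begin{equation*}
\E\prod_{\ell=1}^r L_{n,I_\ell,t_\ell}=\left(\frac{b_n^p}{n}\right)^{\!r}\sum_{1\le k_\ell\le\lfloor nt_\ell\rfloor}\prod_{i=1}^K \E\prod_{\ell\in\cl{I}(i)}1_{B_{j_\ell(i)}}(T^{k_\ell}U_i^{(n)}).
\end{equation*}

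The heart of the proof is a pointwise asymptotic for each single-orbit inner expectation. A key simplification is that, since every $B_j\subset A$ and every $k_\ell\le n$, the product $\prod_{s}1_{C_s}(T^{\kappa_s}x)$ is nonzero only for $x$ with $T^{\min_s\kappa_s}x\in A$, hence $\varphi(x)\le n$ automatically; so the density $w_n^{-1}1_{\{\varphi\le n\}}$ of $\mu_n$ collapses to $w_n^{-1}$ inside the integral. Fixing $i$, writing $q=|\cl{I}(i)|$ and relabeling the factors so that $\kappa_1<\cdots<\kappa_q$ with indicators $1_{C_1},\dots,1_{C_q}$,
\begin{equation*}
\E\prod_{s=1}^q 1_{C_s}(T^{\kappa_s}U^{(n)}) = \frac{1}{w_n}\int\prod_{s=1}^q 1_{C_s}(T^{\kappa_s}x)\,\mu(dx).
\end{equation*}
Iteratively applying measure-preservation of $T^{\kappa_1}$ and the duality identity \eqref{eq:dual} to peel off one factor $\wh{T}^{\Delta_s}$ at a time (with $\Delta_s=\kappa_s-\kappa_{s-1}$), then invoking Assumption \ref{assump} to replace each $b_{\Delta_s}\wh{T}^{\Delta_s}1_{C_{s-1}}(y)$ by $\mu(C_{s-1})$ uniformly for a.e.\ $y\in A$, yields
\begin{equation*}
\int\prod_{s=1}^q 1_{C_s}(T^{\kappa_s}x)\,\mu(dx)\sim \prod_{s=1}^q\mu(C_s)\cdot\prod_{s=2}^q b_{\Delta_s}^{-1}.
\end{equation*}
Plugging in $\kappa_s=\lfloor nx_s\rfloor$ with $0<x_1<\cdots<x_q\le 1$ distinct, using the regular variation $b_n\in\RV_\infty(1-\beta)$ to replace $b_{\Delta_s}$ by $b_n(x_s-x_{s-1})^{1-\beta}$, and combining with the Kesseb\"ohmer--Slassi equivalence $b_n\sim\Gamma(\beta)\Gamma(2-\beta)w_n$ from \eqref{eq:b_n w_n}, gives
\begin{equation*}
b_n^{q}\,\E\prod_{s=1}^q 1_{C_s}(T^{\lfloor nx_s\rfloor}U^{(n)})\longrightarrow\prod_{s=1}^q\mu(C_s)\cdot h_q^{(\beta)}(x_1,\dots,x_q),
\end{equation*}
matching exactly the factor $h_{|\cl{I}(i)|}^{(\beta)}(\vv x_{\cl{I}(i)})$ of the target integrand.

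Multiplying these asymptotics over $i=1,\dots,K$, the total normalization $b_n^{rp}=\prod_i b_n^{|\cl{I}(i)|}$ cancels $(b_n^p)^r$ in the prefactor, the Riemann sum $n^{-r}\sum_{k_1,\dots,k_r}$ converges to $\int_{\vv 0<\vv x<\vv t}d\vv x$, and the combined coefficient $\prod_i\prod_{\ell\in\cl{I}(i)}\mu(B_{j_\ell(i)})$ reorganizes into $\prod_{\ell=1}^r\prod_{j=1}^p\mu(B_j)=\mu^{\otimes p}(f)^r$ by reindexing $i\in I_\ell$ over positions $j=1,\dots,p$. I expect the main obstacle to be upgrading the pointwise-in-$\vv x$ convergence above to convergence of the full Riemann sum. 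This requires a uniform-in-$n$ upper bound of the form $\E\prod_s 1_{C_s}(T^{\kappa_s}U^{(n)})\le C\,w_n^{-1}\prod_{s=2}^q b_{\Delta_s}^{-1}$ valid for all admissible ordered $\vv\kappa$, so that dominated convergence applies; the resulting dominating integrand is proportional to $\prod_{s=2}^q(x_s-x_{s-1})^{\beta-1}$, which is integrable near the diagonals thanks to $\beta>0$. A subsidiary technical point is that the iterated use of Assumption \ref{assump} in the peeling must be made uniform over the relevant range of $\Delta_s$, so that pointwise limits survive passage through both the product over $i$ and the sum over $k_1,\dots,k_r$, in particular when some $x_s-x_{s-1}$ is small and $\Delta_s\ll n$.
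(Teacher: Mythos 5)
Your route is essentially the paper's: reduce via Lemma \ref{Lem:riemann approx} to $f$ a product of indicators of $\mu$-continuity sets, expand the mixed moment using independence of the $U_i^{(n)}$ and the support observation that collapses the density of $\mu_n$ to $w_n^{-1}$, obtain the pointwise asymptotics by peeling factors with the dual operator, Assumption \ref{assump}, regular variation and \eqref{eq:b_n w_n}, and finish by dominated convergence over the Riemann sum; your constant bookkeeping in the pointwise limit is consistent with \eqref{eq:pointwise}. The genuine gap is in the domination step, in two respects. First, the dominating function you propose, proportional to $\prod_{s\ge 2}(x_s-x_{s-1})^{\beta-1}$ per orbit $i$, cannot be obtained uniformly in $n$: regular variation of $(w_n)$ only gives, through Potter's bound, $w_n/w_{\lfloor nx_s\rfloor-\lfloor nx_{s-1}\rfloor}\le C (x_s-x_{s-1})^{\beta-1-\eta}$, which is why the paper dominates by $h_q^{(\beta-\eta)}$ rather than $h_q^{(\beta)}$. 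More seriously, your justification of integrability ``thanks to $\beta>0$'' is wrong for the full product over $i=1,\dots,K$: distinct $i$'s with $\ell,\ell'\in\cl{I}(i)$ all contribute a factor involving the same gap $x_\ell-x_{\ell'}$ (e.g.\ when $I_1=\cdots=I_r$, which is an admissible and in fact needed case), so the singularity near a coincidence of two $x_\ell$'s can be as strong as $(x_\ell-x_{\ell'})^{p(\beta-1-\eta)}$. Integrability therefore requires $p(\beta-\eta)-p+1>0$, i.e.\ it is precisely the standing assumption $\beta_p>0$ in \eqref{eq:beta} (with $\eta$ chosen small) that makes the dominating function integrable; the paper checks this by identifying the dominating product, up to a constant, with a joint moment of local times of $(\beta-\eta)$-stable regenerative sets and invoking Corollary \ref{Cor:incre moment}.

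Second, your uniform bound $\E\prod_s 1_{C_s}(T^{\kappa_s}U^{(n)})\le C\, w_n^{-1}\prod_{s\ge2} b_{\Delta_s}^{-1}$ is only meaningful when all $\Delta_s=\kappa_s-\kappa_{s-1}\ge 1$; you relabel so that $\kappa_1<\cdots<\kappa_q$, but for every fixed $n$ the region of $\vv x$ where $\lfloor nx_s\rfloor=\lfloor nx_{s-1}\rfloor$ for some $s$ has positive Lebesgue measure, and there the peeling produces no factor at all, so the dominated convergence argument as you set it up does not cover it. The paper treats this region separately (the set $(\vv0,\vv1)_\uparrow\setminus D_{n,q}$): bounding the repeated indicator by a single $1_A$ costs a factor $w_n$, and using $x_s-x_{s-1}<1/n$ one gets $H_{n,q}(\vv x)\le C\, w_n n^{\beta-1-\eta} h_q^{(\beta-\eta)}(\vv x)$ with $w_n n^{\beta-1-\eta}\in\RV_\infty(-\eta)$ bounded, so the same dominating function still works there. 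By contrast, the uniformity worry you raise about Assumption \ref{assump} for small $\Delta_s\ge1$ is not the real issue: the sup bound \eqref{e:bound2 lem}, valid over all lags $\ge1$ simultaneously, settles it. To complete your argument you need to add the Potter correction, replace the ``$\beta>0$'' integrability claim by the $\beta_p>0$ argument, and handle the colliding-index region separately.
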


\begin{proof}
We may assume that $t_\ell >0$ for all $\ell=1,\ldots,r$, otherwise (\ref{eq:moment sum}) trivially holds with both-hand sides being zeros.
\comment{Let
\begin{equation}\label{eq:a_n^r}
a_n^{(r)}= \frac{n}{\bigl( \Gammabeta w_n \bigr)^r} \sim \frac{n}{b_n^r} ,\quad r=1,\ldots,p,
\end{equation}
where the asymptotic equivalence is due to  (\ref{eq:b_n w_n}).}
We then proceed as follows: 
\begin{align}
\esp\pp{\prodd \ell1r L_{n,I_\ell,t_\ell}} & = \pp{\frac{b_n^p}n}^r\esp\pp{\prodd\ell1r\summ k1{\floor{nt_\ell}}f\circ T_p^k(\vv U_{I_\ell}\topp n)}\notag\\
&
= \pp{\frac{b_n^p}n}^r \sum_{\vv 1\le \vv k\le \floor{n\vv t}} \E \pp{ \prod_{\ell=1}^r f\circ T_p^{k_\ell}(\vv U_{I_\ell}^{(n)}) }.  \label{eq:moment start}
\end{align}
We claim that it is enough to prove \eqref{eq:moment sum} for function $f$ of the form
\begin{equation}\label{eq:f single term}
f(\vv x) = \prod_{j=1}^p f_{j}(x_j),
\qmwith f_j(x) = \ind_{A_j}(x),
\end{equation}
where each $f_{j}$ is 
an indicator of a $\mu$-continuity set $A_j\in\calE_A$ 
 satisfying the uniform return relation  \eqref{eq:uniform ret} and \eqref{eq:bn_RV}. 
   Indeed, since  $f$ can always be written  as a difference of two  non-negative bounded $\mu^{\otimes p}$-a.e.\ continuous functions (e.g., 
 $f= (f + \| f \|_\infty 1_{A^p}) - \|f  \|_\infty 1_{A^p}$), so by an expansion of the product in \eqref{eq:moment start}, one may assume that $f\ge 0$. Next, in view of  Lemma \ref{Lem:riemann approx}, Assumption \ref{assump} and an approximation argument exploiting monotonicity, it suffices to consider $f$ which is elementary in the sense of Definition \ref{Def:eleme ntary}. By a further expansion of the product in \eqref{eq:moment start}, it suffices to focus on 
 $f$ with simple form
  \eqref{eq:f single term}.

From (\ref{eq:f single term}), we can rewrite   using $I_\ell = (I_\ell(1),\dots,I_\ell(p))$ with $I_\ell(1)<\cdots<I_\ell(p)$: 
\begin{align*} 
\prod_{\ell=1}^r f\circ T_p^{k_\ell}(U_{I_\ell}^{(n)})&= \prod_{\ell=1}^r \prod_{j=1}^p f_{j}\circ T^{k_\ell}(U_{I_\ell(j)}^{(n)})\notag\\
&=   \prod_{i=1}^K \prod_{\ell \in \Ii} f_{\Kil} \circ T^{k_\ell} (U_i^{(n)}),
\end{align*}
where, for every $\ell\in\calI(i) = \{\ell' \in\{1,\dots,r\}, i\in I_{\ell'}\}$, 
 $\Kil \in \{ 1,\dots, p \}$   is defined by the relation $I_\ell\big(\Kil\big)=i$. Here and below, we follow the convention $\prod_{\ell \in \emptyset}(\cdot) \equiv 1$.    Since $U_1^{(n)}, \dots, U_K^{(n)}$ are i.i.d.\ following $\mu_n$ in (\ref{eq:mu_n}), we have
$$
\E\pp{   \prod_{i=1}^K \prod_{\ell \in \Ii} f_{\Kil} \circ T^{k_\ell} (U_i^{(n)}) } = \prod_{i=1}^K \mu_n \pp{\prod_{\ell \in \Ii} f_{\Kil} \circ T^{k_\ell}  }. 
$$ 
Then,
\begin{align}
\esp\pp{\prodd \ell1r L_{n,I_\ell,t_\ell}} 
&=  \pp{\frac{b_n^p}n}^r\sum_{\vv1\le\vv k\le \floor{n\vv t}}\prodd i1K\mu_n\pp{\prod_{\ell\in\calI(i)}f_{\calK(i,\ell)}\circ T^{k_\ell}}.\label{eq:Psy_n}
\end{align}
Expressing the $r$-tuple sum over $\vv k$ above by an integral, we claim that
\begin{align}
&\esp\pp{\prodd \ell1r L_{n,I_\ell,t_\ell}}   = b_n^{pr}\int_{(\mathbf{0}, \lfloor n\vv t\rfloor /n)}\prodd i1K\mu_n\pp{\prod_{\ell\in\calI(i)}f_{\calK(i,\ell)}\circ T^{\floor{nx_\ell}+1}}d\vv x\nonumber\\
&\sim \pp{ \Gammabeta }^{pr}\int_{(\mathbf{0}, \lfloor n\vv t\rfloor /n)}\prodd i1Kw_n^{|\calI(i)|-1}\mu\pp{\prod_{\ell\in\calI(i)}f_{\calK(i,\ell)}\circ T^{\floor{nx_\ell}}}d\vv x.
\label{eq:DCT}
\end{align}
Indeed,  in \eqref{eq:DCT},  
we have used   $\mu_n(\cdot)=\mu(\cdot \cap \{\varphi\le n\})/w_n$, 
the relation \eqref{eq:b_n w_n}, and the fact that the functions $f_j\circ T^k$,
 $1\le k\le n$, are supported within $ \{\varphi\le n\}$ and   $\sum_{i=1}^K \abIi=|I_1|+\cdots+ |I_r|= pr$; we also drop the `$+1$' in the power of $T$, since $T$ is measure-preserving with respect to $\mu$. 

To complete the proof, it remains to establish
\begin{multline}\label{eq:DCT1}
\limn\int_{(\mathbf{0}, \lfloor n\vv t\rfloor /n)}\prodd i1Kw_n^{|\calI(i)|-1}\mu\pp{\prod_{\ell\in\calI(i)}f_{\calK(i,\ell)}\circ T^{\floor{nx_\ell}}}d\vv x \\
= \bigl( \Gammabeta \bigr)^{-pr}\left( \prod_{i=1}^K \prod_{\ell \in \Ii} \mu(f_{\Kil}) \right) \int_{(\vv0,\vv t)} \prod_{i=1}^K \hIibeta (\xIi) \, d\vv x.
\end{multline}
Indeed, 
the desired convergence of moments \eqref{eq:moment sum} now follows from 
\eqref{eq:Psy_n}, \eqref{eq:DCT}, \eqref{eq:DCT1} and that
$$
  \prod_{i=1}^K \prod_{\ell \in \Ii} \mu\left(f_{\Kil}\right) = \left( \prod_{j=1}^p \mu(f_{j})  \right)^r = \mu^{\otimes p}(f)^{r}.
$$

In order to show \eqref{eq:DCT1}, we apply the dominated convergence theorem. To simplify the notation, we consider $q \in\{ 1,\dots,p\}$ and $f_1,\dots,f_q$  as in \eqref{eq:f single term}, and introduce
\[
H_{n,q}(\vv x):= w_n^{q-1} \mu\left(  \prod_{j=1}^q f_j\circ T^{\lf n x_j \rf}\right), \quad \vv x\in (0,1)^q.
\]
A careful examination shows that \eqref{eq:DCT1} follows from the following two results:
\equh\label{eq:pointwise}
\limn H_{n,q}(\vv x)=    \pp{ \Gammabeta }^{-q} \left(\prod_{j=1}^q \mu(f_j) \right) \hqbeta (\vv x), \mfa \vv x\in(\vv0,\vv1)_{\neq},
\eque
and, for some $\eta\in(0,\beta)$, 
\equh\label{eq:upper_bound}
H_{n,q}(\vv x)\le C      h_q^{(\beta-\eta)} (\vv x), \mfa \vv x\in(\vv0,\vv1)_{\neq}.
\eque
(Recall $h_q\topp\beta$ in \eqref{eq:hq}.)
Note that we only need to consider the limit for $\vv x\in(\vv 0,\vv 1)_{\neq}:=\{\vv y\in(\vv0,\vv1):y_\ell\ne y_{\ell'}, \forall \ell\ne\ell'\}$. The product  $\prod_{i=1}^K h_{|\cl{I}(i)|}^{(\beta-\eta)} (\xIi)$ is integrable on $(\vv 0,\vv 1)_{\neq}$  since it is up to a multiplicative constant  
\[
\esp\pp{\prodd \ell1r\wt L_{I_\ell,t_\ell}} \le \frac1r \summ \ell1r \esp \wt L_{I_\ell,t_\ell}^r,
\]
where $\wt L_{I,t}$ is defined similarly as $L_{I,t}$, with the underlying $\beta$-stable regenerative sets replaced by $(\beta-\eta)$-stable regenerative sets (see \eqref{eq:LIt}). Setting $\eta>0$ small enough so that $p(\beta-\eta)-p+1\in(0,1)$, the finiteness of the integration now follows from  \eqref{eq:4.5}.

We now prove \eqref{eq:pointwise} and \eqref{eq:upper_bound}.  Assume        $q\ge 2$ below. The case $q=1$ is similar and simpler and hence omitted.
To show \eqref{eq:pointwise}, 
it suffices to focus on  the tetrahedron $(\vv 0,\vv1)_\uparrow:=\{\vv x\in(0,1)^q: 0 < x_1 < \dots < x_q < 1\}$. First write
\begin{align*}
\prodd j1q f_j\circ T^{\floor{nx_j}} & = f_{1}\circ T^{\floor{nx_1}} \times \pp{    \prod_{j=2}^q f_{j} \circ T^{\lf nx_{j} \rf - \lf nx_{1} \rf}} \circ T^{\lf nx_{1} \rf}\\
& = f_{1}\circ T^{\floor{nx_1}} \times \pp{    \prod_{j=2}^q f_{j} \circ T^{\lf nx_{j} \rf - \lf nx_{2} \rf}}\circ T^{\floor{nx_2}-\floor{nx_1}}\circ T^{\lf nx_{1} \rf}.
\end{align*}
Then, by the measure-preserving property,
\equh\label{eq:H_q,n}
 H_{n,q}(\vv x)  = w_n^{q-1} \int_E f_{1}\times \pp{    \prod_{j=2}^q f_{j} \circ T^{\lf nx_{j} \rf - \lf nx_{2} \rf}}\circ T^{\lf nx_{2} \rf - \lf nx_{1} \rf} d\mu,
\eque
which, by duality \eqref{eq:dual}, equals
\[
w_n^{q-2} \frac{w_n}{w_{\lf nx_{2} \rf - \lf nx_{1} \rf}}\, \int_A w_{\lf nx_{2} \rf - \lf nx_{1} \rf} \left(\That^{\lf nx_{2} \rf - \lf nx_{1} \rf} f_{1}\right) \prod_{j=2}^q f_{j}\circ T^{\lf nx_{j} \rf - \lf nx_{2} \rf} d\mu.
\]
Due to the uniform convergence of a regularly varying sequence of positive index  \citep[Proposition 2.4]{resnick07heavy}, we have 
$
\limn w_{\lf nx_{2} \rf - \lf nx_{1} \rf}/w_n = (x_{2} - x_{1})^{1-\beta}$.
In addition, using the uniform convergence in (\ref{eq:uniform ret}) and the relation (\ref{eq:b_n w_n}), 
as $n\to\infty$, 
\[
 H_{n,q}(\vv x) 
 \sim\frac{ \mu(f_{1})}{\Gammabeta } (x_{2} - x_{1})^{\beta-1} w_n^{q-2} \int_E  \prod_{j=2}^q f_{j} \circ T^{ \lf nx_{j} \rf - \lf nx_{2} \rf}d\mu. 
\]
Repeating the arguments above  yields \eqref{eq:pointwise}.

We now prove \eqref{eq:upper_bound}.  
The situation is more delicate, and we shall introduce 
\[
  D_{n,q}:=\ccbb{\vv x \in (\vv0,\vv1)_{\uparrow}:\lf n x_i \rf \neq \lf nx_j \rf \text{ for all }i\neq j }.
\]

First assume  that $\vv x\in D_{n,q}$, which implies $\lf n x_{1} \rf < \lf n x_{2} \rf$.   By the Potter's bound \citep[Theorem 1.5.6]{bingham87regular} and an elementary bound \citep[Eq.(40)]{bai14generalized},
\begin{equation}  \label{e:bound1 lem}
\frac{w_n}{w_{\lf nx_{2} \rf - \lf nx_{1} \rf}}   \leq  C_1 \left(\frac{\lf n x_{2} \rf - \lf n x_{1} \rf}{n} \right)^{\beta-1-\eta}\le C_2 (x_{2} - x_{1})^{\beta-1-\eta},  
\end{equation}
for all $n\in\N,\vv x\in D_{n,q}$,
where recall that $\eta>0$ is sufficiently small such that $\beta-\eta > 1-1/p$. 
In addition, the  relations (\ref{eq:uniform ret}) and (\ref{eq:b_n w_n}) imply
\begin{equation}  \label{e:bound2 lem}
\sup_{\substack{0 < x_1<x_2 < 1, y\in A\\ n: \lf n x_{1} \rf<\lf n x_{2}\rf}} w_{\lf nx_{2} \rf - \lf nx_{1} \rf} \left(\That^{\lf nx_{2} \rf - \lf nx_{1} \rf} 1_A\right) (y) <\infty. 
\end{equation}
Applying these observations to  (\ref{eq:H_q,n}), and bounding  $|f_j|$'s   by $1_A$ up to a constant almost everywhere, we get
\[
 H_{n,q}(\vv x)   \leq  C (x_{2}-x_{1})^{\beta-1-\eta} w_n^{q-2}
\ \int_E  1_A 
\prodd j3q
1_A\circ T^{\lf nx_{j} \rf - \lf nx_{2} \rf} d\mu.
\]
Applying the bounds of the form \eqref{e:bound1 lem} and \eqref{e:bound2 lem} iteratively, we eventually get 
\eqref{eq:upper_bound} for $\vv x\in D_{n,q}$.

Now we   assume  that $\vv x\in (\vv0,\vv1)_\uparrow\setminus D_{n,q}$.
Again in (\ref{eq:H_q,n}), we shall bound each  $|f_{j}|$ by $1_A$ up to a constant almost everywhere.  Assume first that only two of $\floor{nx_i}$s
 are the same, and without loss of generality we consider    $\lf nx_{2} \rf = \lf  n x_{1} \rf$ and $\lf n x_{j}\rf \neq \lf n x_{j-1} \rf $ for $j=3,\dots,q$.    Then
\begin{align*}
 H_{n,q}(\vv x) &\le C   w_n^{q-1} \int_E  \prod_{j=2}^q  1_A \circ T^{\lf nx_{j} \rf} d\mu \\
  &
  = C w_n  \cdot w_n^{q-2} \int_E 1_{A} \prod_{j=3}^q 1_{A} \circ T^{\lf nx_{j} \rf - \lf nx_{2} \rf} d\mu. 
\end{align*}
Handling the integral factor as  in \eqref{e:bound1 lem} and \eqref{e:bound2 lem}, we obtain 
\begin{equation}\label{eq:H_nq bound}
H_{n,q}(\vv x)\leq C w_n \prod_{j=3}^q (x_{j} - x_{{j-1}})^{\beta-1-\eta}\end{equation} 
Furthermore, since $\lf nx_{2} \rf = \lf  n x_{1}\rf $ implies $x_{2} - x_{1} < 1/n$, under which 
$ n^{\beta-1-\eta} (x_{2} - x_{1})^{\beta-1-\eta}>1$. Inserting this into \eqref{eq:H_nq bound}, 
it then follows that 
\[
H_{n,q}(\vv x) \leq C w_n n^{\beta-1-\eta} \hqbetaeta (\vv x).
\] 
Note that $w_n n^{\beta-1-\eta}\in \RV_\infty(-\eta)$ and thus converges to zero as $n\rightarrow\infty$. So the above satisfies  what we need in \eqref{eq:upper_bound}. The case where $\vv x\in (\vv0,\vv1)_{\uparrow}\setminus D_{n,q}$ with  $\floor{nx_i} = \floor{nx_{i+1}}$ more than one value of $i=1,\dots,q-1$ can be treated similarly. The proof is thus completed.
\end{proof}

\begin{proof}[Proof of Theorem \ref{Thm:local time}]
  We have computed the  joint moments of $(L_{I_\ell,t_\ell})_{\ell=1,\dots,r}$   in Theorem \ref{thm:1}. On the other hand, we have established the convergence of the joint   moments of $(L_{n,I_\ell,t_\ell})_{\ell=1,\dots,r}$ in Proposition \ref{Pro:moments}. It remains to show that the law of $(L_{I_\ell,t_\ell})_{\ell=1,\dots,r}$ is uniquely determined by the joint moments, for every choice of $I_1,\dots,I_r, t_1,\dots,t_r$.   
Then, it suffices to check the multivariate Carleman condition \citep[Theorem 1.12]{shohat43problem}
\begin{equation}\label{eq:carleman}
\sum_{k=1}^\infty  \eta_{2k}^{-1/(2k)}=\infty,
\qmwith 
\eta_{2k} := \summ \ell1r \esp L_{I_\ell,t_\ell}^{2k}.
\end{equation}
In view of Corollary \ref{Cor:incre moment}, we have
$\eta_{2k} \le   C^{2k} (2k)!/\Gamma(2k \beta_p -\beta_p
+2)$.
By the Stirling's approximation,  one  can obtain the inequality 
$\eta_{2k}^{-1/(2k)}\ge  C k^{\beta_p-1}$. 
So (\ref{eq:carleman}) holds  because $\beta_p>0$. 
\end{proof}
\subsubsection*{Proof of \eqref{eq:Rn}}
We shall need the following uniform control:
\begin{equation}\label{eq:Gn bound}
G_n(y)\equiv\frac{\rho^{\leftarrow}(y/w_n)}{\rho^{\leftarrow}(1/w_n)}\le  C \pp{ y^{-1/\alpha_0} +  y^{-(1/\alpha)-\epsilon}},~  \mfa y>0  \text{ and } n\in\N.
\end{equation}
To see this, we first note that the assumptions on $\rho$ in \eqref{eq:rho} imply that $\rho^\leftarrow\in\RV_0(-1/\alpha)$ and $\rho^\leftarrow(y) = O(y^{-1/\alpha_0})$ as $y\to\infty$. By Potter's bound \citep[Theorem 1.5.6]{bingham87regular}, for every $\epsilon>0$ there exists a constant $A_\epsilon>0$ such that If $y\le A_\epsilon w_n$, $G_n(y)\le 2y^{-(1/\alpha) - \epsilon}$. 
On the other hand, for $y>A_\epsilon w_n$, we have $\rho^\leftarrow(y/w_n)\le C(y/w_n)^{-1/\alpha_0}$  and $\rho^\leftarrow(1/w_n)\ge C(1/w_n)^{-(1/\alpha)+\epsilon}$, whence  we have
\[
G_n(y)\le C y^{-1/\alpha_0}w_n^{1/\alpha_0-(1/\alpha)+\epsilon}, \mfa y>A_\epsilon w_n, n\in\N.
\]
(The constants $C$ here and below depend on $\epsilon$.)
Now, note that for the second assumption on $\alpha_0$ in \eqref{eq:rho}, one could take $\alpha_0$ arbitrarily close to and smaller than 2. Set also $\epsilon$ small so that $1/\alpha_0-(1/\alpha)+\epsilon<0$, so that the upper bound above 
becomes
 $G_n(y)\le Cy^{-1/\alpha_0}$ for all $y>A_\epsilon w_n$. We have thus proved \eqref{eq:Gn bound}. 

Fix a large $M$ which will be specified later. In view of \eqref{eq:SR} and \eqref{eq:Snm0}, we 
express
\[
R_{n,m}(t)=\sum_{I_1\in  \cl{D}_{\le  p-1}(M)}   \left( \prod_{i\in I_1} \varepsilon_i G_{n}(\Gamma_i)\right) F(I_1,n,M,m)
\]
 where $\cl{D}_{\le p-1}(M)$ is as in (\ref{eq:D le p M}), and  
\[
F(I_1,n,M,m) := \sum_{I_2\in \cl{H}(p-|I_1|,M,m)}\left(\prod_{i\in I_2} \varepsilon_i G_{n}(\Gamma_i)\right)  L_{n,I_1\cup I_2,t},
\]
with
\[
\cl{H}(k,M,m):=\{I\in \cl{D}_k: \ \min I> M,\ \max I>m\}.
\]
(Compare it with $\calH(k,M)$ in \eqref{eq:H(k,M)}.)
Observe that $\cl{D}_{\le p-1}(M)$ is finite and  $\E |\prod_{i\in I_1}G_{n}(\Gamma_i)|^q<\infty$ for all $I_1 \in \cl{D}_{\le p-1}(M)$ when $q>0$ is sufficiently small  in view of \eqref{eq:Gn bound}   and \eqref{eq:gamma estimate}. Hence  by 
 H\"older's inequality, it 
 suffices
  to show for each $I_1\in \cl{D}_{\le p-1}(M)$,    
\begin{equation}\label{eq:I_1 term q norm b}
\limm \sup_{n\in\N} \E F(I_1,n,M,m)^2=0.
\end{equation}
For the above to hold we shall actually need $M$ to be large enough, which will be determined at the end. 
Introduce
\[
k:= p-|I_1|.
\] 
We start by using the orthogonality  $\E[(\prod_{i\in I}\varepsilon_i)( \prod_{i\in I'} \varepsilon_i)] =1_{\{I=I'\}}$, $I,I'\in 
\calD_k$
 to obtain
\[
\E F(I_1,n,M,m)^2=\sum_{I_2\in \cl{H}(k,M,m)} \E\left( \prod_{i\in I_2}  G_{n}(\Gamma_i)^2\right)  \E L_{n,I_1\cup I_2,t}^2.
\]
Note that $\esp L_{n,I_1\cup I_2,t}^2 = \esp L_{n,I,t}^2$ for all $I\in \calD_p$, which is convergent as $n\to\infty$ by Proposition \ref{Pro:moments}  and hence uniformly bounded in $I$ and $n$. Note also that $\cl{H}(k,M,m)\downarrow \emptyset$ as $m\rightarrow\infty$.
Therefore, to show \eqref{eq:I_1 term q norm b}, by the dominated convergence theorem it suffices to find $g^*: \calH(k,M)\to\R_+$ such that
\[
g_n^*(I_2):=\esp\pp{\prod_{i\in I_2}G_n(\Gamma_i)^2}\le g^*(I_2), \mfa I_2\in \calH(k,M), n\in\N
\]
and $\sum_{I_2\in\calH(
k
,M)} g^*(I_2)<\infty$.
Setting $\gamma:=\min\{1/\alpha_0,
1/\alpha
+\epsilon\}$ 
and taking $M>2\gamma k$,
 we have
\equh
\esp\pp{\prod_{i\in I_2}G_n(\Gamma_i)^2}
  \le C\esp \pp{\prod_{i\in I_2}\pp{\Gamma_i^{-1/\alpha_0}+\Gamma_i^{-(1/\alpha)-\epsilon}}^2}
\le C{\prod_{i\in I_2} i^{-2\gamma}}=:g^*(I_2),
\label{eq:Gn_Gamma}
\eque
where the first inequality follows from \eqref{eq:Gn bound}, and the second from  \eqref{eq:gamma estimate}. 
 The bound $g^*$ is summable over $\calH(k,M)$ as
\[
\sum_{I_2\in\calH(k,M)}g^*(I_2)\le C\pp{\sum_{i=1}^\infty i^{-2\gamma}}^{k},
\]
and that $2\gamma>1$.
This completes the proof of \eqref{eq:I_1 term q norm b} and hence \eqref{eq:Rn}.
\subsection{Proof of tightness}\label{sec:pf tight}
\begin{Pro}
Under the assumptions of Theorem \ref{Thm:CLT}, the laws of processes $(S_n(t))_{t\in[0,1]}, n\in\N$ are tight in the Skorokhod space $D([0,1])$ with respect to the uniform topology. 
\end{Pro}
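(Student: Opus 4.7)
The plan is to establish tightness in $D([0,1])$ equipped with the Skorokhod $J_1$ topology. Since the limit $\Zab$ has a.s.\ continuous sample paths by the proposition preceding this section, $J_1$-tightness combined with f.d.d.\ convergence to a continuous process will automatically upgrade the mode of convergence to the uniform metric, which is what the statement of Theorem \ref{Thm:CLT} demands. Throughout I would reuse the decomposition $S_n\stackrel{d}{=}S_{n,m}+R_{n,m}$ built in Section \ref{sec:pf fdd}, and control each summand separately.

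For the main term $S_{n,m}$, the sum defining it runs over the \emph{finite} index set $\calD_p(m)$, so $J_1$-tightness reduces to that of each building block $L_{n,I,\cdot}$, $I\in\calD_p(m)$. I would adapt the moment formula of Proposition \ref{Pro:moments} to increments, mimicking how Corollary \ref{Cor:incre moment} treats the limiting case: choosing $r\in\N$ and $\eta>0$ small so that $(2r-1)(\beta_p-\eta)+1>1$, the same dominated convergence argument built on \eqref{eq:pointwise} and \eqref{eq:upper_bound} yields a bound of the form
\[
\E\bigl(L_{n,I,t}-L_{n,I,s}\bigr)^{2r}\le C_{I,r}\,(t-s)^{1+\delta},\qquad 0\le s<t\le1,
\]
uniformly in $n$. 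Billingsley's Kolmogorov-type criterion (Theorem 13.5 of \textit{Convergence of Probability Measures}) then delivers tightness of $(L_{n,I,\cdot})_n$ in $D([0,1])$, and hence of $S_{n,m}$ for each fixed $m$, via the uniform bound on the Rademacher coefficients for $I\in\calD_p(m)$.

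For the remainder I need the uniform negligibility
\[
\lim_{m\to\infty}\limsup_{n\to\infty}\P\Bigl(\sup_{t\in[0,1]}|R_{n,m}(t)|>\epsilon\Bigr)=0.
\]
I would follow the splitting in \eqref{eq:I_1 term q norm b}, writing $R_{n,m}$ as a finite linear combination indexed by a low part $I_1\in\calD_{\le p-1}(M)$ (with $M$ chosen large enough that the moments $\E\prod_{i\in I_1}G_n(\Gamma_i)^2$ are finite uniformly in $n$ via \eqref{eq:Gn_Gamma}) of a Rademacher series $F(I_1,n,M,m,t)$ in the high part $I_2$. After writing $f=f^+-f^-$ so that $t\mapsto L_{n,I,t}$ is monotone componentwise, the supremum of $F(I_1,n,M,m,\cdot)^2$ is pathwise dominated by its terminal value up to constants, and Khinchine's inequality for Rademacher series gives $\E\sup_t F(I_1,n,M,m,t)^2\le C\sum_{I_2\in\calH(p-|I_1|,M,m)}\E\prod_{i\in I_2}G_n(\Gamma_i)^2\cdot\E L_{n,I_1\cup I_2,1}^2$; the latter sum vanishes as $m\to\infty$ by the summability argument already used in the proof of \eqref{eq:Rn}.

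Combining the two steps in the standard way (e.g.\ Theorem 3.2 in Billingsley) yields tightness of $\{S_n\}$ in $D([0,1])$, concluding the proof in view of the remark on continuity of $\Zab$. The main obstacle I anticipate is Step~2: because $R_{n,m}$ is neither a partial sum of independent increments nor a martingale in $t$, the maximal inequality cannot be quoted off the shelf, and a careful separation of the three sources of randomness ($\varepsilon_i,\Gamma_i,U_i^{(n)}$) together with the $f=f^+-f^-$ monotonicity reduction is essential to reduce $\sup_t$ to the terminal time $t=1$.
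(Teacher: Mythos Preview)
Your treatment of $S_{n,m}$ is sound (and essentially what the paper does for the \emph{remainder}, see below), but Step~2 for $R_{n,m}$ contains a real gap. The claim that ``the supremum of $F(I_1,n,M,m,\cdot)^2$ is pathwise dominated by its terminal value up to constants'' is false: even when each $L_{n,I_1\cup I_2,t}$ is non-decreasing in $t$, the Rademacher signs $\prod_{i\in I_2}\varepsilon_i$ produce cancellations, so $F(\cdot)$ is not monotone and $\sup_t F(t)^2$ can exceed any multiple of $F(1)^2$ (take two terms with opposite signs whose coefficients cross). The only pathwise bound available is the $\ell^1$ estimate $\sup_t|F(t)|\le\sum_{I_2}\bigl(\prod_{i\in I_2}G_n(\Gamma_i)\bigr)L_{n,I_1\cup I_2,1}$, but squaring and taking expectation destroys the Rademacher orthogonality; the resulting double sum is controlled by $\bigl(\sum_{I_2}(\E\prod G_n(\Gamma_i)^2)^{1/2}\bigr)^2\le C\bigl(\sum_{I_2}\prod_{i\in I_2}i^{-\gamma}\bigr)^2$ with $\gamma=\min\{1/\alpha_0,1/\alpha+\epsilon\}$, and since one only knows $\gamma>1/2$ (not $\gamma>1$) this can diverge when $\alpha>1$. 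Khinchine's inequality does not help here: it applies termwise in $t$, not to the supremum.

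The paper sidesteps this entirely by \emph{not} trying to make $R_{n,m}$ uniformly negligible. Instead it fixes a single $m$ large enough and proves tightness of $(R_{n,m})_n$ directly via the same Kolmogorov criterion you proposed for $S_{n,m}$: writing $R_{n,m}$ as a finite sum over $I_1\in\calD_{\le p-1}(m)$ of $\wt G_{n,I_1}A_n(t)$ with $A_n(t)=\sum_{I_2\in\calH(p-|I_1|,m)}\wt G_{n,I_2}L_{n,I_1\cup I_2,t}$, Rademacher orthogonality gives
\[
\E\bigl(A_n(t)-A_n(s)\bigr)^2=\sum_{I_2}\E\Bigl(\prod_{i\in I_2}G_n(\Gamma_i)^2\Bigr)\E\bigl(L_{n,I_1\cup I_2,t}-L_{n,I_1\cup I_2,s}\bigr)^2,
\]
and the increment bound from \eqref{eq:upper_bound} yields $\E(L_{n,I,t}-L_{n,I,s})^2\le C(t-s)^{\beta_p+1-p\eta}$ with exponent $>1$, uniformly in $I$ and $n$. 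Summability over $I_2$ then comes from \eqref{eq:Gn_Gamma} exactly as in your f.d.d.\ argument. In short: apply your Step~1 machinery to $R_{n,m}$ as well, rather than attempting a maximal inequality.
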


\begin{proof}
Fix   $m\in \N$  large enough 
specified later. 
Assume without   loss of generality that $f\ge 0$, since a general $f$ can be written as a difference of two non-negative bounded $\mu^{\otimes p}$-a.e.~continuous functions on $A^p$. 
 Recall the decomposition $S_n(t)=S_{n,m}(t)+R_{n,m}(t)$ as in (\ref{eq:SR}).
It suffices to check the tightness of $(S_{n,m})_{n\in\N}$ and $(R_{n,m})_{n\in\N}$ respectively. We start with $(S_{n,m})_{n\in\N}$. 
Let $L_{n,I,t}$ be as in  (\ref{eq:L_t(I,n)}). Recall that 
\[
S_{n,m}(t) = p!\sum_{ I \in \cl{D}_p(m)}\pp{\prod_{i\in I}\varepsilon_iG_n(\Gamma_i)}L_{n,I,t}.
\]
By Theorem \ref{Thm:local time},  the limit of each $L_{n,I,t}$ in finite-dimensional distribution is, up to a constant, the local time $L_t(\cap_{i\in I}(R_i+V_i))$ of the shifted $\beta_p$-stable regenerative set $\cap_{i\in I}(R_i+V_i)$, 
 for which we shall work with its continuous version.  Then for each fixed $I\in \cl{D}_p(m)$,  the laws  of the a.s.\ non-decreasing processes  $(L_{n,I,t})_{t\in[0,1]}, n\in\N$ are tight \citep[Theorem 3]{bingham71limit}.     Furthermore, we have seen that $\prod_{i\in I}G_{n}(\Gamma_i)\to \prod_{i\in I}\Gamma_i^{-1/ \alpha}$ as $n\rightarrow\infty$, and hence \[
\wt G_{n,I}:=\prod_{i\in I}\varepsilon_iG_n(\Gamma_i), n\in\N
\] is a tight sequence of random variables for every $I\in\calD_p(m)$. For every fixed $m\in\N$, the tightness of $\{(S_{n,m}(t))_{t\in[0,1]}, n\in\N\}$ then follows.

Next, we show the tightness of $(R_{n,m}(t))_{t\in[0,1]}, n\in\N$ for $m$ fixed large enough. 
Write
\[
R_{n,m}(t)
=\sum_{I_1\in  \cl{D}_{\le p-1}(m)}  \wt G_{n,I_1}
{\sum_{I_2\in \cl{H}(p-|I_1|,m)}
\wt G_{n,I_2}
   L_{n,I_1\cup I_2,t}},
\]
Since $\cl{D}_{\le p-1}(m)$ is finite, it suffices to 
prove, for fixed $I_1\in \cl{D}_{\le p-1}(m)$ and $k=p-|I_1|\ge 1$, the tightness of   
\[
A_n(t):=\sum_{I_2\in \cl{H}(k,m)}\wt G_{n,I_2}  L_{n,I_1\cup I_2,t}, t\in[0,1], n\in\N.
\]
For this purpose, it is standard (e.g.~\citep[Theorem 13.5]{billingsley99convergence}) to show that for all $0\le s<t\le 1$, there exist  constants $C>0$, $a>0$ and $b>1$, such that
\begin{equation}\label{eq:goal tight}
\E |A_n(t)-A_n(s)|^{a} \le C \left( t-s \right)^{b}, \mfa 0\le s<t\le 1, n\in\N.
\end{equation}
For this purpose, we compute
\begin{align*}
  \E (A_n(t)-A_n(s))^{2}  
=   \sum_{I_2 \in \cl{H}(k,m)}  \E \pp{\prod_{i\in I_2}G_n(\Gamma_i)^{2}}  \E 
(L_{n,I_1\cup I_2,t}-L_{n,I_1\cup I_2,s})^2.
\end{align*} 
The first expectation is uniformly bounded by $g^*(I_2)$  as in \eqref{eq:Gn_Gamma} (assuming $m>2\gamma k$ in place of $M>2\gamma k$), which is summable over $\calH(k,m)$. For the second, by first bounding $f$ by $1_{A^p}$ up to a constant and then  applying an argument similar to the proof of Proposition \ref{Pro:moments},   in particular,  using the bound \eqref{eq:upper_bound}, we have   
\begin{align*}
\E  
(L_{n,I_1\cup I_2,t}-L_{n,I_1\cup I_2,s})^2 &\le C  \int_{ \frac{\lf ns \rf}{n}<x_1<x_2< \frac{\lf nt \rf}{n} } (x_2-x_1)^{p(\beta-1-\eta)} dx_1 dx_2   \\
&
\le C \left(s-t\right)^{\beta_p +1 -p\eta},
\end{align*}
where $\eta>0$ is chosen sufficiently small so that $p\eta<\beta_p\in (0,1)$.
The proof of (\ref{eq:goal tight}) is then completed.
\end{proof}

\section*{Acknowledgements}
We would like to thank Gennady Samorodnitsky for helpful discussions.

TO's research was partially supported by the National Science Foundation (NSF) grant, Probability and Topology \#1811428.
YW's research was partially supported by Army Research Laboratory grant W911NF-17-1-0006 and National Security Agency (NSA) grant H98230-16-1-0322.



\end{document}